\newcounter{problem}
\newtheorem{theorem}{Theorem}
\newtheorem{corollary}{Corollary}
\newtheorem{lemma}{Lemma}
\newtheorem{proposition}{Proposition}
\newtheorem{definition}{Definition}
\newtheorem{assumption}{Assumption}
\newtheorem{remark}{Remark}
\newcommand{\real}{\mathbb{R}}
\newcommand{\argmin}{\mathop{\rm argmin}}
\newcommand{\setdef}[2]{\left\{#1 \; | \; #2\right\}}
\newcommand{\NN}{\mathbb{N}}
\newcommand{\mc}{\mathcal}
\newcommand{\onebf}{\mathbf{1}}
\newcommand{\until}[1]{\{1,\dots,#1\}}
\newcommand{\map}[3]{#1: #2 \rightarrow #3}
\newcommand{\myk}{q}
\newcommand{\mykk}{r}
\algnewcommand{\algorithmicgoto}{\textbf{go to}}%
\algnewcommand{\Goto}[1]{\algorithmicgoto~\ref{#1}}%
\title{Queue Length Simulation for Signalized Arterial Networks \\ and Steady State Computation under Fixed Time Control}
\author{Pouyan Hosseini\thanks{The authors are with the Sonny Astani Department of Civil and Environmental Engineering at the University of Southern California, Los Angeles, CA. \texttt{\{seyedpoh,ksavla\}@usc.edu}. This work was supported in part by USDOT \# DTRT 13-G-UTC57 and NSF CAREER ECCS \# 1454729.} \qquad Ketan Savla}
\date{\today}
\begin{document}
\maketitle

\begin{abstract}
We consider traffic flow dynamics for a network of signalized intersections, where the 
outflow from every link is constrained to be equal to a given capacity function if the queue length is positive, and equal to the minimum of cumulative inflow and capacity function otherwise. In spite of the resulting dynamics being discontinuous, recent work has proved existence and uniqueness of the resulting queue length trajectory if the inter-link travel times are strictly bounded away from zero. The proof, which also suggests a constructive procedure, relies on showing desired properties on contiguous time intervals of length equal to the minimum among all link travel times. We provide an alternate framework to obtain queue length trajectories by direct simulation of delay differential equations, where link outflows are obtained from the provably unique solution to a linear program. Existence and uniqueness of the solution to the proposed model for traffic flow dynamics is established for piecewise constant external inflow and capacity functions, and the proposed method does not require travel times to be bounded away from zero. Additionally, if the external inflow and capacity functions are periodic and satisfy a stability condition, then there exists a globally attractive periodic orbit. We provide an iterative procedure to compute this periodic orbit. A periodic trajectory is iteratively updated for every link based on updates to a specific time instant when its queue length transitions from being zero to being positive. The update for a given link is based on the periodic trajectories computed in the previous iteration for its upstream links. The resulting iterates are shown to converge uniformly monotonically to the desired periodic orbit. 
\end{abstract}

\section{Introduction}
Modeling of traffic flow dynamics for signalized arterial networks has to strike a tradeoff between the ability to capture variations induced by alternating red/green phases and computational complexity of the resulting framework for the purpose of performance evaluation and control synthesis. 
Store-and-forward models, e.g., see \cite{Papageorgiou:03}, approximate the dynamics by replacing a time-varying outflow due to alternating green and red phase on a link with an \emph{equivalent} average outflow. Such models have been used for optimal green time split control, e.g., see \cite{Dans.Gazis:76,Aboudolas.et.al:09}. Continuous-time versions of these models have also been used for green time control, e.g., in \cite{Savla.Lovisari.ea.Allerton13}. 
However, the approximation does not model the effect of offsets and cycle lengths. These limitations are overcome by discrete-event models, which have been utilized for optimal control synthesis for isolated signalized intersections in some cases, e.g., see \cite{DenBoom.DeSchutter:06,Haddad.DeSchutter.ea:TAC10}.  


\cite{Muralidharan.Pedarsani.ea:15} proposed and analyzed a model, which captures offset and cycle times in the same spirit as discrete-event models. In particular, in \cite{Muralidharan.Pedarsani.ea:15}, a fixed-time control setting is considered, where every link is endowed with a given capacity function, that specifies the maximum possible outflow from a link as a function of time. In order to maintain non-negativity of queue lengths, the outflow from every link is constrained to be equal to the capacity function if the queue length is positive, and equal to the minimum of cumulative inflow and capacity function otherwise. In spite of the resulting dynamics being discontinuous, it was shown in \cite{Muralidharan.Pedarsani.ea:15} that the traffic dynamics admits a unique queue length trajectory if the inter-link travel times are strictly bounded away from zero. The proof, which also suggests a constructive procedure, relies on showing desired properties on contiguous time intervals of length equal to the minimum among all inter-link travel times. 

We provide an alternate framework to obtain queue length trajectories by direct simulation of delay differential equations, where link outflows are obtained from the provably unique solution to a linear program. For given queue lengths, this linear program solves for maximum cumulative outflow from all links subject to constraints imposed by the link capacity functions, and subject to maintaining non-negativity of queue lengths. 
Existence and uniqueness of the solution to delay differential equations is established for piecewise constant external inflow and capacity functions, and the method does not require travel times to be bounded away from zero. The existence and uniqueness result also extends to adaptive control policies, as long as the resulting capacity functions remain piecewise constant. This would happen, e.g., if traffic signal control parameters (green time, cycle length, and offsets) at every intersection are updated once per cycle. The piecewise constant assumption is practically justified because a common model for a capacity function is that it is equal to the saturated capacity during the green phase and zero otherwise, and external inflows can be modeled as a sequence of rectangular pulses representing arriving vehicle platoons. 
The key idea in the proof is that, under constant inflow and capacity, the set of links with zero queue lengths is monotonically non-decreasing, which implies overall finite discontinuities over any given time interval under the piecewise constant assumption. The ability to model zero inter-link travel time is particularly desirable for possible extensions to model finite queue capacity, under which inter-link travel time approaches zero as the downstream queue approaches capacity. 

If, additionally, the external inflow and capacity functions are periodic and satisfy a stability condition, then there exists a globally attractive periodic orbit. This result and its proof follows the same structure as in \cite{Muralidharan.Pedarsani.ea:15}, but is adapted to the proposed modeling framework. 
One consequence of this adaptation is that we work with the $\ell_1$ norm, instead of the sup norm in \cite{Muralidharan.Pedarsani.ea:15}, for continuity arguments in our proofs. 

Our most novel contribution is a procedure to explicitly calculate the globally attractive periodic orbit. Indeed, this was noted as an important ``outstanding open problem" in \cite{Muralidharan.Pedarsani.ea:15}, due to its usefulness in directly quantifying relevant performance metrics for a given fixed-time control. 
We provide an iterative procedure to compute this periodic orbit. A periodic trajectory is iteratively updated for every link based on updates to a specific time instant when its queue length transitions from being zero to being positive. This update for a given link is based on the periodic trajectories computed in the previous iteration for upstream links. The resulting iterates are shown to converge uniformly monotonically to the desired periodic orbit.

The representation of periodic orbit in terms of the time instants when queue length transitions between being positive and zero, as is implicit in our computational procedure, is to be contrasted with sinusoidal approximation consisting of a single harmonic proposed, e.g., in \cite{Coogan.Kim.ea:17}. While it is compelling to improve this approximation by including higher harmonics~\cite{Gartner.Deshpande:09}, such an approach can potentially face several challenges: computing Fourier coefficients is not easy due to discontinuous dynamics; no bounds exist on approximation error for a given number of harmonics; and most importantly, because of discontinuity, including arbitrarily high number of harmonics may not give a zero approximation error due to the well-known Gibbs phenomenon. On the other hand, our proposed procedure computes the periodic orbit with arbitrary accuracy.   
 
In summary, the key contributions of the paper are as follows. First, we provide a delay differential equation framework to directly simulate queue length dynamics under 
fixed-time or adaptive control, by establishing that it has a unique solution as long as the external inflow and capacity functions are piecewise constant. Second, under additional periodicity and stability condition, we adapt a recently proposed technique to establish existence of a globally attractive periodic orbit in our setting. Third, we provide a procedure to compute this periodic orbit with arbitrary accuracy. Illustrative simulations, including comparison with steady-state queue lengths from a microscopic traffic simulator, are also included.

The outline of the paper is as follows. Section~\ref{sec:problem-formulation} contains the proposed delay differential equation framework to simulate queue length dynamics. Section~\ref{sec:steady-state-single-link} provides the (non-iterative) framework to compute the periodic orbit for an isolated link. This forms the basis for an iterative procedure to compute periodic orbits for a network in Section~\ref{sec:steady-state-network} where we also establish uniform monotonic convergence of the iterates to the desired periodic orbit. Section~\ref{sec:simulations} presents illustrative simulation results and concluding remarks are presented in Section~\ref{sec:conclusions}. The proofs for most of the technical results are collected in the Appendix. 

We conclude this section by introducing key concepts and notations to be used throughout the paper. $\real$, $\real_{\geq 0}$, $\real_{>0}$, $\real_{\leq 0}$ and $\real_{< 0}$ will stand for real, non-negative real, strictly positive real, non-positive real, and strictly negative real, respectively, and $\NN$ denotes the set of natural numbers. 
For $x \in \real$, we let $[x]^+=\max \{x,0\}$ denote the non-negative part of $x$. 
A function $\map{f}{X \subsetneq \real}{\real^n}$ is called \emph{piece-wise constant} if it has only finitely many pieces, i.e., $X$ can be partitioned into a finite number of contiguous right-open sets over each of which $f$ is constant.
The road network topology is described by a directed multi-graph $\mc G=\left(\mc V,\mc E\right)$ with no self-loops, where $\mc V$ is the set of intersections and $\mc E$ is the set of directed links. 

\section{Problem Formulation}
\label{sec:problem-formulation}
\subsection{Traffic Flow Dynamics}
The network state at time $t$ is described by the vector of queue lengths, $x(t) \in \real_+^{\mc E}$ corresponding to the number of stationary vehicles, and the history of relevant past departures from the links, $\beta(t)$, which quantifies the number of vehicles traveling in between links. The quantity $\beta(t)$ shall be described formally soon.
Let $\map{c_i}{\real_{\geq 0}}{\real_{\geq 0}}$ and $\map{\lambda_i}{\real_{\geq 0}}{\real_{\geq 0}}$ be saturated flow capacity and external inflow functions, respectively, for link $i \in \mc E$. 
Let the matrix $R \in \real_{\geq 0}^{\mc E \times \mc E}$ denote the routing of flow, e.g., $R_{ji}$ denotes the fraction of flow departing link $j$ that gets routed to link $i$. Naturally $R_{ji}=0$ if link $i$ is not immediately downstream to link $j$. We shall assume that $R$ is sub-stochastic, i.e., all of its entries are non-negative, all the row sums are upper bounded by 1, and there is at least one row 
whose row sum is strictly less than one. We further assume the following on the connectivity of $\mc G$.

\begin{assumption}
\label{ass:connectivity}
\begin{enumerate}
\item[(i)] $\mc G$ is weakly connected, i.e., for every $i, j \in \mc E$, there exists a directed path in $\mc E$ from $i$ to $j$, or from $j$ to $i$.
\item[(ii)] For every $i \in \mc E$, either the sum of entries of the $i$-th row in $R$ is strictly less than one, or there exists a directed path from $i$ to at least one link $j$ such that the entries of the $j$-th row in $R$ is strictly less than one.
\end{enumerate}
\end{assumption}

\begin{remark}
\label{ass:routing-matrix}
The weak connectivity aspect of 
Assumption~\ref{ass:connectivity} is without loss of generality: if $\mc G$ is not weakly connected, then our analysis applies to each connected component of $\mc G$, as long as each of these connected components satisfies (ii) in Assumption~\ref{ass:connectivity}. Indeed, part (ii) of Assumption~\ref{ass:connectivity} implies that, for every vehicle arriving into the network, either it is possible for the vehicle to depart directly from the arrival link, or there exists a directed path to an another link from which the vehicle can depart the network. 
Formally, part (ii) of Assumption~\ref{ass:connectivity} implies that 
the spectral radius of $R$, and hence also of $R^T$, is strictly less than one. In particular, this guarantees that $I-R^T$ is invertible. 
%
\end{remark}

%

We now describe a model for traffic flow dynamics. The queue length dynamics is described by a standard mass balance equation: for $t \geq 0$,
$$
\dot{x}_i(t) = \lambda_i(t) + \sum_{j \in \mc E} R_{ji} z_j\left(t-\delta_{ji}\right) - z_i(x(t),t), \qquad i \in \mc E
$$
where $z_i(x(t),t)$ denotes the outflow from link $i$ at time $t$. In \eqref{eq:flow-dynamics}, $\delta_{ji} \geq 0$ is the travel time from link $j$ to $i$, and $z_i(t-\delta_{ji})$ is  a concise notation for $z_i(x(t-\delta_{ji}),t-\delta_{ji})$. It would be convenient to rewrite the queue length dynamics as: for $t \geq 0$,
\begin{subequations}
\label{eq:flow-model-main}
\begin{equation}
\label{eq:flow-dynamics}
\dot{x}_i(t) = \tilde{\lambda}_i(t) + \sum_{j \in \mc E_i} R_{ji} z_j(x(t),t) - z_i(x(t),t), \qquad i \in \mc E
\end{equation}
\end{subequations}
\setcounter{equation}{1}
where 
\begin{equation}
 \label{eq:lambda-tilde-def}
 \tilde{\lambda}_i(t):=\lambda_i(t)+\sum_{j \in \mc E \setminus \mc E_i} R_{ji} z_j(t-\delta_{ji}), \quad i \in \mc E
\end{equation} 
is the net inflow to link $i$ due to external arrivals and arrivals due to vehicles from upstream which were traveling until $t$, and 
\begin{equation}
\label{eq:E-i-def-new}
\mc E_i:=\setdef{j \in \mc E}{R_{ji} > 0 \quad \& \quad \delta_{ji} = 0}
\end{equation}
is the set of links upstream of $i$ with zero inter-link travel time. Let $\bar{\delta}_j:=\max\{\delta_{ji}: i \in \mc E, \, R_{ji} > 0\}$ be the maximum among all travel times from link $j$ to its downstream links. 
We let 
\begin{equation}
\tag{1b}
\label{eq:beta-def}
\beta(t):=\{z_j(s): s \in [t-\bar{\delta}_j, t)\}_{j \in \mc E}
\end{equation}
be the history of relevant past departures\footnote{If $\bar{\delta}_j=0$ for some link $j$, then the departure history from such a link is not included in $\beta(t)$.}, and $\|\beta(t)\|_1:=\sum_{i \in \mc E} \sum_{j \in \mc E} R_{ji} \int_{t-\delta_{ji}}^t z_j(s) \, ds$ be the number of vehicles traveling in between links at time $t$.\footnote{It is easy to verify that this definition of $\|\beta(t)\|_1$ satisfies all the properties of a norm.} 
Finally, let $x(t):=\{x_i(t)\}_{i \in \mc E}$, $z(x(t),t) \equiv z(t) := \{z_i(x(t),t)\}_{i \in \mc E}$, $\lambda(t) := \{\lambda_i(t)\}_{i \in \mc E}$, and $c(t):=\{c_i(t)\}_{i \in \mc E}$ denote the collection of corresponding quantities over all links. \eqref{eq:flow-dynamics}-\eqref{eq:beta-def} collectively describe the evolution of $(x(t),\beta(t))$ starting from initial condition $(x(0),\beta(0))$. We propose link outflows $z(x(t),t)$ for $t \geq 0$ be obtained as solution to the following linear program, for any $\eta \in \real_{>0}^{\mc E}$:
\begin{equation}
\begin{aligned}
& \underset{z \in \real^{\mc E}}{\text{maximize}} \qquad \eta^{T} z \\
& \text{subject to} \qquad \qquad  z_i \leq c_i(t), \qquad i \in \mc E   \\
&  \qquad \qquad \qquad  z_i \leq \tilde{\lambda}_i(t) + \sum_{j \in \mc E_i} R_{ji} z_j, \qquad \text{if } i \in \mc I(x)  \\
\end{aligned}
\tag{1c}
\label{z-linprog}
\end{equation}
\setcounter{equation}{2}
where
$$
\mc I(x) :=\setdef{i \in \mc E}{x_i=0}
$$
is the set of links with no stationary vehicles. 
\eqref{z-linprog} computes the maximum cumulative outflow, weighted by $\eta$, in the network, subject to two constraints. The first one imposes link-wise capacity constraint, and the second one imposes the constraint that, for a link with zero queue length, its outflow is no greater than its inflow. The second constraint is to ensure non-negativity of queue lengths. The well-posedness of our proposed method for computing link outflows, i.e., uniqueness of solution to \eqref{z-linprog} for a given $\eta$, and independence w.r.t. $\eta$ is established in the next section. Thereafter, we establish existence and uniqueness of the solution to our traffic flow model in \eqref{eq:flow-dynamics}-\eqref{eq:beta-def}-\eqref{z-linprog}, which we shall collectively refer to as \eqref{eq:flow-model-main}.
 

%
%
In order to present our results on existence and uniqueness concisely, we introduce a couple of more notations. Let $\bar{\delta}:= \max_{(j,i) \in \mc E \times \mc E: \, R_{ji}>0} \delta_{ji}$ and $\underline{\delta}:= \min_{(j,i) \in \mc E \times \mc E: \, R_{ji}>0} \delta_{ji}$ be the, respectively, maximum and minimum among all inter-link travel times. 

\subsection{Existence of Solution to \eqref{eq:flow-dynamics}}
The proof of the next result is provided in the Appendix.


\begin{proposition}
\label{prop:solution-properties}
Given $(x(t),\beta(t))$, $\lambda(t)$, and $c(t)$, \eqref{z-linprog} has a unique solution, which is independent of $\eta \in \real_{>0}^{\mc E}$. Moreover, the optimal solution satisfies 
\begin{equation}
\label{eq:z-def}
z_i(x,t) = \begin{cases} 
c_i(t) &  i \in \mc E \setminus \mc I(x) \\
\min \left\{c_i(t), \tilde{\lambda}_i(t) + \sum_{j \in \mc E_i} R_{ji} z_j(x,t) \right\} & i \in \mc I(x)  
\end{cases}
\end{equation}
\end{proposition}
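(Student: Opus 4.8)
The plan is to recognize that, because $\eta\in\real_{>0}^{\mc E}$ and every variable carries the upper bound $z_i\le c_i(t)$, solving \eqref{z-linprog} amounts to finding the componentwise \emph{greatest} feasible point, and to show that such a point exists and is unique. Let $S$ denote the feasible set of \eqref{z-linprog}; it is a closed polyhedron, and it is bounded above by $c(t)$ since the first family of constraints forces $z_i\le c_i(t)$ for every $i$. Nonemptiness follows from the map $F$ with $F_i(z)=c_i(t)$ for $i\notin\mc I(x)$ and $F_i(z)=\min\{c_i(t),\,\tilde\lambda_i(t)+\sum_{j\in\mc E_i}R_{ji}z_j\}$ for $i\in\mc I(x)$: since the submatrix governing the coupling is entrywise dominated by $R^T$, its spectral radius is at most $\rho(R^T)<1$ (Remark~\ref{ass:routing-matrix}), so $F$ is a contraction and its fixed point lies in $S$. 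The structural fact driving everything is that $S$ is closed under the componentwise maximum: if $z,z'\in S$ and $w=z\vee z'$, then for each $i$ I take, without loss of generality, $w_i=z_i$, and use $R_{ji}\ge 0$ together with $z_j\le w_j$ to obtain $w_i=z_i\le\tilde\lambda_i(t)+\sum_{j\in\mc E_i}R_{ji}z_j\le\tilde\lambda_i(t)+\sum_{j\in\mc E_i}R_{ji}w_j$, while $w_i=\max\{z_i,z'_i\}\le c_i(t)$ handles the capacity constraint.

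Next I would extract the greatest element. Setting $s_i:=\sup_{z\in S}z_i\in(-\infty,c_i(t)]$, I approximate each coordinate by feasible points and take their finite join, which remains in $S$ by the lattice property; letting the approximation tighten and invoking closedness of $S$ shows $s\in S$, so $s$ is the greatest element, which I denote $z^\ast$. For any $\eta\in\real_{>0}^{\mc E}$ and any $z\in S$ one has $z\le z^\ast$ componentwise, hence $\eta^Tz\le\eta^Tz^\ast$; and if $z\ne z^\ast$ then $z_k<z^\ast_k$ for some $k$ while $z\le z^\ast$ elsewhere, so strict positivity of $\eta$ gives $\eta^Tz<\eta^Tz^\ast$. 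Thus $z^\ast$ is the unique optimizer, and since it was defined with no reference to $\eta$, uniqueness and $\eta$-independence follow simultaneously.

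Finally I would verify \eqref{eq:z-def} by a ``push-up'' argument exploiting maximality. For $i\notin\mc I(x)$ the only upper bound on $z_i$ is $c_i(t)$, and raising $z_i$ can only relax the downstream constraints in which it appears with coefficient $R_{ik}\ge 0$; were $z^\ast_i<c_i(t)$, replacing it by $c_i(t)$ would keep feasibility and strictly increase a coordinate, contradicting maximality, so $z^\ast_i=c_i(t)$. For $i\in\mc I(x)$ the relevant ceiling is $m_i:=\min\{c_i(t),\tilde\lambda_i(t)+\sum_{j\in\mc E_i}R_{ji}z^\ast_j\}$; because $\mc G$ has no self-loops, $i\notin\mc E_i$, so raising $z^\ast_i$ to $m_i$ leaves the right-hand side of its own inflow constraint unchanged while still relaxing all downstream constraints, and maximality again forces $z^\ast_i=m_i$, which is exactly \eqref{eq:z-def}. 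The main obstacle I anticipate is the coupling among the zero-travel-time links through the sets $\mc E_i$: this is what prevents one from simply setting each variable to its individual bound and is precisely why I route uniqueness through the lattice structure (closure under $\vee$ and existence of a greatest element) rather than attempting to solve the coupled fixed-point system directly; the no-self-loop observation is the small but essential ingredient that legitimizes the coordinatewise push-up in the $i\in\mc I(x)$ case.
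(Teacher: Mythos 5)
Your proposal is correct and takes essentially the same approach as the paper: both rest on the key lemma that the feasible set of \eqref{z-linprog} is closed under componentwise maximum (which, with $\eta>0$, gives uniqueness and $\eta$-independence), and both verify \eqref{eq:z-def} by the same push-up perturbation argument, where your explicit no-self-loop observation ($i\notin\mc E_i$) is the same fact the paper uses implicitly. The only cosmetic differences are that you extract the greatest feasible element directly rather than deriving a contradiction between two optima, and your contraction fixed-point argument for nonemptiness is unnecessary since $z=0$ is trivially feasible.
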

\vspace{0.1in}
Proposition~\ref{prop:solution-properties} implies that $z(x(t),t)$ in \eqref{eq:flow-dynamics} is well-defined. With regards to \eqref{eq:z-def}, indeed for $i \in \mc I(x)$, 
$z_i(x,t)=\tilde{\lambda}_i(t) + \sum_{j \in \mc E_i} R_{ji} z_j(x,t)$, except possibly at time instants when there is a change in $\mc I(x)$. 
It is rather straightforward to see that \eqref{eq:flow-model-main} admits a unique solution in between such changes. The frequency of such changes in general depends on $\lambda(t)$, $c(t)$, and the initial condition $\beta(0)$. We bound the frequency of changes, and thereby establish existence and uniqueness of the solution to \eqref{eq:flow-model-main} for all $t \geq 0$, under the following practical assumption.

\begin{assumption}
\label{ass:piece-wise-constant}
$\{\map{\lambda_i}{[0,T]}{\real_{\geq 0}^{\mc E}}\}_{i \in \mc E}$, $\{\map{c_i}{[0,T]}{\real_{\geq 0}^{\mc E}}\}_{i \in \mc E}$, and $\{\map{z_i}{[-\bar{\delta},0]}{\real_{\geq 0}^{\mc E}}\}_{i \in \mc E}$ are all piece-wise constant.
\end{assumption}

The proof of the next result is provided in Appendix.

\begin{proposition}
\label{prop:existence-traffic-dynamics-solution}
Let $\lambda(t)$, $c(t)$ and the initial condition $(x(0),\beta(0))$ satisfy Assumption~\ref{ass:piece-wise-constant}. Then, there exists a unique solution $(x(t),\beta(t)) \geq 0$ for all $t \geq 0$, to \eqref{eq:flow-model-main}.
\end{proposition}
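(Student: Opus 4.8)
The plan is to combine the method of steps (to absorb the positive inter-link delays) with a concatenation-of-affine-pieces argument on intervals where the exogenous data and the active mode are frozen, using Proposition~\ref{prop:solution-properties} throughout to guarantee that the outflow map is \emph{single-valued}. Since $\mc E$ is finite there are finitely many travel times, so $\underline{\delta}_+ := \min\{\delta_{ji} : R_{ji}>0,\ \delta_{ji}>0\}$ is strictly positive whenever any positive delay exists (if every $\delta_{ji}$ is zero, then $\mc E_i=\{j:R_{ji}>0\}$ and $\tilde{\lambda}_i=\lambda_i$ in \eqref{eq:lambda-tilde-def}, so the step argument below is carried out directly on $[0,T]$). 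First I would set up an induction over the steps $[k\underline{\delta}_+,(k+1)\underline{\delta}_+)$ with the inductive hypothesis that a unique solution has been constructed up to $k\underline{\delta}_+$ and that $z(\cdot)$ is piece-wise constant there; the base case on $[-\bar{\delta},0)$ is Assumption~\ref{ass:piece-wise-constant}. The point of the step length is that for every $j$ with $\delta_{ji}>0$ the argument $t-\delta_{ji}$ in \eqref{eq:lambda-tilde-def} lies strictly before $k\underline{\delta}_+$, so on the current step each delayed term is already known and piece-wise constant; together with piece-wise constant $\lambda_i$ and $c_i$ this makes $\tilde{\lambda}(t)$ and $c(t)$ piece-wise constant on the step.

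Next I would fix one subinterval $[a,b)$ of the current step on which $\tilde{\lambda}$ and $c$ are \emph{constant}, and analyze the hybrid dynamics there. On any sub-subinterval where the empty set $\mc I(x(t))$ equals a fixed $\mc I_0$, Proposition~\ref{prop:solution-properties} shows that $z$ solves a fixed system and is therefore a constant vector; hence each $\dot{x}_i$ in \eqref{eq:flow-dynamics} is constant and $x$ is affine in $t$, giving existence and uniqueness on that piece immediately. Non-negativity is automatic: by \eqref{eq:z-def}, whenever $x_i=0$ one has $z_i=\min\{c_i,\ \tilde{\lambda}_i+\sum_{j\in\mc E_i}R_{ji}z_j\}$, so $\dot{x}_i=[\tilde{\lambda}_i+\sum_{j\in\mc E_i}R_{ji}z_j-c_i]^+\geq 0$ and the queue cannot cross below zero; and $z\geq 0$ because it is a minimum of non-negative quantities, whence $\beta(t)\geq 0$ as well. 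The transition times between consecutive modes are the times at which some $x_i$ first reaches $0$ (a deterministic first-hitting time of an affine trajectory) or leaves $0$ (exactly when the above $[\cdot]^+$ is positive), so each transition time, and the resulting trajectory, is uniquely determined; the single-valuedness from Proposition~\ref{prop:solution-properties} is what rules out the non-uniqueness that discontinuous right-hand sides usually create.

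The crux — and the step I expect to be the main obstacle — is to bound the number of mode transitions on $[a,b)$, i.e.\ to rule out an accumulation (Zeno behavior) of changes of $\mc I(x(t))$. The idea I would pursue is monotonicity of the outflow in the empty set: inspecting \eqref{eq:z-def}, shrinking $\mc I_0$ (a link becoming non-empty) can only raise the corresponding $z_i$ from $\min\{c_i,\cdot\}$ to $c_i$, and through the coupling $\sum_{j\in\mc E_i}R_{ji}z_j$ this can only raise every downstream outflow, while a link becoming empty can only lower outflows. Combined with the single-link observation that, for constant data, an empty link with effective inflow at most $c_i$ stays empty while a non-empty link with effective inflow at least $c_i$ stays non-empty, this should force the mode sequence to be eventually monotone and preclude any given empty set from recurring infinitely often; since there are at most $2^{|\mc E|}$ possible modes, only finitely many transitions can occur on $[a,b)$. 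Making this rigorous in the presence of cycles in the zero-delay coupling graph (where the invertibility of $I-R^{\mathrm T}$ from Remark~\ref{ass:routing-matrix} is the relevant well-posedness ingredient) is the delicate part; a safe fallback is to show each link changes status at most a bounded number of times, which already yields finiteness.

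Finally I would assemble the pieces. On a bounded horizon $[0,T]$ there are finitely many data-change points of $\lambda$ and $c$, finitely many steps of length $\underline{\delta}_+$, and, by the previous paragraph, finitely many mode transitions on each resulting constant-data subinterval; hence $z(x(t),t)$ has only finitely many discontinuities and the unique affine pieces concatenate into a unique, continuous, non-negative $x(\cdot)$ with piece-wise constant $z(\cdot)$ on the whole step. This closes the induction, since the piece-wise constant $z$ produced on step $k$ is exactly what is fed, through the delays, into $\tilde{\lambda}$ on step $k+1$. Letting $k\to\infty$ (equivalently $T\to\infty$) yields the unique global solution $(x(t),\beta(t))\geq 0$ for all $t\geq 0$, as claimed.
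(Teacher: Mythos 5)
Your skeleton matches the paper's proof: both arguments freeze the delayed terms on successive time intervals, reduce to subintervals on which $\tilde{\lambda}$ and $c$ are constant, use the single-valuedness of \eqref{z-linprog} (Proposition~\ref{prop:solution-properties}) to get constant $z$ and affine $x$ between mode changes, and then must bound the number of changes of $\mc I(x(t))$. The genuine gap is exactly the step you flag as ``the main obstacle'': your finiteness argument is that the mode sequence ``should be eventually monotone'' and that no mode can recur infinitely often, with a ``safe fallback'' that each link changes status boundedly often --- none of which is actually proved, and ``eventually monotone'' would not even suffice, since it does not by itself exclude an accumulation of switchings before the monotone tail begins, which is precisely the Zeno scenario you must rule out. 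The paper closes this step with a sharper invariant (its Lemma~\ref{lem:existence-uniqueness-small-interval}): on an interval where $c$ is constant and $\tilde{\lambda}$ is constant (the paper even allows non-increasing piecewise constant), the set $\mc I(x(t))$ is \emph{non-contracting} after the initial instant, so there are at most $|\mc E|$ mode changes on the interval, plus possibly one contraction at the initial time. The missing proof is short: the argument establishing Proposition~\ref{prop:solution-properties} (componentwise maximum of two feasible points is feasible) shows the unique optimizer of \eqref{z-linprog} dominates every feasible point, so enlarging $\mc I$ only adds constraints and hence only decreases every component of $z$, hence only decreases every link's inflow $\tilde{\lambda}_i + \sum_{j \in \mc E_i} R_{ji} z_j$. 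A positive link can join $\mc I$ only when its inflow is strictly below its capacity (otherwise its queue could not have decreased to zero), and every link persisting in $\mc I$ has inflow at most capacity by \eqref{eq:z-def}; after each expansion of $\mc I$ all these inflows only decrease, so no empty link can ever start growing again. Thus $\mc I$ never contracts, and Zeno behavior is impossible --- monotone throughout, not merely ``eventually.''

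Two further remarks. First, your concern about cycles in the zero-delay coupling graph is misplaced: the simultaneous fixed point in \eqref{eq:z-def} is resolved entirely by Proposition~\ref{prop:solution-properties}; invertibility of $I-R^T$ (Remark~\ref{ass:routing-matrix}) plays no role in existence and uniqueness, only in the later steady-state analysis. Second, once the non-contraction invariant is inserted, your induction is actually slightly \emph{more} general than the paper's. The paper takes its step length $\triangle$ to be a common divisor of the delays and of all switching instants of $\lambda$, $c$ and the initial $z$, and must therefore assume commensurability (it notes such $\triangle$ exists ``if, e.g., the three types of quantities are all rational numbers''); this alignment is what lets it propagate the non-increasing property of $z$ across steps so that Lemma~\ref{lem:existence-uniqueness-small-interval} applies with non-increasing $\tilde{\lambda}$. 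Your method of steps with length $\underline{\delta}_+$ (treating the all-zero-delay case separately), combined with refining each step into finitely many subintervals on which $\tilde{\lambda}$ is constant, only needs the delayed outflows to be piecewise constant with finitely many pieces, which is available by induction; no commensurability assumption is required.
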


\begin{remark}
\begin{enumerate}
\item \eqref{eq:flow-model-main} allows direct simulation of queue length dynamics under fixed-time control. Moreover, unlike \cite{Muralidharan.Pedarsani.ea:15}, existence of a unique solution to \eqref{eq:flow-model-main} does not require $\underline{\delta}$ to be strictly greater than zero. However, this comes at the expense of piecewise constant assumption.  
\item Assumption~\ref{ass:piece-wise-constant} is practically justified because a common model for a capacity function is such that it is equal to the saturated capacity during the green phase and zero otherwise, and external inflows, as well as past departures before $t=0$ can be modeled as a sequence of rectangular pulses modeling vehicle platoons.
\item Proposition~\ref{prop:existence-traffic-dynamics-solution} holds true also when the capacity function is state-dependent (referred to as \emph{adaptive} traffic signal control), but piecewise constant.
For example, let the capacity function $c_i(t)$ be equal to  $c_i^{\text{max}}$
if $t \in [\theta_i,\theta_i+g_i(0)] \cup [T+\theta_i, T+ \theta_i + g_i(1)] \cup \ldots$, and equal to zero otherwise, where $\theta_i \in [0,T]$ is the offset, and $\{g_i(0), g_i(1), \ldots\}$ is a sequence of green times. Such green times can be determined as a function of queue lengths. One such simple \emph{proportional} rule, when the capacity functions for all the incoming links at every intersection are mutually exclusive, is:
$$
g_i(k) = \frac{\|x_i(k-1:k)\|_{\infty}}{\sum_j \|x_j(k-1:k)\|_{\infty}}, \qquad k = 1, 2, \ldots
$$
The summation in the denominator is over all links incoming to the intersection to which $i$ is incident, and $\|x_i(k-1:k)\|_{\infty}:=\max_{t \in [(k-1)T,kT]} x_i(t)$ is the maximum queue length during the $k$-th cycle on link $i$.
\end{enumerate}
\end{remark}

\subsection{Periodic Solution}
\label{sec:periodic}
It is straightforward to see that the solution to \eqref{eq:flow-model-main} can be equivalently described in terms of $(x(t),z(t))$. Therefore, we shall use $(x(t),\beta(t))$ and $(x(t),z(t))$ interchangeably to refer to the solution to \eqref{eq:flow-model-main}.
We now develop a result analogous to the one in \cite{Muralidharan.Pedarsani.ea:15} on the existence of a globally attractive periodic orbit $(x^*(t),z^*(t))$, under the following  periodicity assumption.

\begin{assumption}
\label{ass:periodicity}
The external inflow functions $\{\lambda_i(t)\}_{i \in \mc E}$ and capacity functions $\{c_i(t)\}_{i \in \mc E}$ are all periodic with the same period $T>0$.\footnote{As noted in \cite{Muralidharan.Pedarsani.ea:15}, requiring the period to be the same is without loss of generality.}
\end{assumption}
Let 
\begin{equation}
\label{eq:average-quantity-notations}
\bar{\lambda}_i:=\frac{1}{T} \int_0^T \lambda_i(t) \, dt, \qquad \bar{c}_i:=\frac{1}{T} \int_0^T c_i(t) \, dt, \qquad i \in \mc E
\end{equation}
be the external inflow and capacity functions averaged over one period. Let $\bar{c} = \{\bar{c}_i: \, i \in \mc E\}$ and $\bar{\lambda}=\{\bar{\lambda}_i: \, i \in \mc E\}$ denote the collection of external inflow and capacity functions, respectively, for all links.
The following stability condition will be one of the sufficient conditions for establishing periodicity of $(x(t),z(t))$ at steady state.

\begin{definition}[Stability Condition]
\label{def:stability}
There exists $\epsilon>0$ such that $ [I-R^T] \bar{c} > \bar{\lambda} + \epsilon \onebf$.
\end{definition}

The proof of the following theorem is provided in the Appendix.

\begin{theorem}
\label{thm:globally-attractive-existence}
Let $\lambda(t)$, $c(t)$ and the initial condition $(x(0),\beta(0))$ satisfy Assumptions~\ref{ass:piece-wise-constant} and \ref{ass:periodicity}, and the stability condition in Definition~\ref{def:stability}. Then, there exists a unique periodic state trajectory $(x^*,z^*)$ with period $T$ for \eqref{eq:flow-model-main}, to which every trajectory converges.  
\end{theorem}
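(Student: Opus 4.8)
The plan is to realize the periodic orbit as a fixed point of the period-$T$ return map and to obtain existence, uniqueness, and global attractivity simultaneously from a single dichotomy: the map is $\ell_1$-non-expansive and, because of the leakage encoded in the sub-stochasticity of $R$, it is asymptotically strictly contractive. Since $\lambda$ and $c$ are $T$-periodic (Assumption~\ref{ass:periodicity}), Proposition~\ref{prop:existence-traffic-dynamics-solution} lets me define the Poincar\'e map $\Phi$ that sends an admissible initial state $(x(0),\beta(0))$ to $(x(T),\beta(T))$; a $T$-periodic orbit of \eqref{eq:flow-model-main} is exactly a fixed point of $\Phi$, and global attractivity is convergence of $\Phi^n(s)$ to that fixed point for every admissible $s$. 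Throughout I would work with the combined $\ell_1$ size $N(t):=\|x(t)\|_1+\|\beta(t)\|_1$ of queued-plus-in-transit vehicles, and, for two solutions driven by the same $\lambda,c$, with the discrepancy
\[
D(t):=\|x(t)-x'(t)\|_1+\sum_{i,j\in\mc E} R_{ji}\int_{t-\delta_{ji}}^{t}|z_j(s)-z_j'(s)|\,ds,
\]
which is the natural $\ell_1$ distance on the state space once in-transit mass is included.

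First I would build a compact absorbing set. Summing \eqref{eq:flow-dynamics} over all links shows that $N$ changes only through external inflow and through the fraction $1-\sum_i R_{ji}$ of each outflow that leaves the network; averaging over one period and invoking the stability condition $[I-R^T]\bar c>\bar\lambda+\epsilon\onebf$ (Definition~\ref{def:stability}) gives a strict negative drift, $N((n+1)T)\le N(nT)-\epsilon'$, whenever $N(nT)$ exceeds a fixed threshold, since large queues drain at capacity $z_i=c_i$ over most of the period. Hence every trajectory enters and remains in a bounded, $\Phi$-invariant set; the piecewise-constant structure (Assumption~\ref{ass:piece-wise-constant}) confines the relevant histories to a finite-dimensional family of bounded break-point values, inside which this absorbing set is compact and $\Phi$ is continuous.

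Next I would show $\Phi$ is $\ell_1$-non-expansive, i.e.\ $D$ is non-increasing. The queue lengths and the integrated departures are continuous in $t$, so $D$ is continuous and differentiable off the finitely many (by Assumption~\ref{ass:piece-wise-constant}) instants at which $\mc I(x)$ or $\mc I(x')$ jumps. On any link with both queues positive, \eqref{eq:z-def} forces $z_i=c_i=z_i'$, so outflows can differ only on links momentarily in $\mc I(x)\cup\mc I(x')$, where \eqref{eq:z-def} realizes $z_i$ as a non-decreasing, coordinatewise $1$-Lipschitz function of the driving inflow $\tilde\lambda_i+\sum_j R_{ji}z_j$. Differentiating $\|x-x'\|_1$ and matching the resulting sign terms against the in-transit integral terms, the cross contributions telescope and every surviving term is non-positive, yielding $\dot D\le0$ between break-points and hence $D$ non-increasing.

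Finally I would upgrade this to asymptotic strict contraction and conclude. By Remark~\ref{ass:routing-matrix} the spectral radius of $R$ is strictly less than one, so from every link there is a routed path along which a strictly positive fraction of mass exits the network within a bounded number of periods (Assumption~\ref{ass:connectivity}(ii)); tracing how a discrepancy concentrated on queues and in transit is transported by $R$ and the delays $\delta_{ji}$ and partly discharged through the exit rows shows that a fixed fraction of $D$ is dissipated each period, so $D(nT)\to0$ for every pair of trajectories. A non-expansive, continuous self-map of a compact set whose orbits satisfy $D(nT)\to0$ has a unique fixed point to which every orbit converges, which yields the unique globally attractive periodic orbit $(x^*,z^*)$. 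I expect this last step to be the crux: one must verify that the discontinuities of the dynamics, namely the changes in $\mc I(x)$, cannot stall the outflow of discrepancy, and the passage to the $\ell_1$ norm is precisely what keeps these transport estimates additive and tractable, in contrast to the sup-norm continuity arguments of \cite{Muralidharan.Pedarsani.ea:15}.
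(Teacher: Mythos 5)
Your overall architecture (Poincar\'e map, $\ell_1$ non-expansiveness, compact absorbing set, abstract fixed-point conclusion) is sound and close in spirit to the paper's proof, and your abstract final step is valid: continuity of $\Phi$ plus compactness plus pairwise convergence $D(nT)\to 0$ does yield a unique globally attractive fixed point, with no contraction rate needed. The genuine gap is in how you justify $D(nT)\to 0$. Your mechanism --- discrepancy is ``transported by $R$'' along routed paths and ``a fixed fraction of $D$ is dissipated each period'' because the spectral radius of $R$ is less than one --- misconstrues how discrepancy between two trajectories evolves. On any link where \emph{both} queues are positive, \eqref{eq:z-def} forces $z_i(t)=c_i(t)=z_i'(t)$: the outflow difference is exactly zero, so the queue-length discrepancy on that link is frozen in place and is not routed anywhere, regardless of $R$. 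Discrepancy moves or dissipates only when a queue empties in at least one of the two trajectories, so the routing-matrix transport picture gives no per-period dissipation bound; the claimed geometric decay is unsupported (and is in any case stronger than what your fixed-point lemma requires).

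What closes this gap --- and is precisely what the paper does in Proposition~\ref{prop:contraction} --- is a lattice reduction via the monotonicity property (Lemma~\ref{lem:monotonicity}): replace the arbitrary pair of initial conditions by their componentwise maximum and minimum, whose trajectories sandwich the original two and whose $\ell_1$ gap dominates the original one. For the \emph{ordered} pair, the discrepancy is no longer an abstract quantity: it equals the number of excess (``red'') vehicles physically present in the larger system, and these vehicles flow through the network like any others. The stability condition of Definition~\ref{def:stability} then guarantees that every vehicle, hence every red vehicle, eventually exits the network, giving $D(nT)\to 0$ with no rate estimate needed. Without this reduction you cannot treat discrepancy as conserved mass subject to exit, which is the step your sketch implicitly relies on. Two secondary remarks: your total-mass drift argument for the absorbing set is also shaky (mass concentrated upstream of links with unit row sums does not obviously force $N$ to decrease); the paper's link-wise bound in Lemma~\ref{lem:boundedness} is the robust version. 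And your direct differentiation of $\|x-x'\|_1$ for non-expansiveness can be made rigorous, but it needs careful sign bookkeeping on the coincidence set $\{x_i=x_i'=0\}$; the paper avoids this by proving contraction only for ordered pairs, where all signs are definite.
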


\subsection{Problem Statement}
While one can use \eqref{eq:flow-model-main} to obtain the steady state $(x^*,z^*)$ by direct simulations, in this paper, our objective is to develop an alternate framework to obtain $(x^*,z^*)$.

\section{Steady State Computation for an Isolated Link}
\label{sec:steady-state-single-link}
Let $y_i(t)$ be the cumulative inflow into link $i \in \mc E$. Referring to \eqref{eq:flow-dynamics}, this quantity is given by $y_i(t):=\lambda_i(t) + \sum_{j \in \mc E} R_{ji} z_j(t-\delta_{ji})$. For an isolated link $i$, $y_i(t)=\lambda_i(t)$. It is easy to see that $x_i^*(t) \equiv 0$ if $y_i(t) \leq c_i(t)$ for all $t \in [0,T)$. In order to avoid such trivialities, we assume that the set $\setdef{t \in [0,T)}{y_i(t) > c_i(t)}$ has non-zero measure.
The key in our approach is a procedure to easily compute $x_i^*(s)$ for some $s \in [0,T)$. Thereafter, $x_i^*(t)$ for all $t \in [0,T)$ can be easily obtained by simulating \eqref{eq:flow-model-main} over a time interval of length $T$. The natural candidates for such a $s \in [0,T)$ are the time instants when the queue length $x^*_i$ transitions between zero and positive values. We now provide a detailed procedure to compute such a transition point.
We implicitly assume throughout this and the next section that Assumption~\ref{ass:piece-wise-constant} and the stability condition in Definition~\ref{def:stability} holds true.


\begin{definition}[Transition Points]
\label{def:transition-points}
Let $\{\alpha_i^1, \ldots, \alpha_i^L\}$ be the time instants in $[0,T)$ when $x_i^*$ transitions from being zero to being positive. 
\end{definition}

Figure~\ref{fig:transition-points} illustrates the transition points for a sample scenario.

\begin{figure}[htb!]
\begin{center}				
\includegraphics[width=2.5in,angle=0]{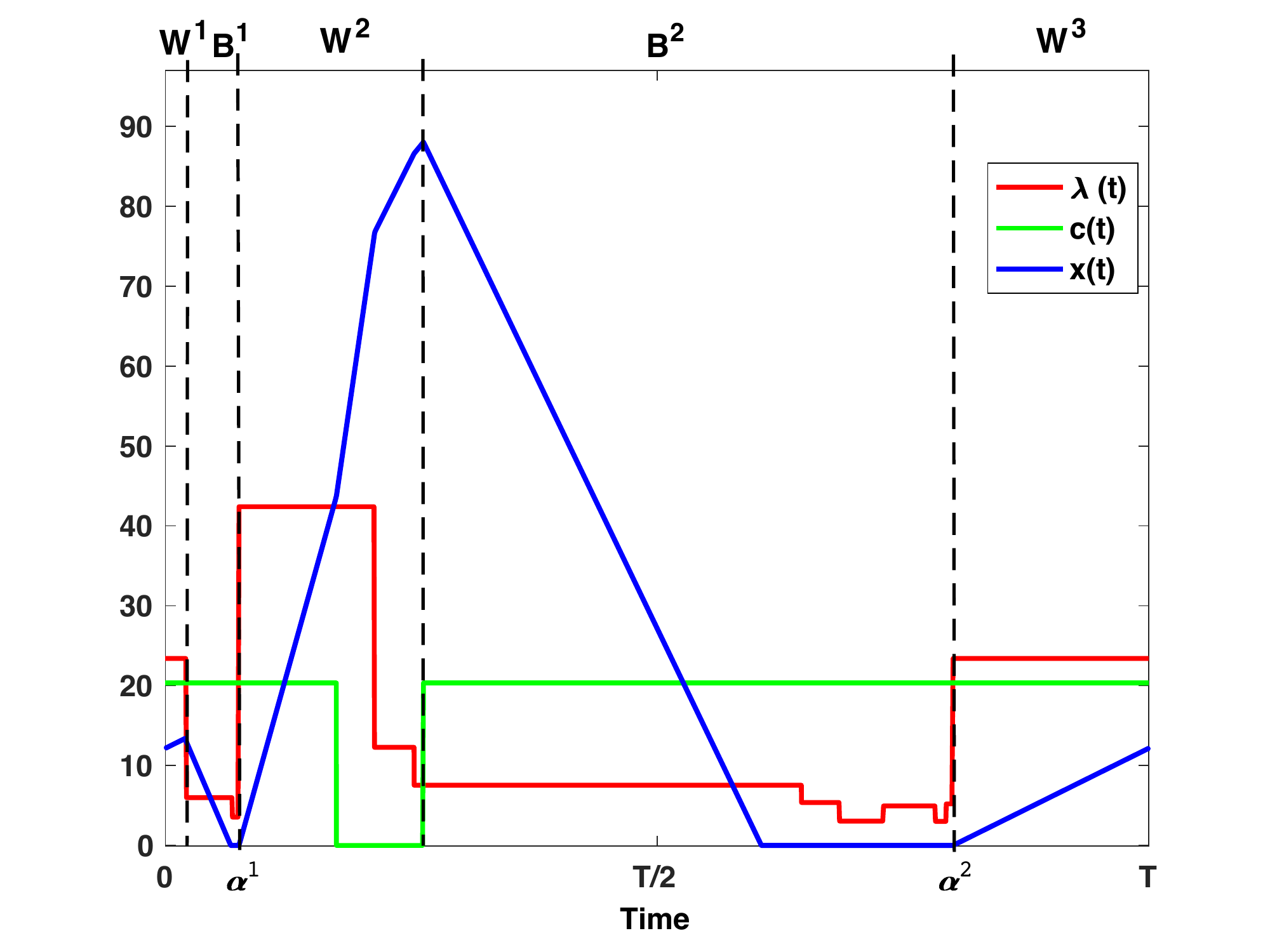} 
\end{center}
\caption{\small \sf Illustration of transition points, and negative/positive sets. In this case, $M^w=3$ and $M^b=2$. Subscript $i$ is not shown for brevity.}
\label{fig:transition-points}
\end{figure}

\begin{remark}
\label{rem:transition-point}
\begin{enumerate}
\item[(i)] Under the stability condition in Definition~\ref{def:stability}, $L \geq 1$, as also noted in \cite[Theorem 2]{Muralidharan.Pedarsani.ea:15}.
\item[(ii)] For a given $y_i(t)$ and $c_i(t)$, Theorem~\ref{thm:globally-attractive-existence} implies uniqueness of the resulting $(x_i^*,z_i^*)$, and hence of 
$\{\alpha_i^1, \ldots, \alpha_i^L\}$. 
%
%
\item[(iii)] As noted earlier, the knowledge of $x_i^*(s)$ at any single time instant $s \in [0,T)$ is sufficient to determine $x_i^*(t)$ over the entire period $[0,T)$. Indeed, $x^*(s)=0$ if $s \in \{\alpha_i^1, \ldots, \alpha_i^L\}$. By construction such a $x_i^*$ corresponds to a periodic orbit for \eqref{eq:flow-model-main}. 
Once $x_i^*$ is computed, inspired by Proposition~\ref{prop:solution-properties} and remarks immediately following it, let $z_i^*$ be given by:
\begin{equation}
\label{eq:z-star-def}
z_i^*(t) = \begin{cases} y_i(t) & x_i^*(t) = 0   \\
c_i(t) &  x_i^*(t) > 0
\end{cases}
\end{equation}
Periodicity of $x_i^*$, $y_i(t)$ and $c_i(t)$ imply that $z_i^*(t)$ in \eqref{eq:z-star-def} is periodic, i.e., $(x^*,z^*)$ is a periodic orbit. The uniqueness result in Theorem~\ref{thm:globally-attractive-existence} implies that this is indeed the desired object to be computed.
%
\end{enumerate}
\end{remark}

The time instant $s$ referenced in Remark~\ref{rem:transition-point} (iii), for whose computation we now provide a procedure, is $\alpha_i^L$. We need the notion of \emph{negative} and \emph{positive} sets, defined next.
\begin{definition}[Negative and Positive Sets]
\label{def:negative-positive-sets}
Let $\{B_i^1, \ldots, B_i^{M_i^b}\}$ be contiguous subsets of $[0,T)$ of non-zero size in which $y_i(t) < c_i(t)$, and let $\{W_i^1, \ldots, W_i^{M_i^w}\}$ be contiguous subsets of $[0,T)$ of non-zero size in which $y_i(t) > c_i(t)$. 
\end{definition}

\begin{remark}
\label{rem:positive-negative-sets}
\begin{enumerate}
\item[(i)] Since the set $\setdef{t \in [0,T)}{y_i(t) > c_i(t)}$ is assumed to have non-zero measure, under the stability condition in Definition~\ref{def:stability}, we have $M_i^b \geq 1$ and $M_i^w \geq 1$.
\item[(ii)] The sets $\{B_i^1, \ldots, B_i^{M_i^b}\}$ and $\{W_i^1, \ldots, W_i^{M_i^w}\}$ do not necessarily form a partition of $[0,T)$. Specifically, they exclude sets where $y_i(t)=c_i(t)$. 
\end{enumerate}
\end{remark}

Illustration of negative and positive sets are included in Figure~\ref{fig:transition-points}.
In preparation for the next result, let $\overline{B}_i^{k}=[\underline{b}_i^{k},\bar{b}_i^{k}]$, $k \in \until{M_i^b}$ and $\overline{W}_i^{k}=[\underline{w}_i^{k},\bar{w}_i^{k}]$, $k \in \until{M_i^w}$ be closures of $B_i^k$ and $W_i^k$, respectively. 

\begin{proposition}
\label{prop:main-computation}
Consider a link $i$ with inflow function $y_i(t)$ and capacity function $c_i(t)$, both periodic with period $T$. Let the transition points and positive/negative sets be given by Definitions~\ref{def:transition-points} and \ref{def:negative-positive-sets} respectively. Then, there exists a strictly increasing $\map{\myk_\alpha}{\until{L}}{\until{M_i^w}}$ such that $\alpha_i^{\ell} = \underline{w}_i^{\myk_{\alpha}(\ell)}$ for all $\ell \in \until{L}$.
%
\end{proposition}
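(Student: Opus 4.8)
The plan is to analyze the sign of $\dot{x}_i^*$ in the two regimes dictated by Proposition~\ref{prop:solution-properties}, and to argue that a transition of $x_i^*$ from zero to positive is compatible only with the inflow $y_i$ crossing strictly above the capacity $c_i$, an event which under Assumption~\ref{ass:piece-wise-constant} occurs precisely at the left endpoints $\underline{w}_i^k$ of the positive sets. For an isolated link we have $\mc E_i = \emptyset$ and $\tilde{\lambda}_i = y_i$, so Proposition~\ref{prop:solution-properties} gives $z_i^* = c_i$ when $x_i^* > 0$ and $z_i^* = \min\{c_i, y_i\}$ when $x_i^* = 0$. Hence $\dot{x}_i^* = y_i - c_i$ in the former regime and $\dot{x}_i^* = y_i - \min\{c_i, y_i\} = [y_i - c_i]^+$ in the latter. (This is consistent with \eqref{eq:z-star-def}, since on any interval where $x_i^*$ stays at zero one must have $y_i \leq c_i$.)

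First I would establish that on each positive set $W_i^k$ the queue is strictly increasing. Indeed, on $W_i^k$ we have $y_i > c_i$, so $\dot{x}_i^* = y_i - c_i > 0$ whether $x_i^* = 0$ or $x_i^* > 0$. Consequently $x_i^*$ can vanish at most at the left endpoint $\underline{w}_i^k$ and is strictly positive on the remainder of $W_i^k$; in particular, no zero-to-positive transition can occur in the interior of a positive set, since non-negativity of $x_i^*$ together with strict monotonicity would force $x_i^* < 0$ just to the left of such an interior zero. I would then observe that wherever $y_i \leq c_i$ and $x_i^* = 0$ we have $\dot{x}_i^* = [y_i - c_i]^+ = 0$, so the queue remains pinned at zero; thus the queue cannot leave zero while the inflow stays at or below capacity.

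Combining these two facts, a transition point $\alpha_i^\ell$, at which $x_i^*(\alpha_i^\ell) = 0$ and $x_i^*$ is positive immediately afterward, must be a time at which $y_i$ changes from being $\leq c_i$ to being $> c_i$. Under Assumption~\ref{ass:piece-wise-constant}, $y_i - c_i$ is piecewise constant, so such a crossing coincides with the left endpoint $\underline{w}_i^k$ of some positive set. This defines $\myk_\alpha$ through $\alpha_i^\ell = \underline{w}_i^{\myk_\alpha(\ell)}$; it is well-defined because the positive sets are pairwise disjoint, so each transition point equals the left endpoint of a unique positive set. Since at most one zero-to-positive transition can occur per positive set (the queue being strictly increasing, and hence positive, throughout $W_i^k$ after $\underline{w}_i^k$), distinct transition points correspond to distinct positive sets; as both $\{\alpha_i^\ell\}$ and $\{\underline{w}_i^k\}$ are listed in increasing order, the equality $\alpha_i^\ell = \underline{w}_i^{\myk_\alpha(\ell)}$ forces $\myk_\alpha$ to be strictly increasing.

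The main obstacle I anticipate is the careful book-keeping at the boundary regions where $y_i = c_i$, which Remark~\ref{rem:positive-negative-sets}(ii) excludes from both the positive and negative sets: I must confirm that a left endpoint $\underline{w}_i^k$ is correctly identified as the location where the sign of $y_i - c_i$ turns strictly positive even when it is immediately preceded by a plateau at $y_i = c_i$ rather than by a strict negative set, and that the queue indeed sits at zero throughout such a plateau before the transition (which follows from $\dot{x}_i^* = 0$ there). The piecewise-constant structure of Assumption~\ref{ass:piece-wise-constant} is exactly what makes these sign changes finite in number and pins them to the finitely many interval endpoints, so invoking it is essential at this step.
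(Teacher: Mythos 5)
Your proof is correct and takes essentially the same approach as the paper: your two key facts---the queue is pinned at zero wherever $y_i \le c_i$, and is strictly increasing (hence positive after the left endpoint) throughout any positive set---are precisely the paper's two contradiction cases, and both arguments conclude that each $\alpha_i^{\ell}$ must be a left endpoint $\underline{w}_i^{k}$, with strict monotonicity of $\myk_\alpha$ following from the ordering. The only difference is organizational: you argue directly on the sign of $y_i - c_i$ immediately before and after the transition, whereas the paper runs an index-based case analysis ($a_1$ versus $a_2$) by contradiction.
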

\begin{proof} 
We drop the subscript $i$ for brevity in notation. 
The strictly increasing property of $\myk_{\alpha}$, if it exists, is straightforward; we provide a proof for existence. For a given $\ell \in \until{L-1}$, we let $\gamma^{\ell} \in (\alpha^{\ell},\alpha^{\ell+1})$ denote the time instant in between $\alpha^{\ell}$ and $\alpha^{\ell+1}$ when the queue length transitions from being positive to being zero. Similarly, we let $\gamma^L \in (\alpha^L,T)$ be the time instant in between $\alpha^L$ and $T$ when the queue length transitions from being positive to zero if it exists, or else we let $\gamma^L=T$. We also let $\gamma^0 \in (0,\alpha^1)$ be the time instant in between $0$ and $\alpha^1$ when the queue length transitions from being positive to zero if it exists, or else we let $\gamma^0=0$.

Assume, by contradiction, that there exists $\ell  \in \until{L}$  such that  ${\alpha}^{\ell} \notin \{\underline{w}^{1}, \ldots, \underline{w}^{M^w}\}$. Let $$a_1:=\max \setdef{a \in \until{M^w}}{\underline{w}^a < \alpha^{\ell}}$$ if it exists, and is equal to zero otherwise. Similarly, let $a_2:=\max \setdef{a \in \until{M^b}}{\underline{b}^a \leq \alpha^{\ell}}$, if it exists, and is equal to zero otherwise. Since $a_1$ and $a_2$ can not both be equal to zero, we have $a_1 \neq a_2$. Therefore, consider the following cases, where we use the convention that $\underline{w}^0=0=\underline{b}^0$: 
\begin{enumerate}
\item $\underline{w}^{a_1} < \underline{b}^{a_2}$: 
From the definition of $a_1$, we have (i) $\alpha^{\ell} \in \left[\underline{b}^{a_2}, \underline{w}^{a_1+1}\right)$ if $a_1 < M^w$, or (ii) $\alpha^{\ell} \in \left[\underline{b}^{a_2}, T\right]$ otherwise. In case (i), $\exists \, \epsilon > 0$ such that $\alpha^{\ell} + \epsilon < \min\{\underline{w}^{a_1+1},\gamma^{\ell}\}$, implying $y(t)-z(t)=y(t)-c(t) \leq 0$ for all 
$t \in [\alpha^{\ell},\alpha^{\ell}+\epsilon]$. Similar argument holds true for case (ii). Therefore, 
%
$ 
x(\alpha^{\ell} + \epsilon)= x(\alpha^{\ell} ) + \int_{\alpha^{\ell}}^{\alpha^{\ell}+\epsilon} (y(t)-c(t)) \, dt \leq x(\alpha^{\ell}) = 0
$
which is in contradiction to $x(\alpha^{\ell}+\epsilon)>0$, since $\alpha^{\ell}+\epsilon \in \left(\alpha^{\ell}, \gamma^{\ell} \right)$. 

\item $\underline{b}^{a_2} < \underline{w}^{a_1}$: The definitions of $a_1$ and $a_2$ imply that $\alpha^{\ell} \in \left(\underline{w}^{a_1}, \bar{w}^{a_1}\right]$. Therefore, $\exists \, \epsilon >0$ such that $\alpha^{\ell} - \epsilon > \max\{\underline{w}^{a_1}, \gamma^{\ell-1}\}$, which implies that $y(t)>c(t)$ for all $t \in [\alpha^{\ell}-\epsilon,\alpha^{\ell}]$. Therefore, 
$x(\alpha^{\ell}) =x(\alpha^{\ell}-\epsilon) + \int_{\alpha^{\ell}-\epsilon}^{\alpha^{\ell}} (y(t)-z(t)) \, dt  = \int_{\alpha^{\ell}-\epsilon}^{\alpha^{\ell}} (y(t)-z(t)) \, dt  > \int_{\alpha^{\ell}-\epsilon}^{\alpha^{\ell}} (c(t)-z(t)) \, dt \geq 0$, 
which contradicts $x(\alpha^{\ell})=0$.
\end{enumerate}
This establishes the proposition. 
\end{proof}


Proposition~\ref{prop:main-computation} narrows down our search for $\alpha_i^L$. We now sharpen this result to the point where it readily yields $\alpha_i^L$. In prepartion for this result, we need a few more definitions. For $s_1, s_2 \in [0,T]$, let
$$
C_i(s_1,s_2):=\int_{s_1}^{s_2} c_i(t) \, dt, \qquad 
Y_i(s_1,s_2):=\int_{s_1}^{s_2} y_i(t) \, dt
$$

Let $\mc W_i^{\alpha} := \left\{\underline{w}_i^{\mykk_1}, \ldots,  \underline{w}_i^{\mykk_m}\right \}$ be such that $\mykk_1=1$, and, for $j \in \{2, \ldots, m\}$, 
\begin{multline}
\label{eq:M-def}
\mykk_j = \argmin \Big \{ind \in \{\mykk_{j-1}+1, \ldots, M_i^w\} \, \Big | \, \exists \, p \in \until{M_i^b} \text{ s.t. } \bar{b}_i^p \in [\underline{w}_i^{ind-1},\underline{w}_i^{ind}]  \quad \& \\ Y_i(\underline{w}_i^{\mykk_{j-1}},\bar{b}_i^p) \leq C_i(\underline{w}_i^{\mykk_{j-1}},\bar{b}_i^p) \text{ if } \bar{b}_i^p < \underline{w}_i^{ind}, \text{ or } Y_i(\underline{w}_i^{\mykk_{j-1}},\bar{b}_i^p) < C_i(\underline{w}_i^{\mykk_{j-1}},\bar{b}_i^p) \text{ if } \bar{b}_i^p = \underline{w}_i^{ind}\Big \}
\end{multline}

where $m$ is implicitly defined by the value of $\mykk_j$ where the set over which $\argmin$ is taken in \eqref{eq:M-def} is empty. In words, \eqref{eq:M-def} implies that, for $j = 2, \ldots, m$, $\mykk_j$ is the index of the next positive set before which  there exists a negative set over which the solution to \eqref{eq:flow-model-main}, assuming $x(\underline{w}_i^{\mykk_{j-1}})=0$, hits zero. The ``or" in the second line of \eqref{eq:M-def} is to ensure that the time instant when the trajectory hits zero does not coincide with $\underline{w}^{ind}$, which is a candidate for $\alpha_i^{\ell}$ for some $\ell \in \until{L}$ (cf. Proposition~\ref{prop:gamma-1-computation-correctness}). See Figure~\ref{fig:alpha-construction} for an illustration.

\begin{figure}[htb!]
\begin{center}				
\includegraphics[width=2.5in,angle=0]{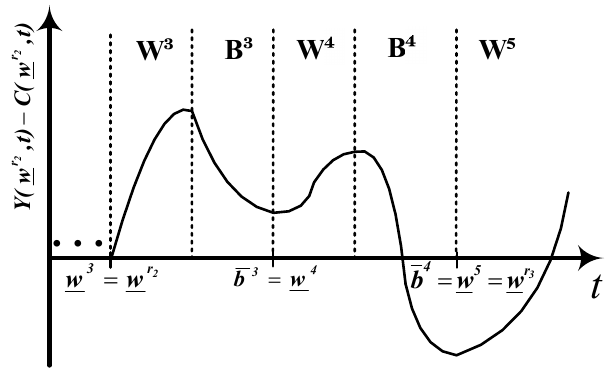} 
\end{center}
\caption{\small \sf Illustration of the procedure in \eqref{eq:M-def} to compute $\mc W_i^{\alpha}$. Specifically, the figure illustrates how \eqref{eq:M-def} determines $\underline{w}^{\mykk_3}$ to be equal to $\underline{w}^5$, given $\underline{w}^{\mykk_2} =\underline{w}^3$. Subscript $i$ is not shown for brevity.}
\label{fig:alpha-construction}
\end{figure}

Clearly, $\mc W_i^{\alpha} \subseteq \{\underline{w}_i^1, \ldots, \underline{w}_i^{M_i^w}\}$, which from Proposition~\ref{prop:main-computation} is known to contain $\{\alpha_i^1, \ldots, \alpha_i^L\}$. The next result shows that in fact the last $L$ entries of $\mc W_i^{\alpha}$ correspond to $\{\alpha_i^1, \ldots, \alpha_i^L\}$.


\begin{proposition}
\label{prop:gamma-1-computation-correctness}
Consider a link $i$ with inflow function $y_i(t)$ and capacity function $c_i(t)$, both periodic with period $T$, and the corresponding set $\mc W_i^{\alpha}$ defined via \eqref{eq:M-def}. Then $\{\alpha_i^1, \ldots, \alpha_i^L\} \subseteq \mc W_i^{\alpha}$, and, in particular, 
\begin{equation}
\label{eq:alpha-exact-expr}
\alpha_i^{\ell}=\underline{w}_i^{\mykk_m+\ell-L}, \qquad \ell \in \until{L}
\end{equation}
\end{proposition}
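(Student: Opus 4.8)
The plan is to read the recursion \eqref{eq:M-def} dynamically. I claim that $\mc W_i^{\alpha}$ is exactly the ordered list of instants at which an auxiliary trajectory $\tilde x_i$ crosses from zero to positive, where $\tilde x_i$ solves the single-link dynamics of \eqref{eq:flow-model-main} on $[\underline w_i^1,T)$ from the initialization $\tilde x_i(\underline w_i^1)=0$. Dropping the subscript $i$ and writing $F(t):=Y(\underline w^1,t)-C(\underline w^1,t)=\int_{\underline w^1}^t(y-c)$, the reflected single-link dynamics started from zero admits the closed form $\tilde x(t)=F(t)-\min_{\underline w^1\le s\le t}F(s)$, so $\tilde x(t)=0$ precisely when $F$ attains a new running minimum. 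Since $F$ strictly increases on positive sets and strictly decreases on negative sets, after each selected instant $\underline w^{\mykk_{j-1}}$ the trajectory $\tilde x$ first returns to zero at the point where $F$ first drops back to $F(\underline w^{\mykk_{j-1}})$; that point lies in some negative set $B^p$, and $\tilde x$ then stays at zero until the next positive set opens. This is exactly the event detected by the integral test $Y(\underline w^{\mykk_{j-1}},\bar b^p)\le C(\underline w^{\mykk_{j-1}},\bar b^p)$ in \eqref{eq:M-def}, the strict-inequality clause at $\bar b^p=\underline w^{ind}$ excluding the degenerate case in which $\tilde x$ only touches zero at the instant the next positive set begins. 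Hence $\mc W^{\alpha}=\{\underline w^{\mykk_1},\dots,\underline w^{\mykk_m}\}$ enumerates, in increasing order, the zero-to-positive crossings of $\tilde x$ on $[\underline w^1,T)$.

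Second, I would compare $\tilde x$ with the periodic trajectory $x^*$ through monotonicity of the reflected dynamics. Both obey the same dynamics driven by the same $(y,c)$, and $\tilde x(\underline w^1)=0\le x^*(\underline w^1)$, so the same closed form yields $x^*(t)-\tilde x(t)=[\,x^*(\underline w^1)+\min_{s\le t}F(s)\,]^+$, which is nonnegative and nonincreasing in $t$. Thus $\tilde x\le x^*$ on $[\underline w^1,T)$, and the two couple at the first instant $\gamma^*\ge\underline w^1$ where $x^*$ returns to zero: there $0\le\tilde x(\gamma^*)\le x^*(\gamma^*)=0$, and identical dynamics from a common value force $\tilde x\equiv x^*$ thereafter.

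Third, I would locate the true transition points relative to $\gamma^*$. By Proposition~\ref{prop:main-computation} each $\alpha^{\ell}$ is a left endpoint of a positive set, and $\underline w^1$ is the smallest such endpoint, so no $\alpha^{\ell}$ lies in $[0,\underline w^1)$. Moreover $x^*$ has no zero-to-positive crossing in $[\underline w^1,\gamma^*)$, since $\gamma^*$ is the first return of $x^*$ to zero at or after $\underline w^1$, whence $x^*>0$ on $(\underline w^1,\gamma^*)$ (and $\underline w^1$ itself is a crossing only in the already-coupled case $x^*(\underline w^1)=0$). Consequently all of $\alpha^1<\dots<\alpha^L$ lie in $[\gamma^*,T)$, where $\tilde x=x^*$; so the crossings of $\tilde x$ on $[\gamma^*,T)$ are exactly $\alpha^1,\dots,\alpha^L$, while any crossings of $\tilde x$ on $[\underline w^1,\gamma^*)$ are spurious artifacts of the zero initialization. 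Reading off the enumeration from the first step, the last $L$ elements of $\mc W^{\alpha}$ are precisely $\alpha^1,\dots,\alpha^L$, which is \eqref{eq:alpha-exact-expr}; in particular $\{\alpha^1,\dots,\alpha^L\}\subseteq\mc W^{\alpha}$.

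The main obstacle I anticipate is the first step: rigorously matching the purely integral/geometric test in \eqref{eq:M-def}, phrased through the endpoints $\bar b^p$ and $\underline w^{ind}$, with the dynamic event ``$\tilde x$ returns to zero and remains there until the next positive set.'' This needs the fact that the first negative set whose right endpoint satisfies $F(\bar b^p)\le 0$ is indeed the one in which $\tilde x$ first regains zero (using $F>0$ at all earlier negative-set right endpoints, the running minimum not yet having been renewed), together with careful bookkeeping of the boundary case $\bar b^p=\underline w^{ind}$ handled by the ``or'' clause. The monotonicity and coupling of the second and third steps are routine once the reflected-dynamics closed form is available, and periodicity and the stability condition enter only to guarantee that $x^*$ is well defined and returns to zero, so that $\gamma^*$ exists.
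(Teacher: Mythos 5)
Your proposal is correct in substance, but it takes a genuinely different route from the paper. The paper proves $\{\alpha_i^1,\ldots,\alpha_i^L\}\subseteq \mc W_i^{\alpha}$ by contradiction: assuming some $\alpha^{\ell}\notin\mc W^{\alpha}$, it takes the largest $\hat w\in\mc W^{\alpha}$ below $\alpha^{\ell}$, and uses the positive-to-zero transition $\gamma^{\ell-1}$ preceding $\alpha^{\ell}$ (borrowed from the proof of Proposition~\ref{prop:main-computation}) together with a mass-balance identity along $x^*$ itself to show the integral test in \eqref{eq:M-def} passes at the negative set containing $\gamma^{\ell-1}$, so the recursion would have selected $\alpha^{\ell}$ after $\hat w$ --- a contradiction; \eqref{eq:alpha-exact-expr} then follows because consecutive $\alpha$'s occupy consecutive slots of $\mc W^{\alpha}$ and $\alpha^L$ must be the final element. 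You instead characterize $\mc W^{\alpha}$ \emph{exactly}, as the zero-to-positive crossings of the auxiliary trajectory $\tilde x$ started at zero at $\underline{w}^1$ via the reflection formula $\tilde x(t)=F(t)-\min_{s\le t}F(s)$, and then couple $\tilde x$ to $x^*$ at the first return $\gamma^*$ of $x^*$ to zero. This is stronger than what the paper establishes (it pinpoints which elements of $\mc W^{\alpha}$ are spurious and why the genuine ones are the last $L$), and it reuses the monotonicity/coupling philosophy of Lemma~\ref{lem:monotonicity} and Proposition~\ref{prop:contraction} at the single-link level. The price: the reflected closed form is nowhere established in the paper (it follows from Propositions~\ref{prop:solution-properties} and \ref{prop:existence-traffic-dynamics-solution} by direct verification, but that lemma is owed), and your first step --- matching the integral tests of \eqref{eq:M-def} to first-return events, which you rightly flag as the main obstacle --- still requires the full monotone-on-pieces argument ($F(\bar b^p)\le 0$ at a right endpoint iff $\tilde x$ has returned to zero by $\bar b^p$), comparable in length to the paper's entire proof. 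One boundary caveat: at $\gamma^*$ itself, $\tilde x$ may have a genuine crossing while $x^*$ merely touches zero with positive values on both sides, so the crossings of $\tilde x$ on $[\gamma^*,T)$ can exceed $\{\alpha^1,\ldots,\alpha^L\}$ by this one point; since it precedes all genuine transition points, your conclusion that the last $L$ elements of $\mc W^{\alpha}$ are $\alpha^1,\ldots,\alpha^L$ survives, but your third step should be stated accordingly.
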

\begin{proof} 
%
We drop subscript $i$ for brevity in notation. Assume that there exists a $\ell \in \until{L}$ such that $\alpha^{\ell}=\underline{w}^{\myk_{\alpha}(\ell)} \notin \{\underline{w}^{\mykk_1}, \ldots, \underline{w}^{\mykk_m}\}$. let $\hat{w}$ be the largest element in $\{\underline{w}^{\mykk_1}, \ldots, \underline{w}^{\mykk_m}\}$ such that $\hat{w} < \alpha^{\ell}$. Since $\mykk_1=1$ (by definition), taking into account Proposition~\ref{prop:main-computation}, $\hat{w} \geq \underline{w}^{\mykk_1}$ is well-defined. Recall the definition of $\gamma^{\ell-1} \in (\alpha^{\ell-1},\alpha^{\ell})$ from the proof of Proposition~\ref{prop:main-computation}, and in particular that $\alpha^{\ell}$ is the $\underline{w}^k$ immediately after $\gamma^{\ell-1}$. 
If $\gamma^{\ell-1} < \hat{w}$, then $\alpha^{\ell} \leq \hat{w}$, giving a contradiction. Therefore, $\hat{w} < \gamma^{\ell-1} < \alpha^{\ell}$. It is easy to see that $\gamma^{\ell-1} \in (\underline{b}^{\zeta},\bar{b}^{\zeta}]$ for some $\zeta \in \until{M^b}$. Therefore, $x(\hat{w})+ Y(\hat{w},\bar{b}^{\zeta})-C(\hat{w},\bar{b}^{\zeta}) = x(\hat{w})+ Y(\hat{w},\gamma^{\ell-1})-C(\hat{w},\gamma^{\ell-1}) + Y(\gamma^{\ell-1},\bar{b}^{\zeta}) - C(\gamma^{\ell-1},\bar{b}^{\zeta}) = Y(\gamma^{\ell-1},\bar{b}^{\zeta}) - C(\gamma^{\ell-1},\bar{b}^{\zeta}) \leq 0$. This in turn would give $Y(\hat{w},\bar{b}^{\zeta})-C(\hat{w},\bar{b}^{\zeta}) \leq - x(\hat{w}) \leq 0$.\footnote{In order to minimize technicalities, we only cover the first case separated by ``or" in \eqref{eq:M-def}; when the second case holds, we would have $\gamma^{\ell-1} \in (\underline{b}^{\zeta}, \bar{b}^{\zeta})$ giving strict inequality.} However, referring to \eqref{eq:M-def}, this would imply $\myk_{\alpha}(\ell) \in \{\mykk_1, \ldots, \mykk_m\}$, giving a contradiction. This proves the first claim in the proposition. 

In order to prove \eqref{eq:alpha-exact-expr}, observe that if $\alpha^{\ell}=\underline{w}^{\mykk_j}$ for some $\ell \in \until{L-1}$, then \eqref{eq:M-def} implies $\alpha^{\ell+1}=\underline{w}^{\mykk_{j+1}}$. 
If we assume that $\myk_{\alpha}(L)=\mykk_{m_1}$ with  $m_1<m$, then \eqref{eq:M-def} implies that $\underline{w}^{\mykk_{m_1+1}} > \alpha^L$ is a point where the queue length transitions from being zero to being positive, giving a contradiction. Therefore, $\alpha^L=\underline{w}^{\mykk_m}$.
\end{proof}

Since $L$ is not known, \eqref{eq:alpha-exact-expr} can not be used to compute all $\alpha_i^{\ell}$, $\ell \in \until{L}$. However, \eqref{eq:alpha-exact-expr} readily gives
\begin{equation}
\label{eq:alphaL}
\alpha_i^{L}=\underline{w}_i^{\mykk_m}
\end{equation}
from which $x^*(t)$, $t \in [0,T]$, can then be computed as explained in Remark~\ref{rem:transition-point} (iii). In order to execute this last step, it is more convenient to use:
\begin{equation}
\label{eq:xzero-def}
x^*_i(0)=\left[Y_i(\alpha_i^L,T)-C_i(\alpha_i^L,T)\right]^+
\end{equation}
which is obtained by integrating \eqref{eq:flow-model-main} from $\alpha_i^L$ to $T$, and recalling \eqref{eq:M-def} for the definition of $\mykk_m$.
The entire procedure is summarized in Algorithm~\ref{alg:steady-state-computation-single-link}. 

\begin{algorithm}[htb!]
\Input {$T$- periodic inflow function $\lambda_i(t)$ and periodic capacity function $c_i(t)$}
\textbf{initialization}:  $y_i(t)=\lambda_i(t)$, $t \in [0,T]$\;
compute $\alpha_i^{L}$ from \eqref{eq:alphaL} and $x_i^*(0)$ from \eqref{eq:xzero-def}\;
compute $x_i^*(t)$, $t \in [0,T]$, by simulation of \eqref{eq:flow-model-main} with initial condition $x_i^*(0)$; compute $z_i^*(t)$, $t \in [0,T]$, from \eqref{eq:z-star-def}\; 
\caption{ Computation of $(x_i^*,z_i^*)$ for isolated link $i$}
\label{alg:steady-state-computation-single-link}
\end{algorithm}

Let the relationship between $(x_i^*,z_i^*)$ and $(y_i,c_i)$, as determined by Algorithm~\ref{alg:steady-state-computation-single-link}, be denoted by $x^*_i=\mc F_x(y_i,c_i)$ and $z^*_i=\mc F_z(y_i,c_i)$ respectively. These notations will be used in extending the procedure to compute steady state for the network. 

\section{Steady State Computation For a Network}
\label{sec:steady-state-network}
Algorithm~\ref{alg:steady-state-computation-network} formally describes the steps to compute steady-state for a general network. The number of iterations in the while loop in Algorithm~\ref{alg:steady-state-computation-network} is determined by a \emph{termination criterion}. While one could explicitly specify the number of iterations for termination criterion, a better criterion can be formulated as follows. For $i \in \mc E$, let $\bar{z}_i^*:=\frac{1}{T} \int_0^T z_i^*(t)$ be the average outflow from link $i$ at steady-state. Integrating \eqref{eq:flow-model-main} over $[0,T]$ at steady state, we get that $0=\bar{\lambda}=R^T \bar{z}^* - \bar{z}^*$, where we use notation from \eqref{eq:average-quantity-notations}. This then gives $\bar{z}^*=(I-R^T)^{-1} \bar{\lambda}$ (cf. Remark~\ref{ass:routing-matrix} for invertibility of $I-R^T$). Therefore, considering monotonicity of the iterates $z^{(k)}$ of Algorithm~\ref{alg:steady-state-computation-network} as established in Proposition~\ref{prop:outflow-update-monotonicity}, and letting $\bar{z}_i^{(k)}:=\frac{1}{T} \int_0^T z_i^{(k)}(t) \, dt$, a termination criterion could be $\max_{i \in \mc E} \left(\bar{z}^*_i-\bar{z}_i^{k} \right) \leq \epsilon$, for a specified $\epsilon>0$.

\begin{algorithm}[htb!]
\Input {periodic inflow functions $\lambda_i(t)$ and periodic capacity functions $c_i(t)$, $i \in \mc E$}
\textbf{initialization}:  $k=1$; $y_i^{(1)}(t)=\lambda_i(t)$, $t \in [0,T]$,  for all $i \in \mc E$\;
\While{termination criterion is not met}{
for all $i \in \mc E$: \\
\qquad compute $x_i^{(k)}=\mc F_x(y_i^{(k)},c_i)$ and $z_i^{(k)}=\mc F_z(y_i^{(k)},c_i)$ from Algorithm~\ref{alg:steady-state-computation-single-link}
\;
\qquad compute $y_i^{k+1}(t)=\lambda_i(t) + \sum_{j \in \mc E} R_{ji} z_i^{(k)}(t-\delta_{ji})$, $i \in \mc E$ \;
$k=k+1$\;
   }
\caption{ Computation of $(x_i^*,z_i^*)$, $i \in \mc E$}
\label{alg:steady-state-computation-network}
\end{algorithm}


\begin{proposition}
\label{prop:outflow-update-monotonicity}
Consider a network with $T$-periodic external inflows $\lambda(t)$ and $T$-periodic capacity functions $c(t)$. The link outflows computed by Algorithm~\ref{alg:steady-state-computation-network} satisfy the following for all $k$: $z^{(k+1)}(t) \geq z^{(k)}(t)$ and $x^{(k+1)}(t) \geq x^{(k)}(t)$ for all $t \in [0,T]$.
\end{proposition}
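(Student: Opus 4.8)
The plan is to reduce everything to a single monotonicity property of the isolated-link steady-state map and then close the argument by a monotone-iteration induction on $k$. The induction is routine once one has the following \emph{comparison lemma}: if $y_i \le \tilde y_i$ pointwise on $[0,T]$ for a common capacity $c_i$ (with both inflows piecewise constant and satisfying the single-link stability condition), then $\mc F_x(y_i,c_i) \le \mc F_x(\tilde y_i,c_i)$ and $\mc F_z(y_i,c_i) \le \mc F_z(\tilde y_i,c_i)$ pointwise. Granting this, I note that the inflow update $y_i^{(k+1)}(t) = \lambda_i(t) + \sum_{j} R_{ji} z_j^{(k)}(t-\delta_{ji})$ in Algorithm~\ref{alg:steady-state-computation-network} is order-preserving in $z^{(k)}$, since $R \ge 0$.

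For the induction: because outflows are nonnegative, $y_i^{(2)} = \lambda_i + \sum_j R_{ji} z_j^{(1)}(\cdot-\delta_{ji}) \ge \lambda_i = y_i^{(1)}$, so $y^{(2)} \ge y^{(1)}$. Assuming $y^{(k+1)} \ge y^{(k)}$, the comparison lemma gives $z^{(k+1)} = \mc F_z(y^{(k+1)},c) \ge \mc F_z(y^{(k)},c) = z^{(k)}$ and $x^{(k+1)} \ge x^{(k)}$ (the claimed inequalities), after which monotonicity of the update yields $y^{(k+2)} \ge y^{(k+1)}$, closing the induction. I would also verify that the lemma is applicable at every step: since $z_j^{(k)} \le c_j$ pointwise, the period-averaged inflow obeys $\bar y_i^{(k)} \le \bar\lambda_i + \sum_j R_{ji}\bar c_j < \bar c_i$ by the stability condition of Definition~\ref{def:stability}, so each single-link subproblem is stable and Theorem~\ref{thm:globally-attractive-existence} applies to it, while piecewise-constancy is preserved through the update.

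The main obstacle is the comparison lemma, which I would prove in two stages. First, I run the isolated-link dynamics \eqref{eq:flow-model-main} (where, by Proposition~\ref{prop:solution-properties}, $z_i = c_i$ when $x_i>0$ and $z_i = \min\{c_i,y_i\}$ when $x_i=0$) under inflows $y_i$ and $\tilde y_i$ from a common initial condition $x_i(0)=\tilde x_i(0)$, and show $x_i(t)\le\tilde x_i(t)$ for all $t\ge 0$ by quasi-monotonicity: if $d:=\tilde x_i - x_i$ were to become negative, then at the last instant $t_0$ with $d(t_0)=0$ before this happens, $x_i>0$ on the ensuing interval forces $\dot x_i = y_i - c_i$, while $\dot{\tilde x}_i \ge \tilde y_i - c_i$ irrespective of the sign of $\tilde x_i$ (using $\dot{\tilde x}_i = [\tilde y_i - c_i]^+ \ge \tilde y_i - c_i$ when $\tilde x_i=0$); hence $\dot d \ge \tilde y_i - y_i \ge 0$, contradicting $d$ turning negative. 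This step is where care is needed, since the dynamics is discontinuous in the state and a textbook ODE comparison theorem does not apply directly; piecewise-constancy (Assumption~\ref{ass:piece-wise-constant}) makes the trajectories piecewise affine and absolutely continuous, legitimizing the integration $d(t_1)=d(t_0)+\int_{t_0}^{t_1}\dot d\,dt \ge 0$. Second, I invoke global attractivity from Theorem~\ref{thm:globally-attractive-existence}: both trajectories converge to their periodic orbits $\mc F_x(y_i,c_i)$ and $\mc F_x(\tilde y_i,c_i)$, so passing $x_i(nT+\tau)\le\tilde x_i(nT+\tau)$ to the limit $n\to\infty$ transfers the ordering to the steady states.

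The outflow ordering then follows from the queue ordering. Using that the steady-state outflow equals $\min\{c_i,y_i\}$ on $\{x_i^*=0\}$ and $c_i$ on $\{x_i^*>0\}$, it is jointly nondecreasing in the inflow and in the queue-positivity status. Hence on $\{x_i^*>0\}$ (where $\tilde x_i^*>0$ as well) both outflows equal $c_i$, while on $\{x_i^*=0\}$ we have $\mc F_z(y_i,c_i)=\min\{c_i,y_i\}\le\min\{c_i,\tilde y_i\}\le\mc F_z(\tilde y_i,c_i)$, the last inequality holding whether $\tilde x_i^*=0$ or $\tilde x_i^*>0$. This gives $\mc F_z(y_i,c_i)\le\mc F_z(\tilde y_i,c_i)$ pointwise, completing the lemma and hence the proposition.
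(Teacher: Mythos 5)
Your proposal is correct and follows essentially the same route as the paper: the same induction on $k$ (base case from nonnegativity of outflows, inductive step from order-preservation of the inflow update since $R \geq 0$), reduced to exactly the paper's Corollary~\ref{cor:input-output-monotonicity}, which the paper likewise proves by combining a trajectory-comparison result (its Lemma~\ref{lem:monotonicity}) with global attractivity from Theorem~\ref{thm:globally-attractive-existence}. Your self-contained first-crossing proof of the scalar comparison lemma, and your explicit check that each single-link subproblem inherits the stability condition of Definition~\ref{def:stability} (a point the paper leaves implicit), are both sound.
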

\begin{proof}
We prove by induction. Algorithm~\ref{alg:steady-state-computation-network} implies that, for all $i \in \mc E$, $y_i^{(2)}(t)=\lambda_i(t)+\sum_j R_{ji} z_j^{(1)}(t-\delta_{ji}) \geq \lambda_i(t) = y_i^{(1)}(t)$. Therefore, Corollary~\ref{cor:input-output-monotonicity} (in Appendix~\ref{sec:monotonicity}) implies that $x^{(2)}(t) \geq x^{(1)}(t)$ and $z^{(2)}(t) \geq z^{(1)}(t)$, and hence $z^{(2)}(t-\delta) \geq z^{(1)}(t-\delta)$ for all $i \in \mc E$, $t \in [0,T]$, $\delta \geq 0$. 

Assume that $z^{(k)}(t-\delta) \geq z^{(k-1)}(t-\delta)$ for all $k=2, \ldots, \bar{k}$. Since $y_i^{(\bar{k}+1)}(t)=\lambda_i(t)+\sum_j R_{ji} z_j^{(\bar{k})}(t-\delta_{ji}) \geq \lambda_i(t) +\sum_j R_{ji} z_j^{(\bar{k}-1)}(t-\delta_{ji}) = y_i^{(\bar{k})}(t)$, Corollary~\ref{cor:input-output-monotonicity} implies that $x_i^{(\bar{k}+1)}(t) \geq x_i^{(\bar{k})}(t)$ and $z_i^{(\bar{k}+1)}(t-\delta) \geq z_i^{(\bar{k})}(t-\delta)$ for all $i \in \mc E$, $t \in [0,T]$, $\delta \geq 0$. 
\end{proof}

$z_i^{(k)}(t) \leq c_i(t)$ for all $k$. An upper bound on $x^{(k)}$ can be shown along similar lines as Lemma~\ref{lem:boundedness} (in Appendix~\ref{sec:attractivity}). Combining this with monotonicity from Proposition~\ref{prop:outflow-update-monotonicity} implies that $(x^{(k)},z^{(k)})$ converges to $(\hat{x},\hat{z})$. Periodicity of $(x^{(k)},z^{(k)})$ for every $k$ implies periodicity of $(\hat{x},\hat{z})$.
It is easy to see from the construction of Algorithm~\ref{alg:steady-state-computation-network} that, for every iteration $k$:
$
\dot{x}^{(k)}_i(t) = \lambda_i(t) + \sum_{j \in \mc E} R_{ji} z_j^{(k-1)}(t-\delta_{ji}) - z_i^{(k)}(t)
$ 
for all $i \in \mc E$. Therefore, for any $t  \geq 0$:
$$
0 = x_i^{(k)}(t+T) - x_i^{(k)}(t) = \int_t^{t+T} \left(\lambda_i(s) + \sum_{j \in \mc E} R_{ji} z_j^{(k-1)}(s-\delta_{ji}) - z_i^{(k)}(s) \right) \, ds, \qquad i \in \mc E
$$
where the first equality follows from periodicity of $x^{(k)}$ by construction. Therefore, taking the limit as $k \to + \infty$, we get that, for all $t \geq 0$: 
$$
0 = \hat{x}_i(t+T) - \hat{x}_i(t) = \int_t^{t+T} \left(\lambda_i(s) + \sum_{j \in \mc E} R_{ji} \hat{z}_j(s-\delta_{ji}) - \hat{z}_i(s) \right) \, ds, \qquad i \in \mc E
$$
This implies that $(\hat{x},\hat{z})$ is a periodic orbit for \eqref{eq:flow-dynamics}. The uniqueness result in Theorem~\ref{thm:globally-attractive-existence} then implies that it is indeed the object to be computed. 

\begin{remark}
Algorithm~\ref{alg:steady-state-computation-network} naturally lends itself to a distributed implementation: during an iteration, all the links independently update their respective $(x_i^{(k)},z_i^{(k)})$, and at the end of the iteration, each link transmits its updated $z_i^{(k)}$ to its immediately downstream links.
\end{remark}

\section{Simulations}
\label{sec:simulations}

In this section, we report simulation results in two parts. In Section~\ref{subsec:matlab}, we illustrate consistency between teady-state computations from Algorithm~\ref{alg:steady-state-computation-network} and the steady-state obtained from direct simulations in MATLAB, on a synthetic network. In Section~\ref{subsec:vissim}, we report comparison between steady-state computations from Algorithm~\ref{alg:steady-state-computation-network} with the output from PTV VISSIM, a well-known microscopic traffic simulator, for a sub-network in downtown Los Angeles. 

\subsection{MATLAB simulations}
\label{subsec:matlab}
\begin{figure}[htb!]
\begin{center}
\includegraphics[width=1.95 in]{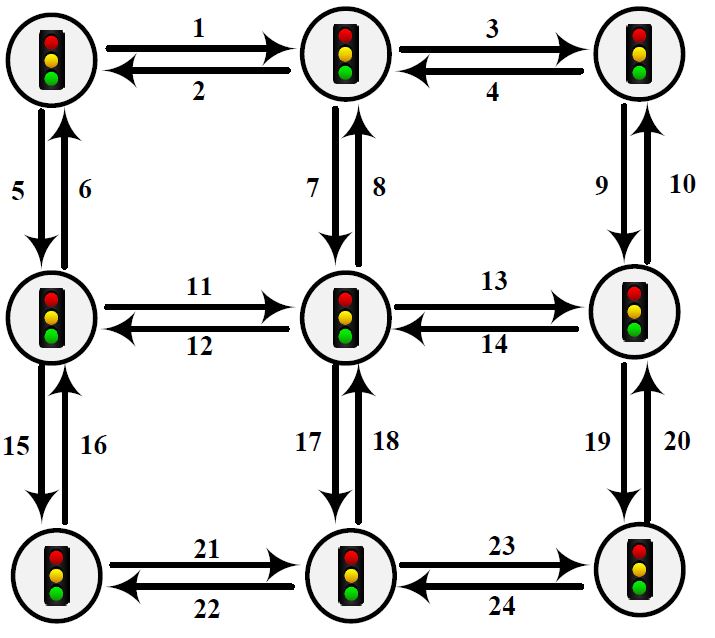}
\caption{\small \sf Graph topology of the network used in the simulations.}
\vspace{-0.3in}
\label{fig-network}
\end{center}
\end{figure}
The graph topology of the network used for simulations is shown in Figure~\ref{fig-network}. All intersections have common cycle time of $T=20$. The external inflows are constant $\lambda_i(t) \equiv \lambda_i$, $i \in \mc E$, and the values are provided in Table \ref{table:inflow}. The capacity functions are of the form: $c_i(t)=c_i^{\text{max}}$ if $t \in [\theta_i,\theta_i+g_i]$ and zero otherwise. The values of $c_i^{\text{max}}$, $\theta_i$ and $g_i$, which can be interpreted as saturation capacity, offset and green split are given in Table \ref{table:capacity}.

\begin{table}[htb!]
\centering
\begin{tabular}{|c|c|c|c|c|c|c|c|c|c|c|c|c|l|} \hline
Link ID ($i$)& 1 & 2 & 3 & 4& 5 & 6 & 7 & 8 & 9 & 10& 11 & 12\\ \hline
$\lambda_i$& 1.70  &  2.27  & 4.35   & 3.11   & 9.23    &  4.30 & 1.84 &  9.04  & 9.79  & 4.38& 1.11   &  2.58 \\ \hline   \hline
Link ID ($i$)& 13 & 14 & 15 & 16& 17 & 18 & 19 & 20 & 21& 22 & 23 & 24\\ \hline
$\lambda_i$& 4.08  &5.94& 2.62&  6.02 & 7.11 & 2.21 &1.17 &2.96 & 3.18  &4.24  & 5.07 &  0.85 \\ \hline   
\end{tabular}
\caption{\small \sf External inflow values.}
\label{table:inflow}
\end{table}


\begin{table}[htb!]
\centering
\begin{tabular}{|c|c|c|c|c|c|c|c|c|c|c|c|c|l|} \hline
Link ID ($i$)& 1 & 2 & 3 & 4& 5 & 6 & 7 & 8 & 9 & 10& 11 & 12\\ \hline
$c_i^{\text{max}}$&47.81 & 20.34 & 147.74 & 212.15& 363.33 &1192.03 &362.82 &67.30& 706.05 & 69.93& 142.51 &114.89\\ \hline
$\theta_i$& 16.02&5.24&0.58&1.49&18.57&3.47&14.60&5.05&9.77&7.01&15.88&4.72 \\ \hline
$g_i$& 5.47&18.22&6.42&3.56&6.15&1.77&1.27&10.96&1.78&13.57&8.14&8.92  \\ \hline   \hline

Link ID ($i$)& 13 & 14 & 15 & 16& 17 & 18 & 19 & 20 & 21& 22 & 23 & 24\\ \hline
$c_i^{\text{max}}$&48.06&154.75&134.76&279.76&98.60&131.25&94.35&98.10&398.25&94.87&107.44&176.15 \\ \hline
$\theta_i$& 11.56&4.02&11.57&13.65&4.74&9.45&9.17&2.09&15.94&19.69&17.20&18.05\\ \hline
$g_i$& 10.53&5.43&8.12&4.92&11.19&5.15&8.02&7.67&2.11&11.87&11.97&6.68 \\ \hline  
 
\end{tabular}
\caption{\small \sf Parameters of link capacity functions.}
\label{table:capacity}
\end{table}

These values of $\lambda(t)$ and $c(t)$ satisfy the stability condition in Definition~\ref{def:stability}. 
The routing matrix is chosen to be: 
\newcommand\scalemath[2]{\scalebox{#1}{\mbox{\ensuremath{\displaystyle #2}}}}
\[
R  =  \left(
    \scalemath{0.45}{
    \begin{array}{cccccccccccccccccccccccccccccccc}
     0 & 0.44 & 0.23 & 0 & 0 & 0   &  0.23 & 0 & 0 & 0 & 0 & 0     &  0 & 0 & 0 & 0 & 0 & 0     &  0 & 0 & 0 & 0 & 0 & 0   \\
     0.33& 0 & 0 & 0 & 0.57 & 0   &  0 & 0 & 0 & 0 & 0 & 0     &  0 & 0 & 0 & 0 & 0 & 0     &  0 & 0 & 0 & 0 & 0 & 0 \\
      0 & 0 & 0 & 0.34 & 0 & 0   &  0 & 0 & 0.56 & 0 & 0 & 0     &  0 & 0 & 0 & 0 & 0 & 0     &  0 & 0 & 0 & 0 & 0 & 0  \\
      0 & 0.19 & 0.5 & 0 & 0 & 0   &  0 & 0.21 & 0 & 0 & 0 & 0     &  0 & 0 & 0 & 0 & 0 & 0     &  0 & 0 & 0 & 0 & 0 & 0  \\
      
           0 & 0 & 0 & 0 & 0 & 0.34   &  0 & 0 & 0 & 0 & 0.35 & 0     &  0 & 0 & 0.21 & 0 & 0 & 0     &  0 & 0 & 0 & 0 & 0 & 0   \\
     0.05 & 0 & 0 & 0 & 0.85 & 0   &  0 & 0 & 0 & 0 & 0 & 0     &  0 & 0 & 0 & 0 & 0 & 0     &  0 & 0 & 0 & 0 & 0 & 0 \\
      0 & 0 & 0 & 0 & 0 & 0   &  0 & 0.06 & 0 & 0 & 0 & 0.27     &  0.3 & 0 & 0 & 0 & 0.27 & 0     &  0 & 0 & 0 & 0 & 0 & 0  \\
      0 & 0.08 & 0.55 & 0 & 0 & 0   &  0.27 & 0 & 0 & 0 & 0 & 0     &  0 & 0 & 0 & 0 & 0 & 0     &  0 & 0 & 0 & 0 & 0 & 0  \\

           0 & 0 & 0 & 0 & 0 & 0   &  0 & 0 & 0 & 0.41 & 0 & 0     &  0 & 0.26 & 0 & 0 & 0 & 0     &  0.23 & 0 & 0 & 0 & 0 & 0   \\
     0	&0	&0&	0.38&	0&	0&	0&	0&	0.52&	0&	0&	0&	0&	0&	0&	0&	0&	0&	0&	0	&0	&0&	0&	0 \\
      0	&0&	0&	0&	0&	0&	0&	0.2&	0	&0	&0	&0.24&	0.13&	0&	0&	0&	0.33 &	0&	0&	0&	0&	0&	0&	0  \\
0&	0&	0&	0&	0&	0.08&	0&	0&	0&	0&	0.33&	0&	0&	0&	0.49&	0&	0&	0&	0&	0&	0&	0&	0&	0 \\

          0&	0&	0&	0&	0&	0&	0&	0&	0&	0.32&	0&	0&	0&	0.22&	0&	0&	0&	0&	0.36&	0&	0&	0&	0&	0   \\
    0&	0&	0&	0&	0&	0&	0&	0.12&	0&	0&	0&	0.43&	0.12&	0&	0&	0&	0.23&	0&	0&	0&	0&	0&	0&	0 \\
    0&	0&	0&	0&	0&	0&	0&	0&	0&	0&	0&	0&	0&	0&	0&	0.57&	0&	0&	0&	0&	0.33&	0&	0&	0  \\
    0&	0&	0&	0&	0&	0.84&	0&	0&	0&	0&	0.02&	0&	0&	0&	0.04&	0&	0&	0&	0&	0&	0&	0&	0&	0 \\
      
     0&	0&	0&	0&	0&	0&	0&	0&	0&	0&	0&	0&	0&	0&	0&	0&	0&	0.15&	0&	0&	0&	0.39&	0.36&	0   \\
   0&	0&	0&	0&	0&	0&	0&	0.23&	0&	0&	0&	0.29&	0.03&	0&	0&	0&	0.34&	0&	0&	0&	0&	0&	0&	0 \\
      0&	0&	0&	0&	0&	0&	0&	0&	0&	0&	0&	0&	0&	0&	0&	0&	0&	0&	0&	0.19&	0&	0&	0&	0.71   \\
    0&	0&	0&	0&	0&	0&	0&	0&	0&	0.22&	0&	0&	0&	0.35&	0&	0&	0&	0&	0.33&	0&	0&	0&	0&	0  \\
      
           0&	0&	0&	0&	0&	0&	0&	0&	0&	0&	0&	0&	0&	0&	0&	0&	0&	0.28&	0&	0&	0&	0.32&	0.3&	0   \\
     0&	0&	0&	0&	0&	0&	0&	0&	0&	0&	0&	0&	0&	0&	0&	0.54&	0&	0&	0&	0&	0.36&	0&	0&	0 \\
      0&	0&	0&	0&	0&	0&	0&	0&	0&	0&	0&	0&	0&	0&	0&	0&	0&	0&	0&	0.42&	0&	0&	0&	0.48   \\
     0&	0&	0&	0&	0&	0&	0&	0&	0&	0&	0&	0&	0&	0&	0&	0&	0&	0.19&	0&	0&	0&	0.28&	0.43&	0\\
      
    \end{array}
    }
  \right)
\]

While the entries of $R$ are chosen arbitrarily, the sum of entries on each row is $0.9<1$. This combined with the fact that the network shown in Figure~\ref{fig-network} is weakly connected, Assumption~\ref{ass:connectivity} is satisfied. The inter-link travel times are all taken to be zero, i.e., $\bar{\delta}=0$. 

Figure~\ref{fig-rmse} shows the root mean squared error (RMSE) between the $(x^*,z^*)$ obtained by direct simulation of \eqref{eq:flow-model-main}, with initial condition $x_i(0)=10$ for all $i \in \mc E$, over a sufficiently long time horizon, and the output $(x^{(k)},z^{(k)})$ of Algorithm~\ref{alg:steady-state-computation-network} during various iterations. The RMSE between $x_i^{(k)}$ and $x_i^*$ is defined as $\sqrt  {\frac{ \int_{t=0}^{T} { (x_i^{(k)}(t)-x^*_i(t))^2 } \, dt}{ T } }$. The RMSE between $z_i^{(k)}$ and $z_i^*$ is defined similarly. The monotonically decreasing RMSE in Figure~\ref{fig-rmse} (a) and (b) illustrates the monotonic convergence of the iterates of Algorithm~\ref{alg:steady-state-computation-network} to the desired periodic orbit $(x^*,z^*)$. Figure~\ref{fig-rmse} (c) illustrates uniform monotone convergence of $x_i^{(k)}$ to $x_i^*$, as stated in Proposition~\ref{prop:outflow-update-monotonicity},  for a sample link.

\begin{figure}
\begin{center}

\includegraphics[width=1.9 in]{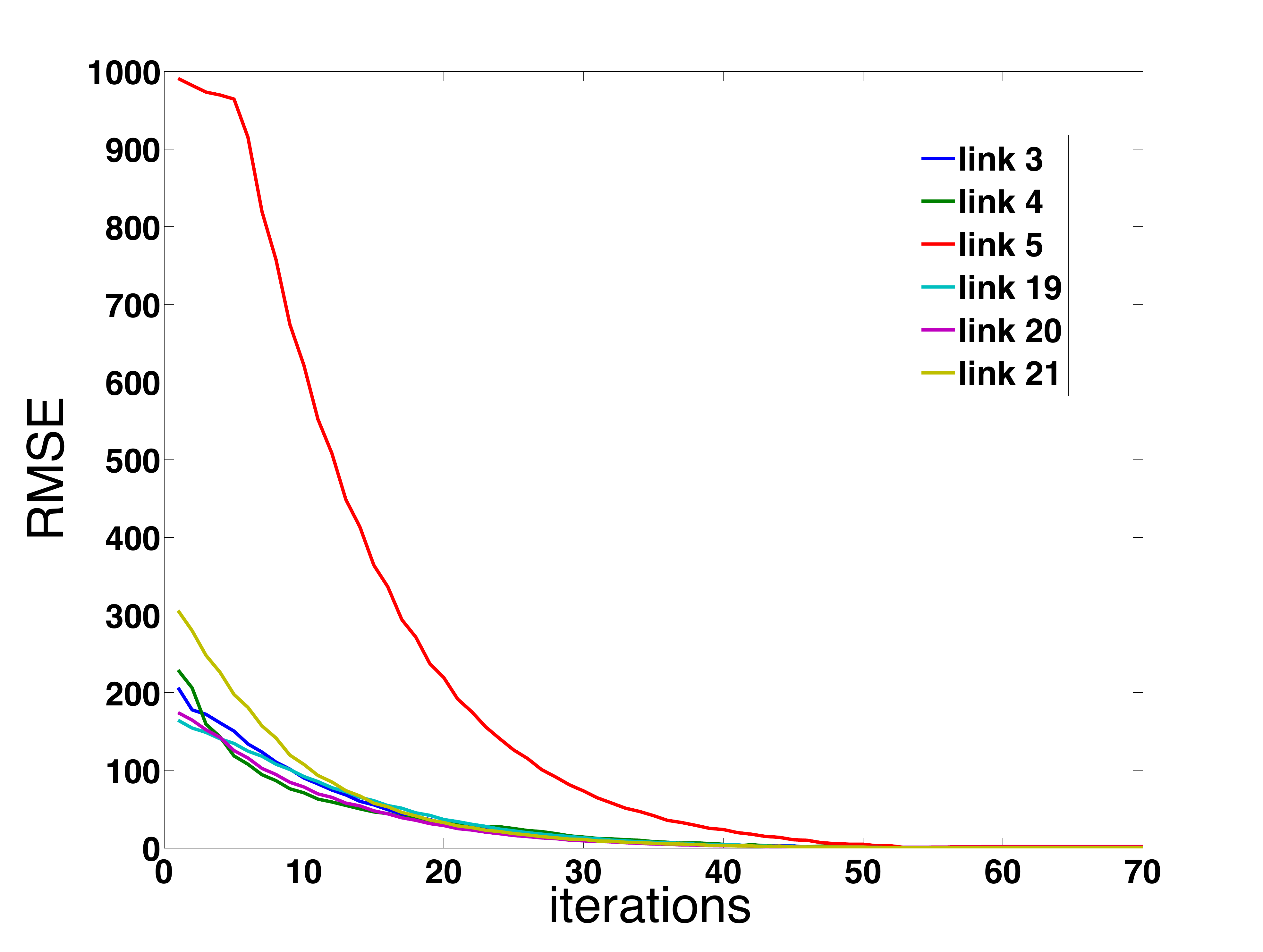} 
\hspace{0.1in}
\includegraphics[width=1.83 in]{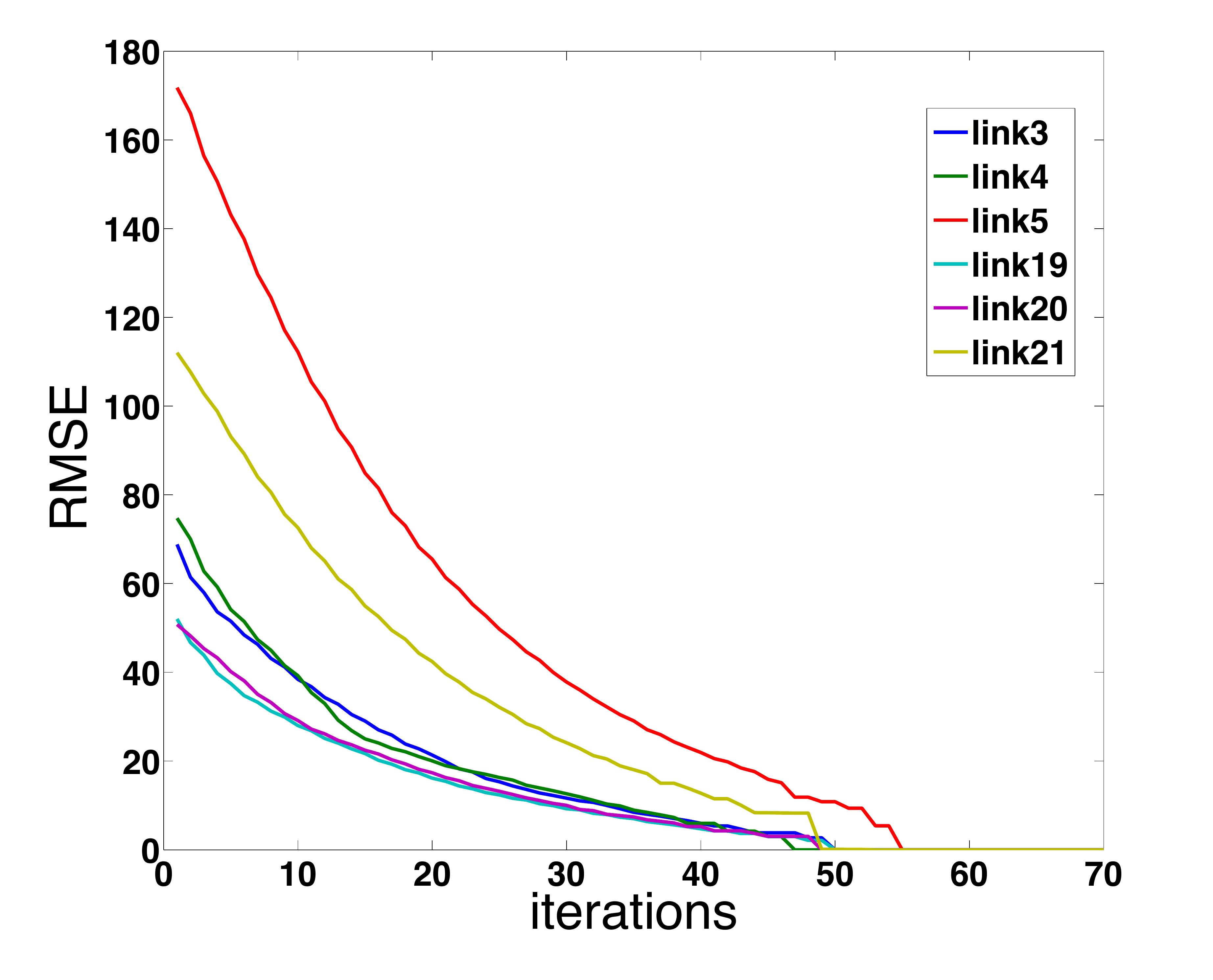} 
\hspace{0.1in}
\includegraphics[width=2.39 in]{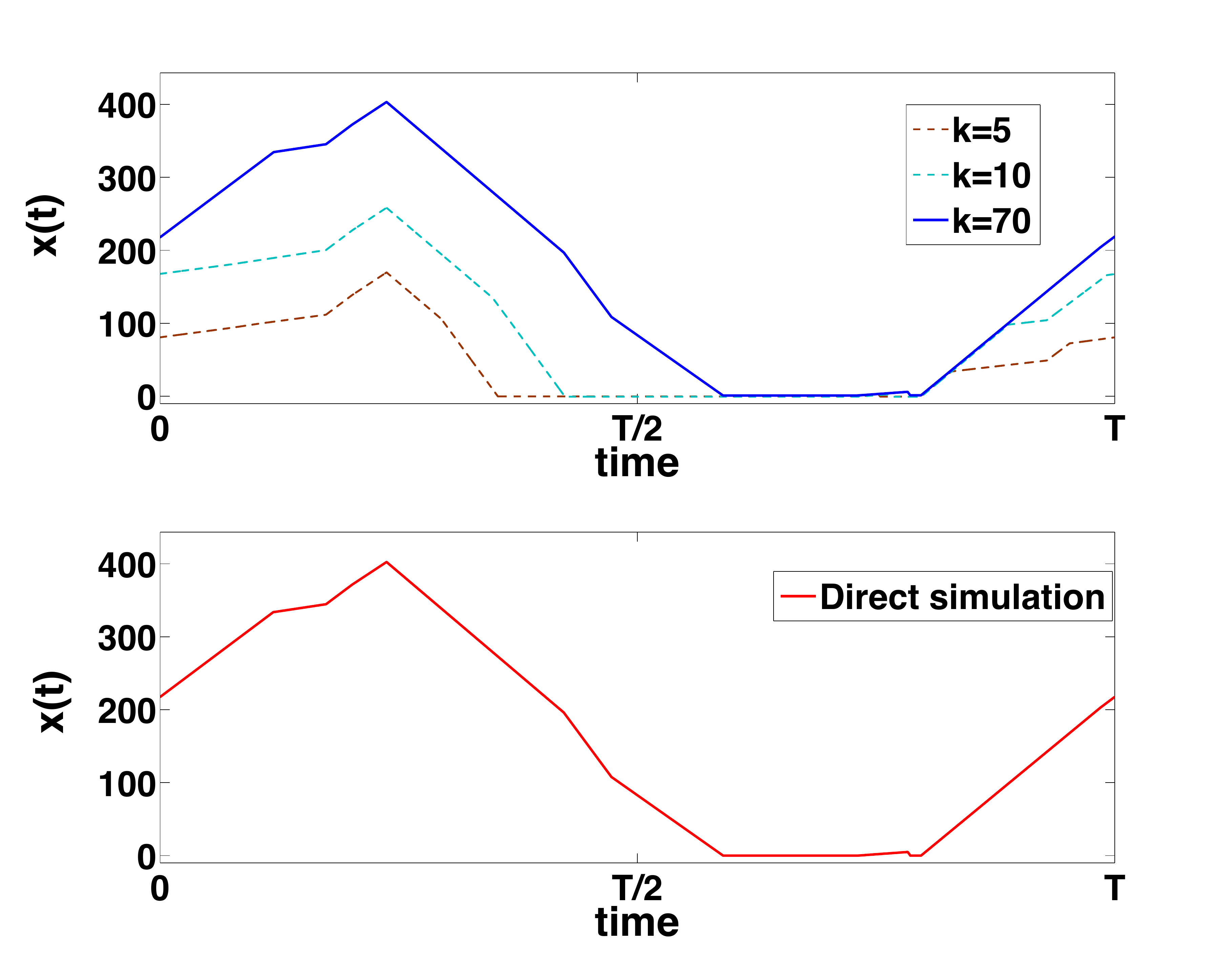} \\
(a) \hspace{1.85in}
(b) \hspace{1.85in}
(c)
\caption{\small \sf  Evolution of RMSE between (a) $x^{(k)}$ and $x^*$, and (b) $z^{(k)}$ and $z^*$ for a few representative links. (c) (top) $x_i^{(k)}$ from a few representative iterations and (bottom) $x_i^*$, both for $i=17$.
%
}
\vspace{-0.3in}
\label{fig-rmse}
\end{center}
\end{figure}



\subsection{VISSIM simulations}
\label{subsec:vissim}


\begin{figure}
\begin{center}
\includegraphics[width=2.45 in]{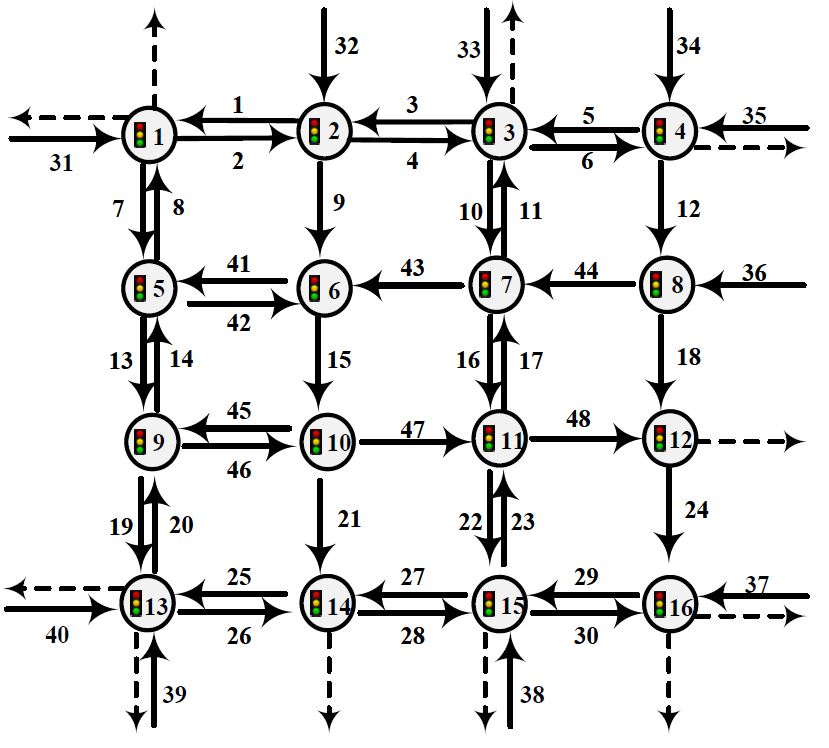}
\hspace{0.1in}
\includegraphics[width=2.05 in]{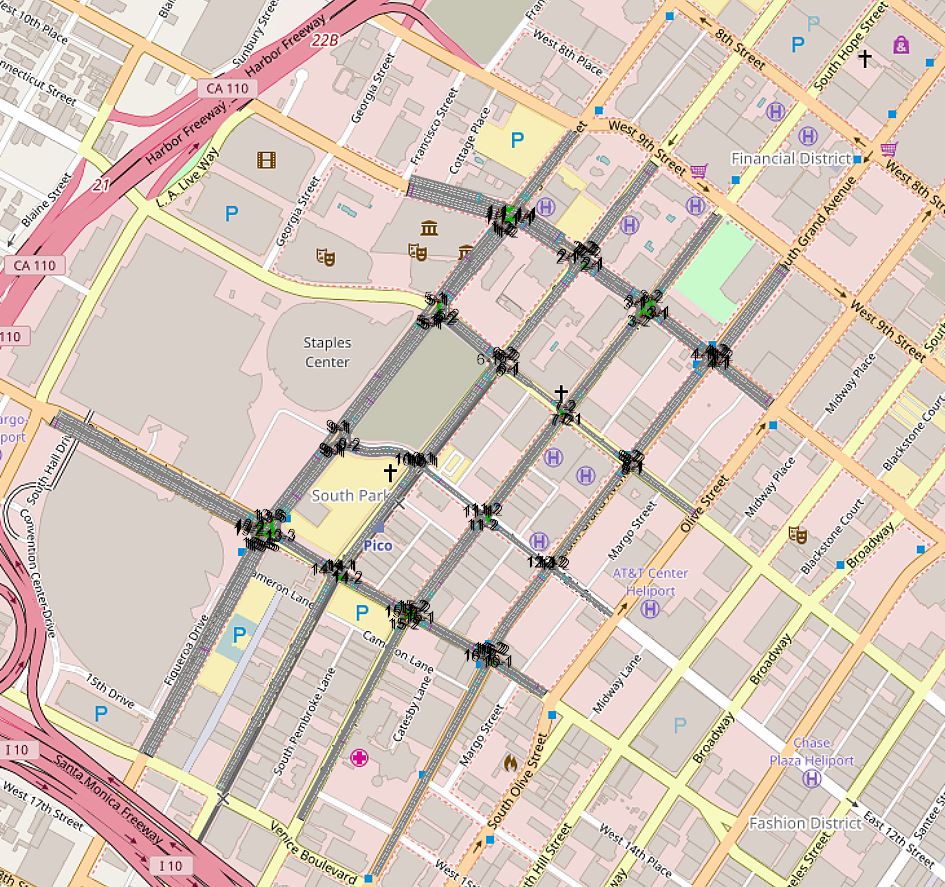} \\
\hspace{1.49 in}
(a) \hspace{2.1in}
(b) \hspace{1.85in}
\caption{\small \sf  The Los Angeles downtown sub-network used in the simulations: (a) graph topology
(b) map view.
%
}
\vspace{-0.3in}
\label{fig-LA-network}
\end{center}
\end{figure}


In this section, we report comparison between steady-state computations from Algorithm~\ref{alg:steady-state-computation-network} with the output from PTV VISSIM for the downtown Los Angeles sub-network shown in Figure~\ref{fig-LA-network}.

All the intersections have common cycle time of $T=90$ second. Referring to the notations in Section~\ref{sec:problem-formulation}, the values of offsets and green times for various capacity functions were obtained from Los Angeles Department of Transportation (LADOT) signal timing sheets, and are reported in Table~\ref{table:la-capacity}. 

The values of saturated flow capacities, denoted as $c^{\text{max}}$ in Table~\ref{table:la-capacity}, are based on the values commonly reported in the literature, e.g., \cite{Papacostas.Prevedouros:01}: 1800 vehicle/hour/lane for through movement, 1600 vehicle/hour/lane for right-turn and left-turn movements, 1200 vehicle/hour/lane for lanes that are shared between through movement and left/right turn and 960 vehicle/hour/lane for permissive left turns. For each link, the total saturated flow capacity is computed by adding up the capacities of all the lanes associated with it; see Figure~\ref{fig:movements} for an illustration. The resulting values for all the links are reported in Table~\ref{table:la-capacity}.
The external inflows $\lambda(t) \equiv \lambda$ are taken to be non-zero only on the boundary links. These values, which are estimated from loop detector data during weekday PM peak hour (between 4pm to 6pm) from May 1 to May 31, 2013, are reported in Table \ref{table:la-inflow}.
%



\begin{figure}[htb!]
\begin{minipage}[c]{.99\textwidth}
\begin{center}
\includegraphics[width=2.35 in]{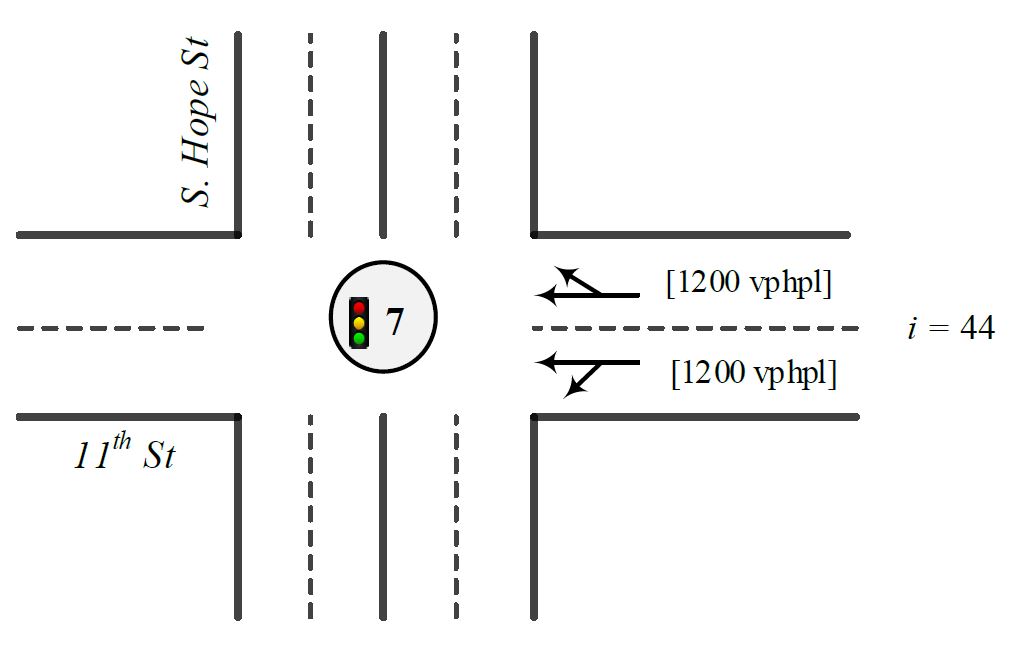} 
\end{center}
\end{minipage}
\caption{\small \sf Illustration of \emph{movements}, and \emph{lanes} on link 44 at intersection number 7 in the network shown in Figure~\ref{fig-LA-network}. Link number 44 contains two lanes: one lane supports through+right movements, and the second lane supports through+left movements. Therefore, the saturation capacity of link 44 is $c_4^{\text{max}}=1200+1200=2400 \text{ veh/hour}$, as also noted in Table~\ref{table:la-capacity}.} 
\label{fig:movements}
\end{figure}

For every link, the turn ratios are chosen to be proportional to the number of lanes dedicated to each movement. For example, for link 44 (cf. Figure~\ref{fig:movements}),  the ratios are 0.5, 0.25 and 0.25 for through movement, right turn and left turn, respectively.
As a result, the sum of entries of rows of $R$ associated with links which have downstream exit links, shown in dashed arrow in Figure~\ref{fig-LA-network}, is strictly less than one. An example is link 8. On the other hand, for links with no downstream exit links, e.g., link 44, the entries of the corresponding row in $R$ add up to be equal to one.
Combining this with the fact that the network shown in Figure~\ref{fig-LA-network} (a) is weakly connected, Assumption~\ref{ass:connectivity} is satisfied in this case.
Moreover, matrix $R$ and values in Tables~\ref{table:la-capacity} and \ref{table:la-inflow} satisfy the stability condition in Definition~\ref{def:stability}. 
The link travel times $\bar{\delta_i}$, $i \in \mc E$, are constant and equal to free-flow travel time, i.e., the link length divided by the link speed limit. These values are presented in Table \ref{table:la-travel-time}.  

\begin{table}[htb!]
\centering
\begin{tabular}{|c|c|c|c|c|c|c|c|c|c|c|c|c|l|} \hline
Link ID ($i$)& 1 & 2 & 3 & 4& 5 & 6 & 7 & 8 & 9 & 10& 11 & 12\\ \hline
$c_i^{\text{max}}$ (veh/hour)&7860&5100&4560&3960&3960&3000&4560&8500&6600&3000&2400&7000\\ \hline
$\theta_i$ (sec)&88&77&77&73&73&55&38&50&1&31&31&50 \\ \hline
$g_i$ (sec)&52&43&43&48&48&48&53&36&44&44&40&49  \\ \hline   \hline

Link ID ($i$)& 13 & 14 & 15 & 16& 17 & 18 & 19 & 20 & 21& 22 & 23 & 24\\ \hline
$c_i^{\text{max}}$ (veh/hour)& 4560&6600&6400&3000&3000&4800&3360&6600&4200&4260&3000&6300  \\ \hline
$\theta_i$ (sec)&16&38&65&63&31&64&30&16&64&87&63&84\\ \hline
$g_i$ (sec)&51&53&59&52&44&54&44&51&44&39&52&39 \\ \hline  \hline

Link ID ($i$)& 25& 26&27&28&29&30&31&32&33&34&35&36\\ \hline
$c_i^{\text{max}}$ (veh/hour)&2400&3000&3000&2400&2400&3000&7860&6600&2400&8500&4560&5200\\ \hline
$\theta_i$ (sec)&76&18&18&36&36&35&88&30&31&13&55&11 \\ \hline
$g_i$ (sec)&44&45&45&51&51&49&52&45&40&40&48&39 \\ \hline   \hline

Link ID ($i$)& 37 &38 & 39& 40&41 &42&43&44&45&46&47&48\\ \hline
$c_i^{\text{max}}$ (veh/hour)&3000&4260&7560&6060&6200&1500&5200&2400&3100&3000&2400&3000\\ \hline
$\theta_i$ (sec)&35&87&30&76&1&47&47&77&67&34&27&30 \\ \hline
$g_i$ (sec)&49&39&44&44&36&44&44&44&37&29&36&34 \\ \hline  
 
\end{tabular}
\caption{\small \sf Parameters of link capacity functions.}
\label{table:la-capacity}
\end{table}

\begin{table}[htb!]
\centering
\begin{tabular}{|c|c|c|c|c|c|c|c|c|c|c|l|} \hline
Link ID ($i$)& 31 & 32 & 33 & 34& 35 & 36 & 37 & 38 & 39 & 40\\ \hline
$\lambda_i$ (veh/hour)& 1271.4  &  270.4  & 755.3   & 573.3   & 414.7   &  694.2 & 185.9 &  185.9 & 1323.4   & 826.8 \\ \hline  
\end{tabular}
\caption{\small \sf External inflow on links located on the boundary of the network.}
\label{table:la-inflow}
\end{table}


\begin{table}[htb!]
\resizebox{\textwidth}{!}{

\centering
\begin{tabular}{|c|c|c|c|c|c|c|c|c|c|c|c|c|c|c|c|c|c|c|c|c|c|c|c|c|l|} \hline
Link ID ($i$)& 1 & 2 & 3 & 4& 5 & 6 & 7 & 8 & 9 & 10& 11 & 12 & 13 & 14 & 15 & 16& 17 & 18 & 19 & 20 & 21& 22 & 23 & 24 \\ \hline
$\bar{\delta_i}$ (sec)& 8&8&8&8&8&8&9&9&10&11&11&11&16&16&11&11&11&11&11&11&12&10&10&9 \\ \hline   \hline

Link ID ($i$)& 25& 26&27&28&29&30&31&32&33&34&35&36 & 37 &38 & 39& 40&41 &42&43&44&45&46&47&48\ \\ \hline
$\bar{\delta_i}$ (sec)& 9&9&8&8&9&9&9&10&10&10&10&8&5&22&23&23&7&7&8&8&7&7&8&8 \\ \hline   

\end{tabular}
}
\caption{\small \sf Link travel times.}
\label{table:la-travel-time}
\end{table}


Figure~\ref{fig-LA-rmse} shows RMSE between the $(x^*,z^*)$ obtained by direct simulation of \eqref{eq:flow-model-main}, with initial condition $x_i(0)=10$ for all $i \in \mc E$, over a sufficiently long time horizon, and the output $(x^{(k)},z^{(k)})$ of Algorithm~\ref{alg:steady-state-computation-network} during various iterations. 
The monotonically decreasing RMSE in Figure~\ref{fig-LA-rmse} (a) and (b) illustrates the monotonic convergence of the iterates of Algorithm~\ref{alg:steady-state-computation-network} to the desired periodic orbit $(x^*,z^*)$. Figure~\ref{fig-LA-rmse} (c) illustrates uniform monotone convergence of $x_i^{(k)}$ to $x_i^*$, as stated in Proposition~\ref{prop:outflow-update-monotonicity},  for a sample link. 

\begin{figure}
\begin{center}
\includegraphics[width=1.9 in]{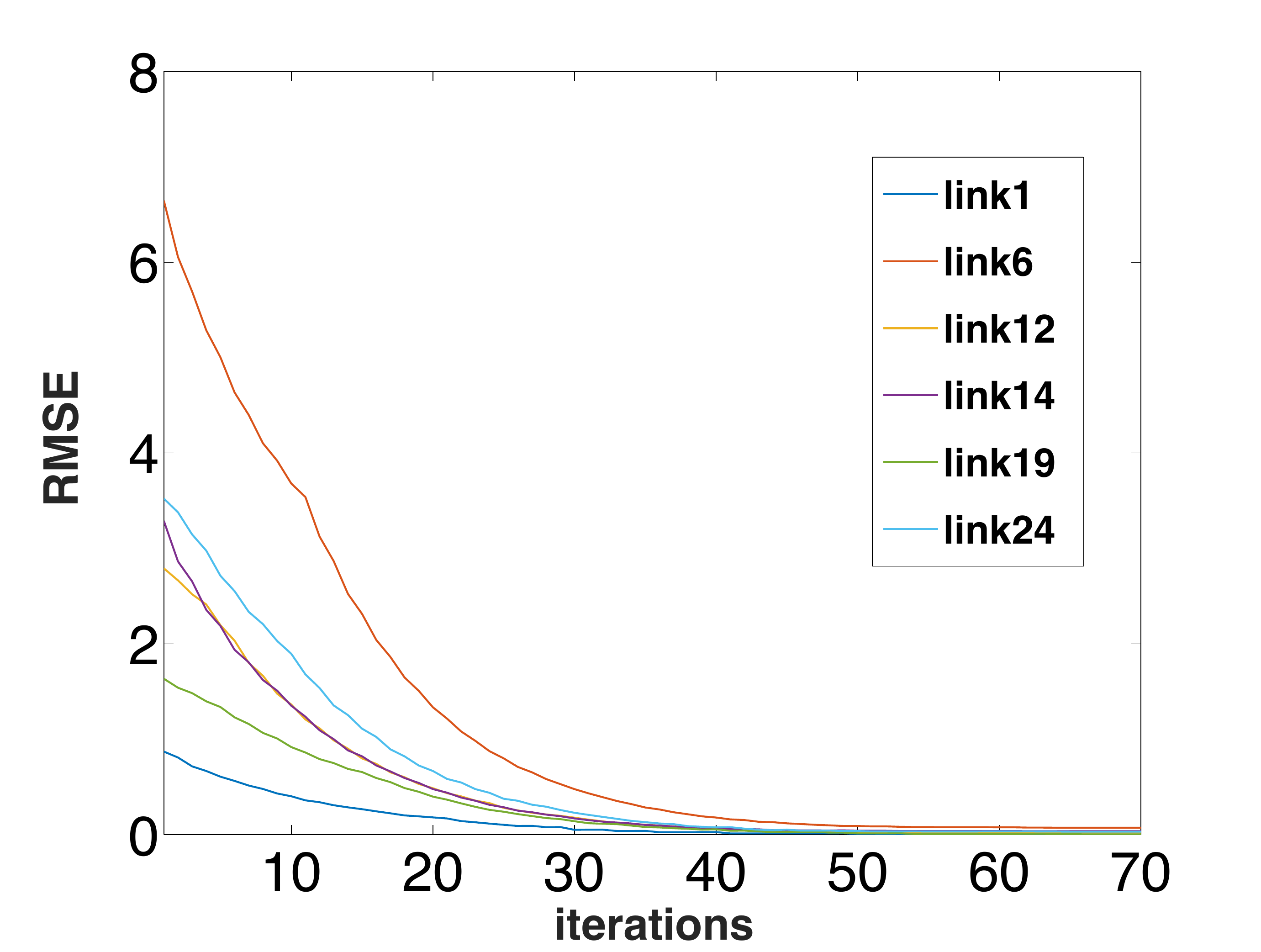} 
\hspace{0.1in}
\includegraphics[width=1.985 in]{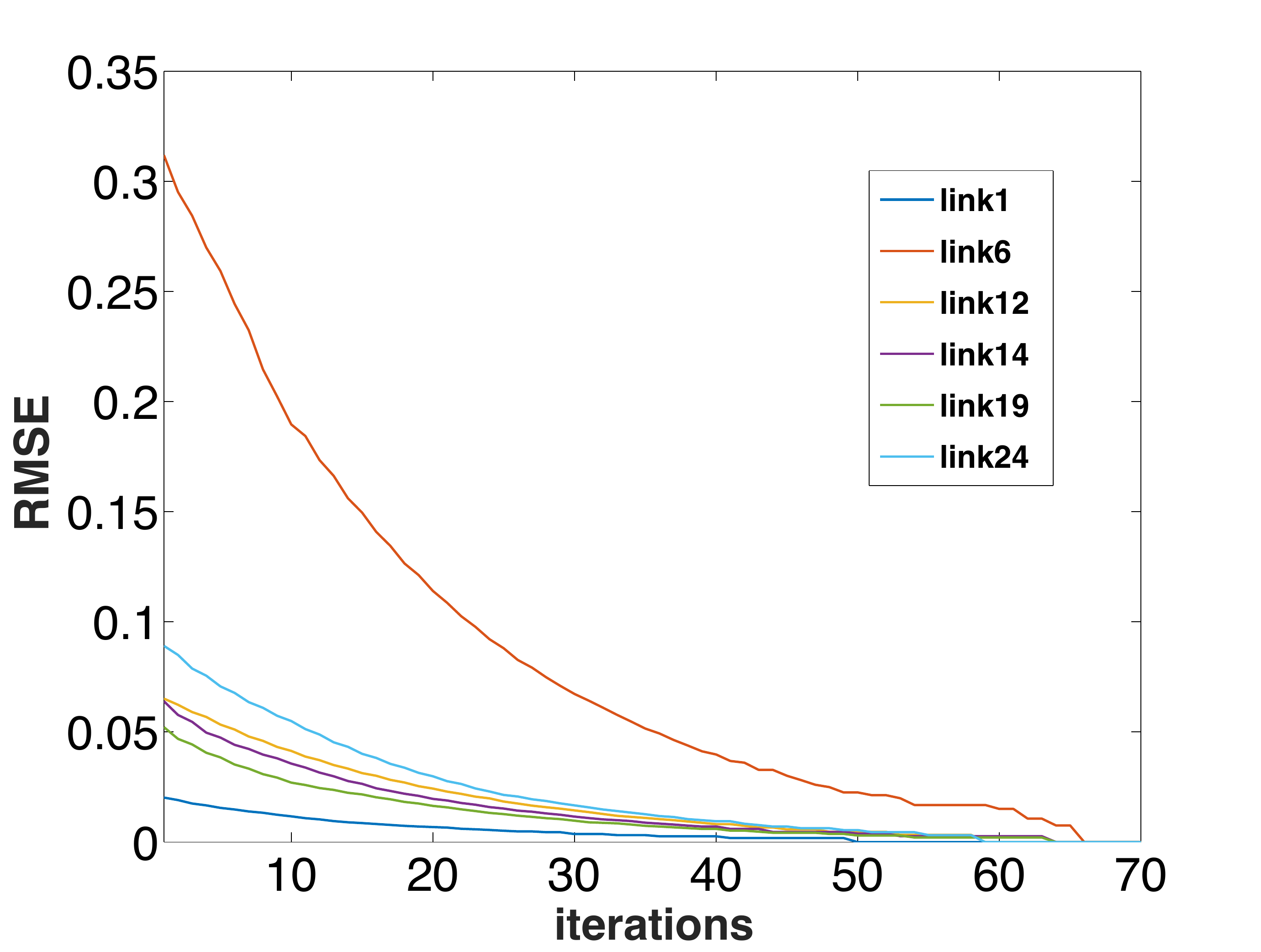} 
\hspace{0.1in}
\includegraphics[width=2.09 in]{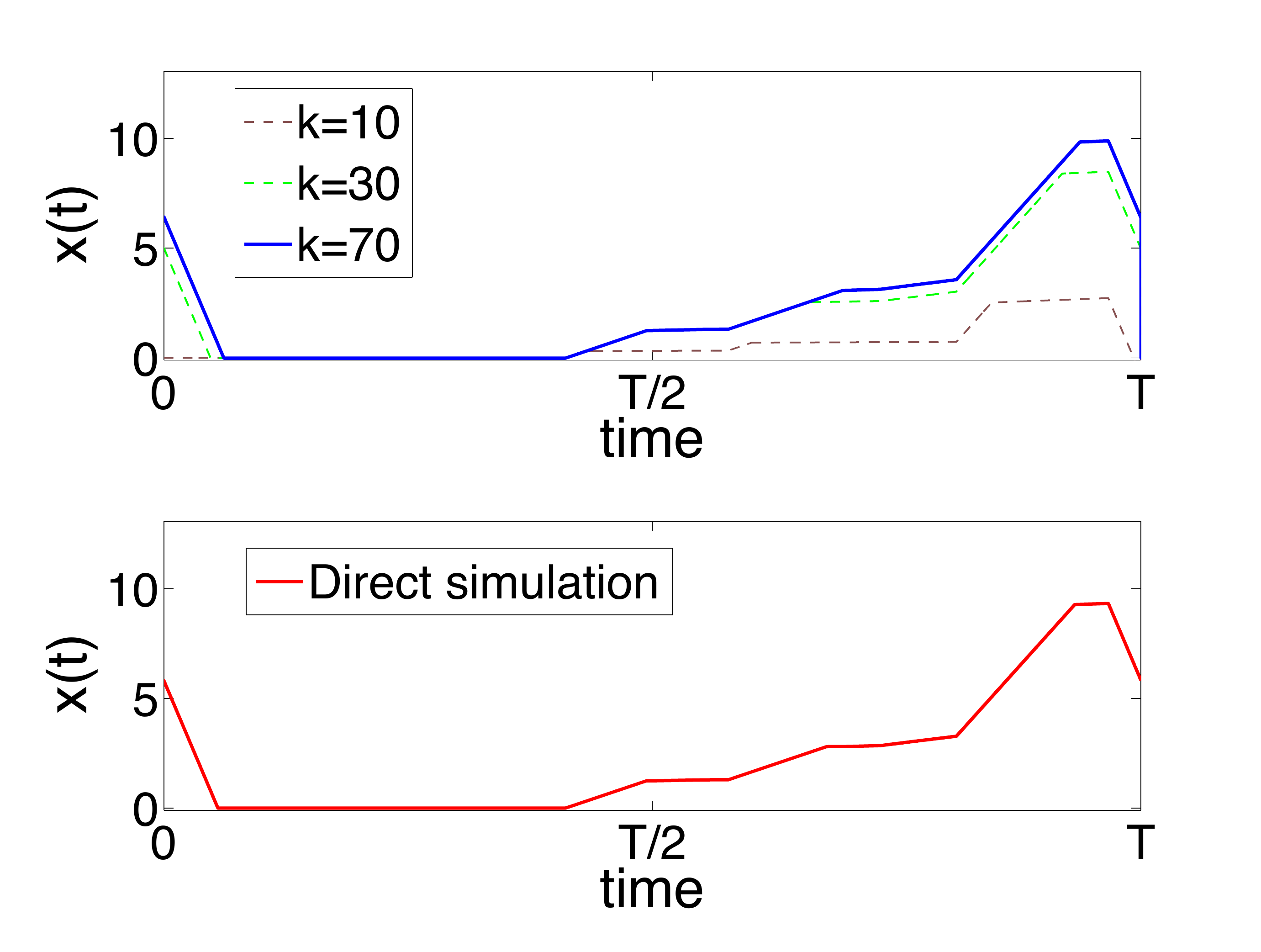} \\
(a) \hspace{1.85in}
(b) \hspace{1.85in}
(c)
\caption{\small \sf  Evolution of RMSE between (a) $x^{(k)}$ and $x^*$, and (b) $z^{(k)}$ and $z^*$ for a few representative links. (c) (top) $x_i^{(k)}$ from a few representative iterations and (bottom) $x_i^*$, both for $i=22$.
}
\vspace{-0.3in}
\label{fig-LA-rmse}
\end{center}
\end{figure}

We further compare the queue length obtained from Algorithm~\ref{alg:steady-state-computation-network} with microscopic traffic simulations in PTV VISSIM run for a 2-hour scenario starting from zero initial condition.
Figure~\ref{fig-LA-queue} compares the queue length from the last 20 cycles in VISSIM simulations, and steady-state computations from Algorithm~\ref{alg:steady-state-computation-network}, for a few representative links. For the queue lengths from VISSIM, the figure plots the mean queue length (obtained from last 20 cycles), as well as one standard deviation represented by the error bars.  In spite of the fact that the dynamical model in \eqref{eq:flow-model-main} is a coarse approximation of the microscopic traffic dynamics, e.g., \eqref{eq:flow-model-main} neglects spillbacks due to finite queue length capacity, neglects dependency of link travel times on queue lengths, and utilizes a simplified abstraction of capacity function in the form of a rectangular pulse, 
the plots in Figure~\ref{fig-LA-queue} show good consistency between the steady-state corresponding to the two models. 


\begin{figure}
\begin{center}
\includegraphics[width=2.01 in]{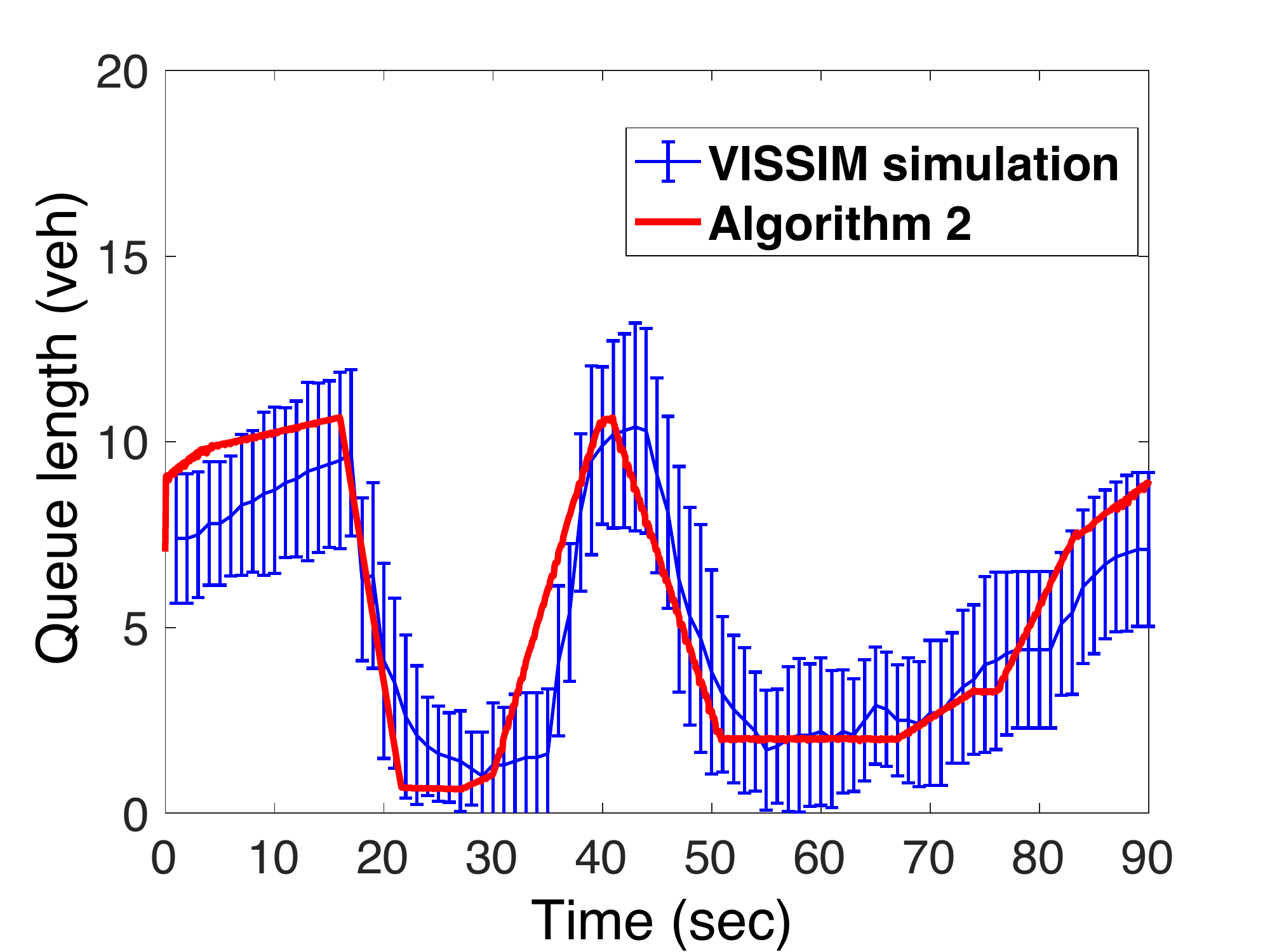} 
\hspace{0.051in}
\includegraphics[width=2.01 in]{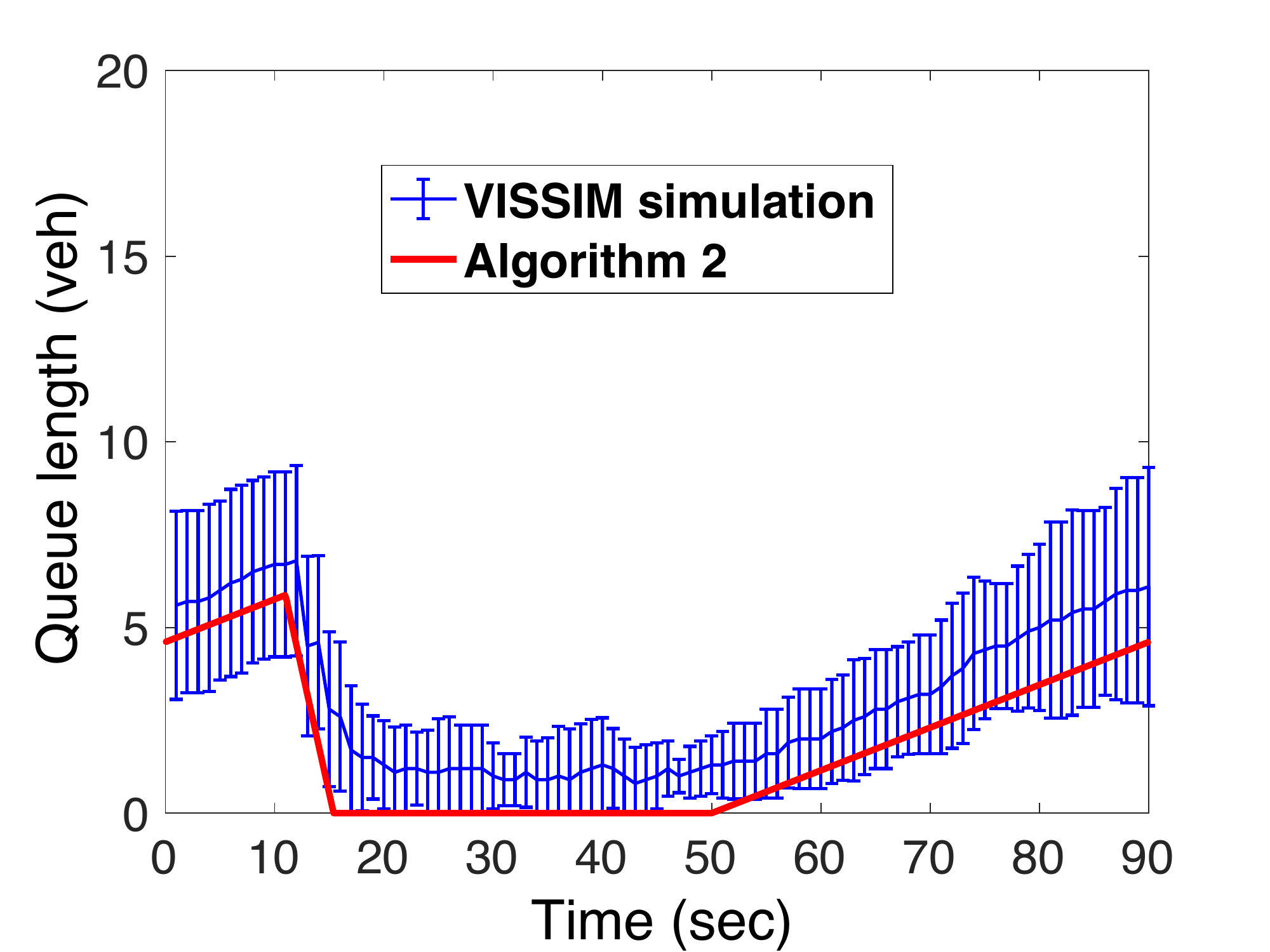} 
\hspace{0.051in}
\includegraphics[width=2.01 in]{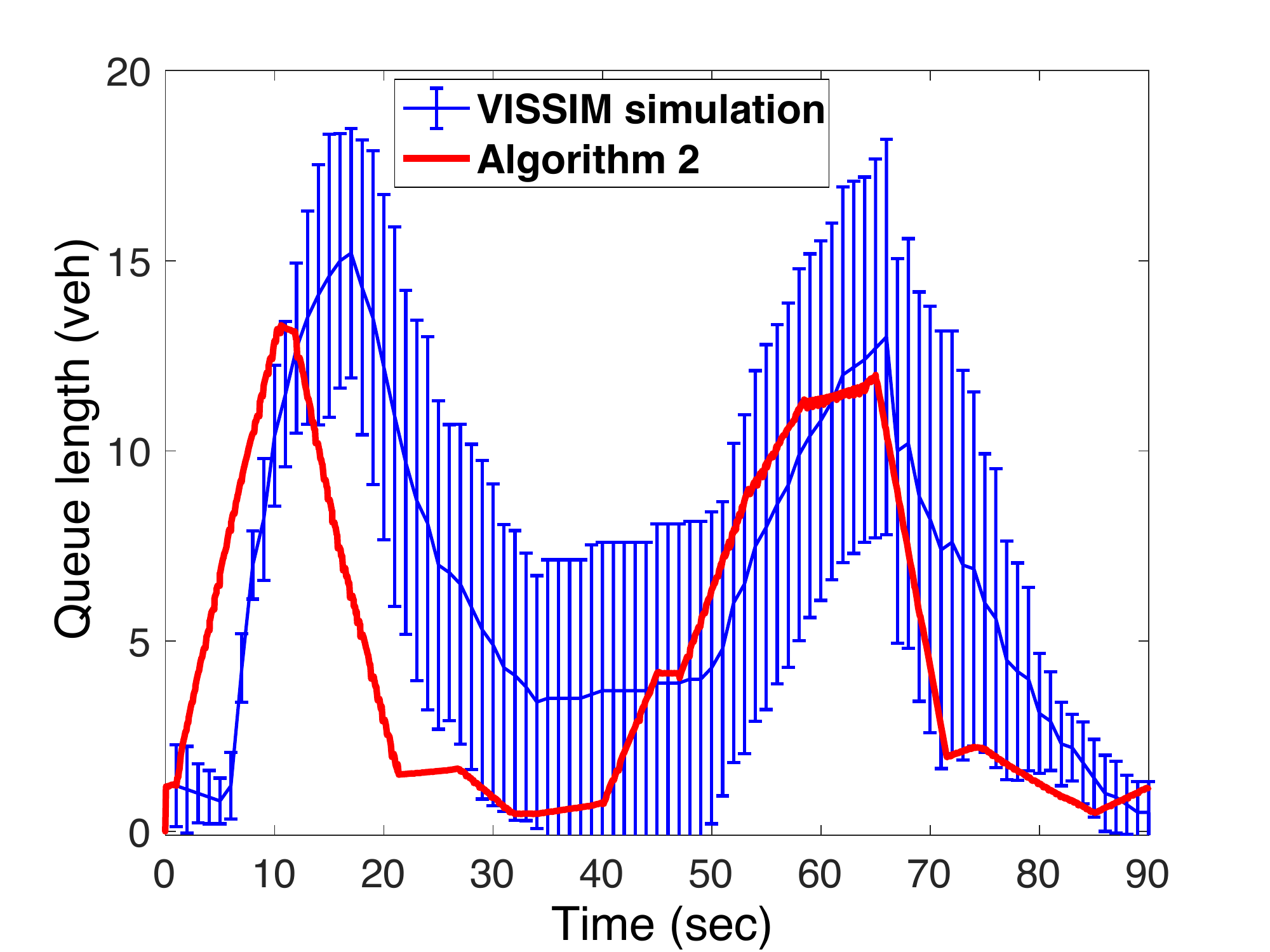} \\
(a) \hspace{1.85in}
(b) \hspace{1.85in}
(c)
\caption{\small \sf  Comparison of queue length obtained from Algorithm~\ref{alg:steady-state-computation-network} and VISSIM simulations for a few representative links (a) $i=20$, (b) $i=36$ and (c) $i=15$, in the network shown in Figure~\ref{fig-LA-network}. Here,``queue length" for link $i$ is equal to $x^*_i(t) + \sum_{j \in \mc E} R_{ji} \int_{\delta_{ji}}^0 z^*_j(t-s) \, ds$, i.e., it corresponds to the number of vehicles on link $i$ which are stationary as well as in transit from upstream.
}
\vspace{-0.3in}
\label{fig-LA-queue}
\end{center}
\end{figure}

\section{Conclusions and Future Work}
\label{sec:conclusions}
In this paper, we proposed a delay differential equation framework to simulate queue length dynamics for signalized arterial networks.
 Under periodicity and stability conditions, existence of a globally attractive periodic orbit is established for fixed-time control. An iterative procedure is also provided to compute this periodic orbit without direct simulations. Collectively, these results provide useful computational tools to evaluate the performance of signalized arterial networks for given traffic signal control parameters.  

%
 
The fact that the well-posedness of our traffic flow model does not require link travel times to be strictly bounded away from zero motivates us to consider extensions to state-dependent link travel times in order to model spillbacks. While the well-posedness of the model proposed in this paper extends to a reasonable class of adaptive control policies, extensions of the steady-state analysis and computation remains to be done. 
We plan to leverage analysis from our previous work~ \cite{Nilsson.Hosseini.ea.CDC15} for this purpose. We also plan to design traffic signal control optimization techniques which use representation of steady state queue lengths in terms of transition points as developed in this paper. Finally, feedback control, possibly in a distributed manner, along the lines of recent work for green time split control, e.g., see \cite{Varaiya:13,Hosseini.Savla:TRB16}, would greatly facilitate scalability.

\bibliographystyle{ieeetr}
\bibliography{ref1.bib,ref2.bib}

\begin{thebibliography}{10}

\bibitem{Papageorgiou:03}
M.~Papageorgiou, C.~Diakaki, V.~Dinopoulou, A.~Kotsialos, and Y.~Wang, ``Review
  of road traffic control strategies,'' {\em Proceedings of the IEEE}, vol.~91,
  no.~12, pp.~2043--2067, 2003.

\bibitem{Dans.Gazis:76}
G.~D'ans and D.~Gazis, ``Optimal control of oversaturated store-and-forward
  transportation networks,'' {\em Transportation Science}, vol.~10, no.~1,
  pp.~1--19, 1976.

\bibitem{Aboudolas.et.al:09}
K.~Aboudolas, M.~Papageorgiou, and E.~Kosmatopoulos, ``Store-and-forward based
  methods for the signal control problem in large-scale congested urban road
  networks,'' {\em Transportation Research Part C: Emerging Technologies},
  vol.~17, no.~2, pp.~163--174, 2009.

\bibitem{Savla.Lovisari.ea.Allerton13}
K.~Savla, E.~Lovisari, and G.~Como, ``On maximally stabilizing adaptive traffic
  signal control.,'' in {\em Allerton Conference on Communication, Control and
  Computing}, (Monticello, IL), pp.~464--471, 2013.

\bibitem{DenBoom.DeSchutter:06}
T.~J. van~den Boom and B.~De~Schutter, ``Modelling and control of discrete
  event systems using switching max-plus-linear systems,'' {\em Control
  engineering practice}, vol.~14, no.~10, pp.~1199--1211, 2006.

\bibitem{Haddad.DeSchutter.ea:TAC10}
J.~Haddad, B.~De~Schutter, D.~Mahalel, I.~Ioslovich, and P.-O. Gutman,
  ``Optimal steady-state control for isolated traffic intersections,'' {\em
  IEEE Transactions on automatic control}, vol.~55, no.~11, pp.~2612--2617,
  2010.

\bibitem{Muralidharan.Pedarsani.ea:15}
A.~Muralidharan, R.~Pedarsani, and P.~Varaiya, ``Analysis of fixed-time
  control,'' {\em Transportation Research Part B: Methodological}, vol.~73,
  pp.~81--90, 2015.

\bibitem{Coogan.Kim.ea:17}
S.~Coogan, E.~Kim, G.~Gomes, M.~Arcak, and P.~Varaiya, ``Offset optimization in
  signalized traffic networks via semidefinite relaxation,'' {\em
  Transportation Research Part B: Methodological}, vol.~100, pp.~82--92, 2017.

\bibitem{Gartner.Deshpande:09}
N.~H. Gartner and R.~Deshpande, ``Harmonic analysis and optimization of traffic
  signal systems,'' in {\em Transportation and Traffic Theory 2009: Golden
  Jubilee}, pp.~345--364, Springer, 2009.

\bibitem{Papacostas.Prevedouros:01}
C.~S. Papacostas and P.~D. Prevedouros, {\em Transportation Engineering and
  Planning}.
\newblock Prentice Hall, 3~ed., 2001.

\bibitem{Nilsson.Hosseini.ea.CDC15}
G.~Nilsson, P.~Hosseini, G.~Como, and K.~Savla, ``Entropy-like {L}yapunov
  functions for the stability analysis of adaptive traffic signal controls,''
  in {\em IEEE Conference on Decision and Control}, (Osaka, Japan),
  pp.~2193--2198, 2015.

\bibitem{Varaiya:13}
P.~Varaiya, ``The max-pressure controller for arbitrary networks of signalized
  intersections,'' in {\em Advances in Dynamic Network Modeling in Complex
  Transportation Systems}, pp.~27--66, Springer, 2013.

\bibitem{Hosseini.Savla:TRB16}
P.~Hosseini and K.~Savla, ``A comparison study between proportionally fair and
  max pressure controllers for signalized arterial networks,'' in {\em
  Transportation Research Board Annual Meeting}, 2016.

\end{thebibliography}

\appendix

\subsection{Proof of Proposition~\ref{prop:solution-properties}}
\label{sec:existence}

The feasible set for \eqref{z-linprog} is non-empty ($z=0$ is always feasible) and compact. Therefore, there exists at least one solution, say $\hat{z}$, to \eqref{z-linprog}. 

We first note that if $z$ and $\tilde{z}$ satisfy the constraints in \eqref{z-linprog}, then so does $z^{\text{max}}$ defined by $z_i^{\text{max}}=\max\{z_i,\tilde{z}_i\}$ for all $i \in \mc E$. This is because $z \leq c(t)$ and $\tilde{z} \leq c(t)$ implies that $z^{\text{max}} \leq c(t)$, and therefore the first inequality in \eqref{z-linprog} is trivially satisfied by $z^{\text{max}}$. With respect to the second inequality, fix some $i \in \mc I$, and let $z_i^{\text{max}}=z_i$ (without loss of generality). 
Then, 
$
z_i^{\text{max}}=z_i \leq \tilde{\lambda}_i + \sum_{j \in \mc E_i: \, z_j^{\text{max}}=z_j} R_{ji} z_j + \sum_{j \in \mc E_i: \, z_j^{\text{max}}=\tilde{z}_j} R_{ji} z_j \leq \tilde{\lambda}_i +\sum_{j \in \mc E_i: \, z_j^{\text{max}}=z_j} R_{ji} z^{\text{max}}_j + \sum_{j \in \mc E_i: \, z_j^{\text{max}}=\tilde{z}_j} R_{ji} z^{\text{max}}_j = \tilde{\lambda}_i + \sum_{j \in \mc E_i} R_{ji}  z_j^{\text{max}}$, 
where the first inequality follows from \eqref{z-linprog} and the second one follows from the definition of $z^{\text{max}}$. This argument is used to prove that $\hat{z}$ is unique and is independent of $\eta \in \real_{>0}^{\mc E}$ as follows: 
\begin{itemize}
\item[(a)] \underline{Uniqueness for a given $\eta$}: Let $z$ and $\tilde{z}$ be two optimal solutions for a given $\eta$. Since $z \neq \tilde{z}$, there exist $i, j \in \mc E$ such that $ 
z_i^{\text{max}} > z_i$ and $ 
z_j^{\text{max}} > \tilde{z}_j$. Therefore, $\eta^T z^{\text{max}} > \eta^T z = \eta^T \tilde{z}$, contradicting optimality of $z$ and $\tilde{z}$. 

\item[(b)] \underline{Independence w.r.t. $\eta$}: Let $z$ and $\tilde{z}$ be the unique optimal solutions corresponding to $\eta$ and $\tilde{\eta}$ respectively. However, using the argument in case (a) above where $z_i^{\text{max}} > z_i$ and $z_j^{\text{max}} > \tilde{z}_j$, we have $\eta^T z^{\text{max}} > \eta^T z$ and $\tilde{\eta}^T z^{\text{max}} > \tilde{\eta}^T \tilde{z}$, thereby contradicting optimality of $z$ and $\tilde{z}$. 
\end{itemize}
In order to prove the first part of \eqref{eq:z-def}, let $\hat{z}_i < c_i(t)$ for some $i \in \mc E \setminus \mc I(x)$. Then, a small increase in $\hat{z}_i$ will trivially maintain feasibility of the first set of constraints in \eqref{z-linprog}, and also maintains feasibility with respect to the second set of constraints because it only affects the right hand side which increases with increase in $\hat{z}_i$. However, increasing $\hat{z}_i$ strictly increases the objective, thereby contradicting optimality of $\hat{z}$. 

With regards to the second part of \eqref{eq:z-def}, if $c_i(t) < \tilde{\lambda}_i(t)+\sum_{j \in \mc E_i} R_{ji} \hat{z}_j$ for some $i \in \mc I(x)$, then the proof follows along the same lines as the first part of \eqref{eq:z-def}. Allowing $\hat{z}_i < \tilde{\lambda}_i(t) + \sum_{j \in \mc E_i} R_{ji} \hat{z}_j < c_i(t)$ for some $i \in \mc I(x)$ leads to a contradiction for similar reason. 

\subsection{Proof of Proposition~\ref{prop:existence-traffic-dynamics-solution}}
Once the solution $(x(t),\beta(t))$ to \eqref{eq:flow-model-main} is proven to exist and be unique, its non-negativity follows from the constraint on $z(x,t)$ in \eqref{z-linprog}.
Our approach to showing the existence and uniqueness of solution to \eqref{eq:flow-model-main} is to show it on contiguous intervals $[0,\triangle), [\triangle, 2 \triangle), \ldots$. $\triangle >0$ is chosen to be (the greatest) common divisor of: (i) time instants in $[-\bar{\delta},0]$ corresponding to switch in values of $z(t)$; (ii) time instants in $[0,T]$ corresponding to switch in values of $\lambda(t)$ and $c(t)$; and (iii) $\{\delta_{ji}\}_{j,i \in \mc E}$. Under Assumption~\ref{ass:piece-wise-constant}, such a $\triangle >0$ exists if, e.g., the three types of quantities are all rational numbers. 


The next result establishes the required existence and uniqueness of solution to \eqref{eq:flow-model-main} over $[0,\triangle)$, along with an important \emph{input-output} property. 

\begin{lemma}
\label{lem:existence-uniqueness-small-interval}
If $\map{\tilde{\lambda}}{[0,\triangle)}{\real_{\geq 0}^{\mc E}}$ is piece-wise constant and non-increasing, and $\map{c}{[0,\triangle)}{\real_{\geq 0}^{\mc E}}$ is constant,
then, for any $x(0) \in \real_{\geq 0}^{\mc E}$, there exists a unique solution $\map{x}{[0,\triangle)}{\real_{\geq 0}^{\mc E}}$ to \eqref{eq:flow-model-main}. Moreover, $\map{z}{[0,\triangle)}{\real_{\geq 0}^{\mc E}}$ is piece-wise constant and non-increasing.
\end{lemma}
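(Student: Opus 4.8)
The plan is to construct the solution on $[0,\triangle)$ by an active-set continuation and to read off the claimed structure of $z$ from monotonicity properties of the outflow map established in Proposition~\ref{prop:solution-properties}. First I would record two comparisons for the map $x \mapsto z(x,t)$ that are immediate from the lattice argument in the proof of Proposition~\ref{prop:solution-properties} (where the coordinatewise maximum of two feasible points is shown feasible, so the optimizer is the greatest feasible outflow): for fixed $t$, (i) $z$ is nondecreasing in $\tilde{\lambda}(t)$, since increasing $\tilde{\lambda}$ only relaxes the constraints $z_i \le \tilde{\lambda}_i + \sum_{j\in\mc E_i}R_{ji}z_j$; and (ii) $z$ is nonincreasing in the active set $\mc I$ with respect to inclusion, since enlarging $\mc I$ only adds constraints. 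These are exactly the input-output comparisons that drive the conclusion.

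Next, since $\tilde{\lambda}$ is piecewise constant with finitely many pieces, I would split $[0,\triangle)$ into finitely many subintervals on which both $\tilde{\lambda}$ and $c$ are constant, argue on one such subinterval $[\tau,\tau')$ with a given $x(\tau)\ge 0$, and then concatenate. On such a subinterval I would build the trajectory by active-set continuation: freeze $\mc I(x(\tau))$, compute the (constant) outflow $z$ from Proposition~\ref{prop:solution-properties}, integrate the affine dynamics $\dot{x}_i = \tilde{\lambda}_i + \sum_{j\in\mc E_i}R_{ji}z_j - z_i$ until the first time a coordinate that was positive reaches zero, update $\mc I$ by re-solving the linear program (which also resolves, in a single solve, any instantaneous cascade through the zero-delay couplings $\mc E_i$), and repeat. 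Two observations make this terminate and yield the claim. A link that enters $\mc I$ at a crossing does so with inflow no larger than $c_i$, hence with $z_i=\tilde{\lambda}_i+\sum_{j\in\mc E_i}R_{ji}z_j$ and $\dot{x}_i = 0$; and a link that is in $\mc I$ with inflow below $c_i$ (``draining'') has, by the inductive nonincrease of the upstream outflows together with constant $\tilde{\lambda}$, a nonincreasing inflow, so it remains at zero. Consequently each link enters $\mc I$ at most once, giving at most $|\mc E|$ crossings and ruling out Zeno behavior; the only other transitions are zero-queue links whose inflow exceeds $c_i$ becoming positive, which occur only at $\tau$ and leave $z_i=c_i$ unchanged.

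I would then close the induction on the finitely many, ordered crossing times: between crossings $z$ is constant, and at a crossing $\mc I$ grows while $\tilde{\lambda}$ is unchanged, so by (ii) the re-solved outflow satisfies $z(\tau_k^+)\le z(\tau_k^-)$ coordinatewise; across a piece boundary $x$ is continuous so $\mc I$ is unchanged while $\tilde{\lambda}$ drops, so by (i) $z$ again does not increase. Hence $z$ is piecewise constant with finitely many pieces and nonincreasing on all of $[0,\triangle)$, and $x$ is the concatenation of finitely many affine arcs, giving existence; uniqueness follows because on each arc the affine dynamics and the first crossing time are uniquely determined, so any solution must coincide with the constructed one, while nonnegativity is built in, since the second constraint of the linear program forces $\dot{x}_i\ge 0$ whenever $x_i=0$. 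I expect the main obstacle to be the rigorous treatment of the instantaneous zero-delay coupling at a crossing: showing that the simultaneous re-solution over all zero-queue links yields a well-defined, coordinatewise smaller outflow rather than an ill-posed algebraic loop, which is precisely where the uniqueness of the optimizer in Proposition~\ref{prop:solution-properties} and its monotonicity in $\tilde{\lambda}$ and in $\mc I$ are essential.
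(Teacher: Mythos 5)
Your proposal is correct and follows essentially the same route as the paper's proof: an active-set construction in which, on each constant piece of $\tilde{\lambda}$, the outflow $z$ stays constant between changes of $\mc I(x)$, the set $\mc I(x)$ can only expand (so switches are finite and Zeno behavior is excluded), and the LP monotonicity in $\tilde{\lambda}$ and in $\mc I$ yields the non-increasing, piecewise-constant structure of $z$, after which the unique affine arcs are concatenated. Your per-link counting (at most $|\mc E|$ crossings per piece) and the explicit use of lattice monotonicity at piece boundaries are mild refinements of the paper's identical argument, not a different method.
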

\begin{proof}
\eqref{z-linprog} and Proposition~\ref{prop:solution-properties} imply that $z(x,t)$ remains constant over a time interval if so do $\tilde{\lambda}(t)$, $c(t)$ and $\mc I(x)$. 
Let $(\tau_1, \tau_2, \ldots,) \in (0,\triangle)$ be the finite number of time instants corresponding to changes in the value of $\tilde{\lambda}(t)$. 
Since $\tilde{\lambda}(t)$ and $c(t)$ are constant over $[0,\tau_1)$, $z(x,t)$ will remain constant at least until say at $t_s \in [0,\tau_1)$ when there is possibly a change in $\mc I(x)$. This also implies the existence and uniqueness of solution to \eqref{eq:flow-model-main} over $[0,t_s)$. Moreover, since $z(x,t)$ is constant over $[0,t_s)$, if $x_i(t=0^+)=0$ for some $i \in \mc E$, then $x_i(t) \equiv 0$ over $[0,t_s]$. Thus, a change in the set $\mc I(x)$ at $t_s$ could only involve its expansion. Therefore, \eqref{z-linprog} implies that $z(x(t_s),t_s) \leq z(x(t_s^-),t_s^-)$. Continuing along these lines, $\mc I(x)$ is non-contracting over $[0,\tau_1)$. Since $\tilde{\lambda}(\tau_1) < \tilde{\lambda}(\tau_1^-)$ by assumption, \eqref{z-linprog} implies that  
$\mc I(x(\tau_1^-)) \subsetneq \mc I(x(\tau_1))$, and hence also $z(x(\tau_1),\tau_1) \leq z(x(\tau_1^-),\tau_1^-)$. Collecting these facts together implies that $\mc I(x)$ is non-contracting and $z(x,t)$ is non-increasing over $[0,\triangle)$. Combining this with the fact that $\mc I(x)$ can take at most $2^{\mc E}$ distinct values, implies that the total number of changes in $\mc I(x)$ over $[0,\triangle)$ are finite. Concatenating the unique solutions to \eqref{lem:existence-uniqueness-small-interval} from between changes in $\mc I(x)$ gives the lemma.
\end{proof}

Since $z(t)$ is non-increasing and piece-wise constant in each of the intervals $[-\bar{\delta},-\bar{\delta}+\triangle), \ldots, [0,\triangle)$ (cf. Lemma~\ref{lem:existence-uniqueness-small-interval} and the assumption in Proposition~\ref{prop:existence-traffic-dynamics-solution}), this implies that $\tilde{\lambda}(t)$ is non-increasing and piece-wise constant over $[\triangle,2 \triangle)$. One can then use Lemma~\ref{lem:existence-uniqueness-small-interval} to show existence and uniqueness of solution over $[\triangle,2 \triangle)$. Recursive application of the procedure then proves Proposition~\ref{prop:existence-traffic-dynamics-solution}.

\subsection{Proof of Theorem~\ref{thm:globally-attractive-existence}}
\label{sec:attractivity}
The structure of the proof of Theorem~\ref{thm:globally-attractive-existence} follows closely along the lines of \cite{Muralidharan.Pedarsani.ea:15}, with differences due to the combination of dynamics in \eqref{eq:flow-dynamics}, and the fact that we allow $\underline{\delta}=0$.


\begin{lemma}
\label{lem:monotonicity}
Suppose $x_{0} \leq x'_{0}$, $\lambda(t) \leq \lambda'(t)$ and $c(t)=c'(t)$ for all $t \geq 0$, and $\{z(t), t \in [-\bar{\delta},0)\} \leq \{z'(t), t \in [-\bar{\delta},0)\}$. If $\{\lambda(t), \lambda'(t), c(t), c'(t)\}$ are all piecewise constant,  
then the corresponding solutions to \eqref{eq:flow-model-main} satisfy $x(t) \leq x'(t)$ and $z(t) \leq z'(t)$ for all $t \geq 0$. 
%
\end{lemma}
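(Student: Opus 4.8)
The plan is to reduce this trajectory-level comparison to a \emph{pointwise} monotonicity property of the outflow map defined by \eqref{z-linprog}, and then to propagate that property through time using the same interval decomposition $[0,\triangle), [\triangle, 2\triangle), \ldots$ employed in the proof of Proposition~\ref{prop:existence-traffic-dynamics-solution}. Throughout I would exploit two structural facts already available: the characterization of the LP solution as the componentwise-maximal feasible point, which is implicit in the proof of Proposition~\ref{prop:solution-properties}, and the fact that $\mc I(x)$ is non-contracting on each such interval, established in Lemma~\ref{lem:existence-uniqueness-small-interval}.

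First I would establish the key pointwise lemma: if $\tilde{\lambda} \leq \tilde{\lambda}'$, $c = c'$, and the index sets satisfy $\mc I(x') \subseteq \mc I(x)$ (which holds whenever $x \leq x'$, since $x_i'=0$ forces $x_i=0$), then the corresponding LP solutions obey $z \leq z'$. The argument is short given the proof of Proposition~\ref{prop:solution-properties}: the primed LP has a larger right-hand side in the first family of constraints through $\tilde{\lambda}'$ and fewer constraints of the second family, because $\mc I(x') \subseteq \mc I(x)$, so $z$ is itself feasible for the primed LP; since $z'$ is the componentwise-maximal feasible point of the primed LP, $z \leq z'$ follows immediately. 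This handles the instantaneous coupling through $\mc E_i$ without needing any acyclicity of the zero-travel-time subgraph, because the self-referential constraint $z_i \leq \tilde{\lambda}_i + \sum_{j \in \mc E_i} R_{ji} z_j$ is preserved verbatim when passing from the unprimed to the primed feasibility test.

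Next I would run the induction over the intervals $[k\triangle, (k+1)\triangle)$. Because $\triangle$ is chosen as a common divisor of all delays, every positive $\delta_{ji}$ is a multiple of $\triangle$, so the delayed terms $z_j(t-\delta_{ji})$ appearing in $\tilde{\lambda}_i(t)$ for $t$ in the current interval are evaluated in strictly earlier intervals, or in the history $[-\bar{\delta},0)$ for the first interval. Hence the inductive hypothesis $z \leq z'$ on earlier intervals, together with $\lambda \leq \lambda'$, yields $\tilde{\lambda} \leq \tilde{\lambda}'$ on the current interval; the base case uses the assumed ordering of the departure histories. Within a fixed interval I would subdivide at the finitely many switching points of $\tilde{\lambda}, \tilde{\lambda}', \mc I(x), \mc I(x')$. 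On each subinterval all these quantities are constant, so $x$ and $x'$ are affine and $z, z'$ are constant; starting from $x \leq x'$ at the left endpoint, which is the inductive hypothesis across subintervals, the key lemma gives $z \leq z'$ there, and a short case split — $x_i>0$, so $z_i = c_i = z_i'$ and $\dot{x}_i' - \dot{x}_i \geq 0$, versus $x_i = 0$, where $\dot{x}_i = [\,\tilde{\lambda}_i + \sum_{j \in \mc E_i} R_{ji} z_j - c_i\,]^+$ is monotone in the inflow — shows that $x'(t)-x(t)$ cannot become negative before the next switching point.

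The step I expect to be the main obstacle is this within-subinterval comparison, specifically ruling out a first crossing when an outflow ordering $z_i \leq z_i'$ nominally depletes the primed queue faster. The resolution is that such a configuration can only occur on links with positive queue, where in fact $z_i = z_i'$ because $c = c'$ and $\mc I(x') \subseteq \mc I(x)$ place both links outside their respective zero-queue sets; on zero-queue links the outflow is capped by the inflow and the comparison is immediate. Making this crossing argument rigorous under discontinuous dynamics is precisely what motivates reducing everything to the finitely many subintervals on which all data are constant, so that the comparison is purely affine and the nesting $\mc I(x') \subseteq \mc I(x)$ is preserved throughout each subinterval.
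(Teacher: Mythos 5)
Your proposal is correct and follows essentially the same route as the paper's proof: both reduce the claim to monotonicity of the LP solution \eqref{z-linprog} with respect to the inflow and the zero-queue index set $\mc I(x)$ (your feasibility-plus-componentwise-maximality argument is precisely what the paper invokes implicitly via Proposition~\ref{prop:solution-properties} and ``along the same lines as'' Lemma~\ref{lem:existence-uniqueness-small-interval}), followed by a componentwise derivative comparison split on $x_i>0$ versus $x_i=0$, iterated forward in time. The paper's version is terser---it argues only on ``a small time interval starting from zero'' without spelling out the $\triangle$-interval induction through the delays or the subdivision at switching points---so your write-up mainly supplies rigor that the paper leaves implicit.
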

\begin{proof}
It suffices to show the result for a small time interval starting from zero. Moreover, it is sufficient to show that $x(t) \leq x'(t)$ in this interval, since this implies $\mc I(x'(t)) \subseteq \mc I(x(t))$, and hence $z(t) \leq z'(t)$ along the same lines as the proof of Lemma~\ref{lem:existence-uniqueness-small-interval}.
%
%
We shall prove this for each component of $x(t)$ and $x'(t)$ independently. 
Fix a component $i \in \mc E$. 
\begin{enumerate}
\item If $0<x_{0,i} \leq x'_{0,i}$, then, recalling \eqref{eq:E-i-def-new}, $\dot{x}_i(t=0) = \lambda_i(0) + \sum_{j \in \mc E \setminus \mc E_i} R_{ji} z_j(-\delta_{ji}) + \sum_{j \in \mc E_i} R_{ji} z_i(t=0)-c_i(0) \leq \lambda'_i(0) + \sum_{j \in \mc E \setminus \mc E_i} R_{ji} z'_j(-\delta_{ji}) + \sum_{j \in \mc E_i} R_{ji} z'_j(t=0) - c'_i(t)=\dot{x}'_i$, where we have used the fact that $x_0 \leq x'_0$ implies $z(0) \leq z'(0)$. Hence $x(t) \leq x'(t)$ for small time interval starting from zero.
\item Now consider the case when $0 = x_{0,i} \leq x'_{0,i}$. If $x_i(t) \equiv 0$ for a small interval starting from zero, then trivially $x_i(t) \leq x'_i(t)$ over that interval. Otherwise, the proof follows along the same lines as Case 1. 
%
\end{enumerate}
\end{proof}

\begin{lemma}
\label{lem:boundedness}
If $\lambda(t)$, $c(t)$, and $(x(0),\beta(0))$ satisfy Assumption~\ref{ass:piece-wise-constant}, and if the stability condition in Definition~\ref{def:stability} holds true, then the solution $x(t)$ to \eqref{eq:flow-model-main} is bounded.
\end{lemma}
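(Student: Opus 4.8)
The plan is to establish boundedness one component at a time, using the fact that every link's outflow is capped by its capacity, $z_j(t) \le c_j(t)$, so that the inflow into any link $i$ is dominated by a $T$-periodic quantity whose period-average is controlled by the stability condition. Written component-wise, Definition~\ref{def:stability} reads $\bar c_i - \sum_{j \in \mc E} R_{ji}\bar c_j > \bar\lambda_i + \epsilon$, which says precisely that the per-period average outflow capacity of link $i$ strictly exceeds the largest possible per-period average inflow into $i$, namely external arrivals plus the maximal contribution $\sum_{j \in \mc E} R_{ji}\bar c_j$ obtainable if all upstream links were saturated. Concretely, I would integrate \eqref{eq:flow-dynamics} over one period $[kT,(k+1)T]$, bound each upstream outflow by its capacity, and use the $T$-periodicity of $\lambda_i$ and $c_j$ (so that the shift gives $\int_{kT}^{(k+1)T} c_j(t-\delta_{ji})\,dt = T\bar c_j$, independent of $\delta_{ji}$) to obtain
$$
x_i((k+1)T) - x_i(kT) \le T\bar\lambda_i + T\sum_{j \in \mc E} R_{ji}\bar c_j - \int_{kT}^{(k+1)T} z_i(t)\,dt .
$$

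Next I would split each period into two cases. In the case where $x_i(t) > 0$ throughout $[kT,(k+1)T]$, Proposition~\ref{prop:solution-properties} gives $z_i \equiv c_i$ on the period, so $\int_{kT}^{(k+1)T} z_i = T\bar c_i$ and the displayed inequality together with the stability condition yields $x_i((k+1)T) - x_i(kT) < -\epsilon T$. In the complementary case, $x_i$ vanishes at some $t_0$ in the period; since the growth rate satisfies $\dot x_i \le \lambda_i(t) + \sum_{j \in \mc E} R_{ji} z_j(t-\delta_{ji}) \le B_i$ for a finite constant $B_i := \sup_{t\ge 0}\big(\lambda_i(t) + \sum_{j \in \mc E} R_{ji} c_j(t)\big)$ (finite because $\lambda$ and $c$ are piecewise constant and periodic), the queue can grow at most $B_i T$ from $t_0$, so $x_i((k+1)T) \le B_i T =: M_i$. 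Combining the two cases gives the recursion $x_i((k+1)T) \le \max\{x_i(kT) - \epsilon T,\, M_i\}$, from which a one-line induction yields $x_i(kT) \le \max\{x_i(0), M_i\}$ for all $k$. Finally, the same rate bound $\dot x_i \le B_i$ controls the intermediate times: for $t \in [kT,(k+1)T]$ we have $x_i(t) \le x_i(kT) + B_i T$, so $x_i(t) \le \max\{x_i(0), M_i\} + B_i T$ for all $t \ge 0$, which is the desired bound.

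The main obstacle is the coupling between links through the upstream outflows $z_j$, which makes the per-component drift seemingly depend on the entire network state. The key that dissolves this difficulty is the uniform estimate $z_j \le c_j$, which decouples the period-average drift so that each queue can be analyzed in isolation against a fixed, periodic worst-case inflow. Two bookkeeping points deserve care in the writeup: first, that the average of the \emph{shifted} capacity $c_j(t-\delta_{ji})$ over a full period equals $\bar c_j$ regardless of the travel time $\delta_{ji}$, which is what lets zero (or arbitrary) travel times enter harmlessly; and second, that the finitely many initial periods, during which the inflow may instead draw on the bounded initial history in $\beta(0)$, contribute only bounded values and therefore do not affect the supremum. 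With these observations the recursion and the within-period growth bound combine to give boundedness of $x(t)$.
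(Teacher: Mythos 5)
Your proof is correct in substance, but it is organized differently from the paper's. The paper keeps the delayed inflow intact and splits it as the undelayed term plus a correction $\triangle_i(t)=\sum_{j} R_{ji}\left(z_j(t-\delta_{ji})-z_j(t)\right)$, whose integral over any interval is bounded by a fixed constant $d$ (since delays are at most $\bar{\delta}$ and $z_j \leq c_j$); because $d$ does not shrink with the horizon, the paper must run the drift argument over $N > d/(T\epsilon)$ consecutive cycles: if $x_i(t_0) > NT\bar{c}_i$, the queue stays positive for those $N$ cycles (per-cycle decrease is at most $T\bar{c}_i$), the outflow equals capacity throughout, and the stability condition forces $x_i(t_0+NT)-x_i(t_0) \leq -NT\epsilon + d < 0$. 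You instead bound the delayed outflow by the delayed capacity and exploit that $\int_{kT}^{(k+1)T} c_j(t-\delta_{ji})\,dt = T\bar{c}_j$ exactly, by periodicity of $c_j$ — this eliminates the correction term altogether and yields a clean one-period recursion $x_i((k+1)T) \leq \max\{x_i(kT)-\epsilon T,\, M_i\}$ via your two-case (positive-all-period versus hits-zero) analysis. Your route buys a simpler, per-period Lyapunov-style argument with no multi-cycle threshold; the paper's route localizes all delay effects in one constant $d$, at the price of the $N$-cycle window.

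Two small repairs are needed, neither fatal. First, your constant $B_i = \sup_{t\geq 0}\left(\lambda_i(t) + \sum_{j} R_{ji} c_j(t)\right)$ does not in general bound the delayed rate $\lambda_i(t) + \sum_{j} R_{ji} c_j(t-\delta_{ji})$ when the delays differ across $j$, since the maxima of the summands need not occur simultaneously; replace it by $B_i = \sup_{t} \lambda_i(t) + \sum_{j} R_{ji}\sup_{t} c_j(t)$, which is finite for the same reasons and supports every step where you invoke $B_i$. Second, for cycles with $kT < \bar{\delta}$ the delayed outflows draw on the initial history in $\beta(0)$, which is not required to be dominated by capacity; as you indicate, these finitely many cycles only add a bounded increment, so the recursion should formally be started at the first index $k_0$ with $k_0 T \geq \bar{\delta}$, with $x_i(k_0 T)$ bounded in terms of $x_i(0)$ and the (bounded, piecewise constant) initial history. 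With these adjustments your argument is complete.
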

\begin{proof}
\eqref{eq:flow-dynamics} can be rewritten as $\dot{x}_i=\lambda_i(t) + \sum_{j \in \mc E} R_{ji} z_j(t) - z_i(t) + \triangle_i(t)$, where $\triangle_i(t) = \sum_{j \in \mc E} R_{ji} \left(z_j(t-\delta_{ji}) - z_j(t) \right)$. Therefore, $\int_s^t \triangle_i(r) \, dr = \sum_{j \in \mc E} R_{ji} \left(\int_{s-\delta_{ji}}^s z_j(r) - \int_{t -\delta_{ji}}^t z_j(r) \right) \, dr$. Since $\delta_{ji} \leq \bar{\delta}$ and $z_j(r) \leq c_j(r)$ is bounded, it follows that $|\int_s^t \triangle(r) \, dr| \leq d \onebf$ for some constant $d >0$.
Suppose $x_i(t_0)>NT \bar{c}_i$ for some constant $N > \frac{d}{T \epsilon}$ and $t_0 \geq 0$, where $\epsilon>0$ is from Definition~\ref{def:stability}. Since $x_i(t+T)-x_i(t) \geq -T \bar{c}_i$, we have $x_i(t)>0$ for $t_0 \leq t \leq t_0 + NT$. Therefore, 
$$
x_i(t_0+NT) - x_i(t_0) \leq NT \bar{\lambda}_i + NT \sum_{j \in \mc E} R_{ji} \bar{c}_j - NT \bar{c}_i + d  \leq - NT \epsilon + d < 0
$$
where we use the stability condition from Definition~\ref{def:stability} in the second inequality. 
This is sufficient to show that $x_i(t)$ is bounded, since the queue length increments per cycle are upper bounded. 
\end{proof}
We next state an important result on \emph{contraction} and a global attractivity property of \eqref{eq:flow-dynamics}. 
\begin{proposition}
\label{prop:contraction}
Let the conditions in Proposition~\ref{prop:existence-traffic-dynamics-solution} hold true. If $(x(t),\beta(t))$ and $(\tilde{x}(t),\tilde{\beta}(t))$ denote the trajectories starting from $(x_0,\beta_0)$ and $(\tilde{x}_0,\tilde{\beta}_0)$ respectively, then
\begin{equation}
\label{eq:contraction}
\|x(t)-\tilde{x}(t)\|_1 + \|\beta(t)-\tilde{\beta}(t)\|_1  \leq \|x_0-\tilde{x}_0\|_1 + \|\beta_0 - \tilde{\beta}_0\|_1
\end{equation}
Moreover, if the stability condition in Definition~\ref{def:stability} is satisfied, then
\begin{equation}
\label{eq:convergence}
\lim_{t \to \infty} \|x(t)-\tilde{x}(t)\|_1 = 0, \qquad \lim_{t \to \infty} \|\beta(t) - \tilde{\beta}(t)\|_1 = 0
\end{equation}
\end{proposition}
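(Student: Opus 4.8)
The plan is to establish the non-expansiveness \eqref{eq:contraction} via an $\ell_1$ Lyapunov functional, and then upgrade it to the attractivity \eqref{eq:convergence} by combining monotonicity (Lemma~\ref{lem:monotonicity}), a weighted conservation identity, and the strict stability margin in Definition~\ref{def:stability}.

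For \eqref{eq:contraction} I would take $V(t):=\|x(t)-\tilde{x}(t)\|_1+\|\beta(t)-\tilde\beta(t)\|_1$ and show $V$ is non-increasing. Writing $e_i:=x_i-\tilde x_i$ and $w_i:=z_i-\tilde z_i$, the dynamics \eqref{eq:flow-dynamics} give $\dot e_i=\sum_j R_{ji}w_j(t-\delta_{ji})-w_i(t)$ (the common $\lambda_i$ cancels), while $\tfrac{d}{dt}\|\beta-\tilde\beta\|_1=\sum_{i,j}R_{ji}\big(|w_j(t)|-|w_j(t-\delta_{ji})|\big)$. The single structural fact I need is the \emph{sign property} $\sgn(e_i)\,w_i(t)=|w_i(t)|$: if $e_i>0$ then $x_i>0$, so $z_i=c_i(t)$ by \eqref{eq:z-def} while $\tilde z_i\le c_i(t)$, hence $w_i\ge 0$; the case $e_i<0$ is symmetric, and this holds uniformly even when $\underline{\delta}=0$ since it only uses ``positive queue $\Rightarrow$ at capacity''. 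Matching each delayed term $\sgn(e_i)R_{ji}w_j(t-\delta_{ji})$ against $-R_{ji}|w_j(t-\delta_{ji})|$ (every pair is $\le 0$ because $\sgn(e_i)w_j\le|w_j|$), the surviving terms collapse to $\sum_i(\rho_i-1)|w_i(t)|\le0$, where $\rho_i:=\sum_k R_{ik}\le 1$ is the $i$-th row sum of $R$. Non-smoothness of $\|\cdot\|_1$ is handled through the upper Dini derivative on the finite set of transition instants (Assumption~\ref{ass:piece-wise-constant}), and on intervals where $e_i\equiv0$ with both queues empty one checks $\dot e_i=0$; integrating $\dot V\le0$ a.e.\ then yields \eqref{eq:contraction}.

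For \eqref{eq:convergence} I would first reduce to \emph{ordered} trajectories. Since $\lambda,c$ coincide for both solutions, Lemma~\ref{lem:monotonicity} lets me run the solutions $x^u,x^\ell$ from the componentwise max and min of the two initial conditions and histories; then $x^\ell\le x,\tilde x\le x^u$ and $z^\ell\le z,\tilde z\le z^u$ pointwise, so $\|x-\tilde x\|_1+\|\beta-\tilde\beta\|_1\le D(t):=\sum_i(x_i^u-x_i^\ell)+\|\beta^u-\beta^\ell\|_1$, and it suffices to prove $D(t)\to0$. For the ordered pair set $w_i:=z_i^u-z_i^\ell\ge0$ and, using that the spectral radius of $R^{T}$ is strictly less than one (Remark~\ref{ass:routing-matrix}), take the strictly positive weights $\psi:=(I-R^{T})^{-1}\onebf\ge\onebf$, which satisfy $R^{T}\psi=\psi-\onebf$. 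With the weighted functional $\Psi(t):=\sum_i\psi_i(x_i^u-x_i^\ell)+\sum_{i,j}\psi_i R_{ji}\int_{t-\delta_{ji}}^{t}w_j(s)\,ds$, the delayed terms cancel exactly and the surviving terms reduce, via $R^{T}\psi=\psi-\onebf$, to the clean identity $\dot\Psi(t)=-\sum_i w_i(t)\le0$. Since $D\le\Psi\le(\max_i\psi_i)\,D$, integrating gives $\int_0^\infty\sum_i w_i(s)\,ds=\Psi(0)-\lim_t\Psi(t)<\infty$, so every outflow gap $w_i\in L^1[0,\infty)$.

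It remains to show the gaps vanish, and this is where I expect the main obstacle to lie: the non-strict contraction only guarantees that $D$ converges, leaving open a ``frozen'' positive backlog through which both trajectories carry identical flow. From $w\in L^1$ and $(x_i^u-x_i^\ell)(t)=(x_i^u-x_i^\ell)(0)+\int_0^t\big(\sum_j R_{ji}w_j(s-\delta_{ji})-w_i(s)\big)ds$ each gap converges to some $g_i\ge0$, and the in-transit excess $\sum_{i,j}R_{ji}\int_{t-\delta_{ji}}^{t}w_j\to0$. To force $g_i=0$ I invoke the stability condition to rule out a permanently positive queue: if $x_i^u(t)>0$ on $[t_0,t_0+mT]$ then $z_i^u\equiv c_i$ there, and averaging \eqref{eq:flow-dynamics} with $z_j^u\le c_j$ gives $\tfrac{1}{mT}\big(x_i^u(t_0+mT)-x_i^u(t_0)\big)\le\bar\lambda_i+\sum_j R_{ji}\bar c_j-\bar c_i=\bar\lambda_i-([I-R^{T}]\bar c)_i<-\epsilon$, contradicting $x_i^u\ge0$ for large $m$ (the mechanism already used in Lemma~\ref{lem:boundedness}). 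Hence $x_i^u$ returns to $0$ along some $s_k\to\infty$, at which $0\le x_i^\ell(s_k)\le x_i^u(s_k)=0$, so $(x_i^u-x_i^\ell)(s_k)=0$ and therefore $g_i=0$. Thus $D(t)\to0$, and the sandwich delivers \eqref{eq:convergence}. The delicate points are precisely this exclusion of a frozen gap, which crucially needs $\rho(R^{T})<1$ (through the $L^1$ bound from $\Psi$) rather than mere substochasticity, together with the strict margin $\epsilon$; the $\ell_1$ non-differentiability and the $\mc E_i$ coupling at $\underline{\delta}=0$ are comparatively routine once the sign property is in hand.
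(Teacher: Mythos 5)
Your proposal takes a genuinely different route from the paper's proof, and its architecture is sound, but one step fails as written and must be repaired. For \eqref{eq:contraction}, the paper first reduces to an ordered pair of trajectories via Lemma~\ref{lem:monotonicity} and then runs a vehicle-coloring argument (initial ``red'' vehicles versus later ``black'' arrivals, with black given service priority), adapted from \cite{Muralidharan.Pedarsani.ea:15}; for \eqref{eq:convergence} it compares against the trajectory started from $(0,0)$ and asserts, in one sentence, that stability makes all red vehicles eventually leave. You instead prove \eqref{eq:contraction} by a direct Dini-derivative computation on the $\ell_1$ functional, driven by the sign property ($e_i>0\Rightarrow x_i>0\Rightarrow z_i=c_i(t)\ge\tilde z_i$ via \eqref{eq:z-def}) and row-substochasticity, needing neither monotonicity nor coloring; and you prove \eqref{eq:convergence} by the monotone sandwich plus a weighted conservation functional, an $L^1$ bound on the outflow gaps, and a stability-driven hitting-time argument. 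The latter actually supplies rigor exactly where the paper is tersest, which is a genuine merit of your approach.

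The error is in the weight vector. For the delayed terms in $\dot\Psi$ to cancel and the surviving terms to collapse to $-\sum_i w_i(t)$, the weight must satisfy $R\psi=\psi-\onebf$, i.e., $\psi=(I-R)^{-1}\onebf$, \emph{not} $R^{T}\psi=\psi-\onebf$. Indeed, in $\dot\Psi$ the coefficient of $w_j(t)$ is $\sum_i\psi_iR_{ji}-\psi_j=(R\psi)_j-\psi_j$, because flow is routed from $j$ to $i$ with coefficient $R_{ji}$ and you weighted the in-transit mass by the destination weight $\psi_i$. With your choice $\psi=(I-R^{T})^{-1}\onebf$ the identity is false and the coefficient can even be positive: for two links with $R_{12}=0.9$ and all other entries zero, one gets $\psi_1=1$, $\psi_2=1.9$, and $(R\psi)_1-\psi_1=0.71>0$, so $\Psi$ can increase. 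The repair is mechanical: since $(I-R)^{-1}=\sum_{k\ge0}R^k\ge I$ entrywise, $\psi=(I-R)^{-1}\onebf\ge\onebf>0$ (its $i$-th entry is the expected number of link visits before exiting, starting from $i$), and every subsequent step of your argument goes through verbatim.

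One further step needs a line of care in the contraction half. When $e_i\equiv0$ on an interval but $w_i\neq0$ (both queues empty, unequal inflows), observing $\dot e_i=0$ is not by itself enough: the derivative of $\|\beta-\tilde\beta\|_1$ still contributes $+\rho_i|w_i(t)|$ for that link, and there is no $-|w_i(t)|$ term from $\tfrac{d}{dt}|e_i|$ to cancel it. The absorption works because $\dot e_i=0$ forces $w_i(t)=\sum_jR_{ji}w_j(t-\delta_{ji})$, hence $|w_i(t)|\le\sum_jR_{ji}|w_j(t-\delta_{ji})|$, and this delayed mass is exactly what the $\beta$-derivative subtracts; the per-link contribution is then again at most $(\rho_i-1)|w_i(t)|\le0$. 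With that line added and the weight corrected to $(I-R)^{-1}\onebf$, your proof is complete, and stands as a legitimate analytical alternative to the paper's coloring argument.
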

\begin{proof}
Let $\bar{x}_{0,i}:=\max\{x_{0,i},\tilde{x}_{0,i}\}$ and $\underline{x}_{0,i} :=\min\{x_{0,i},\tilde{x}_{0,i}\}$, for all $i \in \mc E$. Let $\bar{\beta}_0$ and $\underline{\beta}_0$ be defined similarly. 
Therefore, $\|x_0-\tilde{x}_0\|_1 = \sum_{i \in \mc E} |x_{0,i}-\tilde{x}_{0,i}| = \sum_{i \in \mc E} (\bar{x}_{0,i}-\underline{x}_{0,i})=\|\bar{x}_0-\underline{x}_0\|_1$. Similarly, $\|\beta_0-\tilde{\beta}_0\|_1 = \|\bar{\beta}_0-\underline{\beta}_0\|_1$. Let $(\bar{x}(t),\bar{\beta}(t))$ and $(\underline{x}(t),\underline{\beta}(t))$ be the trajectories starting from $(\bar{x}_0,\bar{\beta}_0)$ and $(\underline{x}_0,\underline{\beta}_0)$ respectively. 
Lemma~\ref{lem:monotonicity} then implies that 
$\underline{x}(t) \leq x(t) \leq \bar{x}(t)$, $\underline{x}(t) \leq \tilde{x}(t) \leq \bar{x}(t)$, $\underline{\beta}(t) \leq \beta(t) \leq \bar{\beta}(t)$, and $\underline{\beta}(t) \leq \tilde{\beta}(t) \leq \bar{\beta}(t)$, which then implies that $\|x(t)-\tilde{x}(t)\|_1 \leq \|\bar{x}(t)-\underline{x}(t)\|_1$ and $\|\beta(t)-\tilde{\beta}(t)\|_1 \leq \|\bar{\beta}(t)-\underline{\beta}(t)\|_1$. Therefore, it suffices to show 
\begin{equation}
\label{eq:continuity-monotonicity-equiv-new}
\|\bar{x}(t)-\underline{x}(t)\|_1 + \|\bar{\beta}(t)-\underline{\beta}(t)\|_1 \leq \|\bar{x}_0-\underline{x}_0\|_1+ \|\bar{\beta}_0-\underline{\beta}_0\|_1
\end{equation}
We show \eqref{eq:continuity-monotonicity-equiv-new} using an intuitive argument similar to the one used in \cite[Lemma 2]{Muralidharan.Pedarsani.ea:15}. 
Color the vehicles in the initial state (i.e., $(\bar{x}_0,\bar{\beta}_0)$ and $(\underline{x}_0,\underline{\beta}_0)$) red, and all the vehicles arriving after that as black. Therefore, 
the right hand side in \eqref{eq:continuity-monotonicity-equiv-new} represents the excess red vehicles initially in the system with the larger initial condition. Subsequently, in each queue, there will be black and red vehicles. Change the service discipline in each queue so that all black vehicles are served ahead of every red vehicle. This has two implications. First, since the service times for red vehicles in each queue are the same in each of the two systems, every red vehicle common to both the systems receives identical service in the two systems. That is, more red vehicles depart the system starting from $(\bar{x}_0,\bar{\beta}_0)$ than in the system starting from $(\underline{x}_0,\underline{\beta}_0)$, i.e., $\sum_{i \in \mc E}(\bar{x}_{0,i} + \sum_{j \in \mc E} R_{ji} \int_{-\delta_{ji}}^0 \bar{z}_i(s) \, ds) - \sum_{i \in \mc E} \left( \bar{x}_i^{\text{red}}(t) + \sum_{j \in \mc E} R_{ji} \int_{t-\delta_{ji}}^t \bar{z}_i(s) \, ds \right)\geq \sum_{i \in \mc E} (\underline{x}_{0,i} + \sum_{j \in \mc E} R_{ji} \int_{t-\delta_{ji}}^t \underline{z}_i(s) \, ds) - \sum_{i \in \mc E} \left(\underline{x}_i^{\text{red}}(t) + \sum_{j \in \mc E} R_{ji} \int_{t-\delta_{ji}}^t \underline{z}^{\text{red}}_i(s) \, ds\right)$, i.e., 
\begin{align}
\sum_{i \in \mc E} \left( \bar{x}_i^{\text{red}}(t) -  \underline{x}_i^{\text{red}}(t)\right) + \sum_{i \in \mc E} \sum_{j \in \mc E} R_{ji} \int_{t-\delta_{ji}}^t & \left(\bar{z}_i^{\text{red}}(s) - \underline{z}_i^{\text{red}}(s) \right) \, ds \nonumber \\ 
& \leq \sum_{i \in \mc E} \left( \bar{x}_{0,i} - \underline{x}_{0,i} \right)+ \sum_{i \in \mc E} \sum_{j \in \mc E} R_{ji} \int_{-\delta_{ji}}^0 \left(\bar{z}_i(s) -  \underline{z}_i(s) \right) \, ds \quad \text{i.e., } \nonumber \\
\|\bar{x}^{\text{red}}(t)-\underline{x}^{\text{red}}(t)\|_1 + \|\bar{\beta}^{\text{red}}(t)-\underline{\beta}^{\text{red}}(t)\|_1 & \leq \|\bar{x}_0-\underline{x}_0\|_1+ \|\bar{\beta}_0-\underline{\beta}_0\|_1
\label{eq:red-monotonicity}
\end{align}
Second, service of black vehicles is unaffected by red vehicles in both the systems. Therefore, the  number of black vehicles in each queue, and in particular, the total number of black vehicles in the entire network for both the systems are the same at any time. This combined with \eqref{eq:red-monotonicity} gives \eqref{eq:continuity-monotonicity-equiv-new}.

In order to prove \eqref{eq:convergence}, let $(\hat{x}(t),\hat{\beta}(t))$ be the trajectory starting from the initial condition $(0,0)$. Since $\|x(t)-\tilde{x}(t)\|_1 \leq \|x(t)-\hat{x}(t)\|_1 + \|\tilde{x}(t)-\hat{x}(t)\|_1$ and $\|\beta(t)-\tilde{\beta}(t)\|_1 \leq \|\beta(t)-\hat{\beta}(t)\|_1 + \|\tilde{\beta}(t)-\hat{\beta}(t)\|_1$, it suffices to prove \eqref{eq:convergence} for $(\tilde{x}(0),\tilde{\beta}(0))=(\hat{x}(0),\hat{\beta}(0))=(0,0)$. Using the red vehicle terminology from before, $\|x(t)-\tilde{x}(t)\|_1 + \|\beta(t)-\tilde{\beta}(t)\|_1$ then denotes the number of red vehicles in $(x(t),\beta(t))$. Stability condition implies that all red vehicles eventually leave the network, i.e., $\lim_{t \to \infty} \left( \|x(t) - \tilde{x}(t)\|_1 + \|\beta(t)-\tilde{\beta}(t)\|_1\right) = 0$.
\end{proof}

We can now finish the proof of Theorem~\ref{thm:globally-attractive-existence} as follows. Consider the 
trajectory starting from $\left(x(0), \beta(0)\right)=(0,0)$. In particular, consider the sequence of following points on this trajectory: $\{\left(x(nT), \beta(nT) \right) \}_{n=0}^{\infty}$. Monotonicity (Lemma~\ref{lem:monotonicity}) and boundedness (Lemma~\ref{lem:boundedness}) implies that this sequence converges, say to $(x^*,\beta^*)$. We now establish that the trajectory starting from such a point is periodic. This, together with global attractivity implied by \eqref{eq:convergence}, then establishes Theorem~\ref{thm:globally-attractive-existence}, i.e., every trajectory converges to the periodic trajectory starting from $(x^*,\beta^*)$. 

 If $F(x((n-1)T),\beta((n-1)T))=(x(nT),\beta(nT))$ denotes the associated Poincare map, then the desired periodicity is equivalent to showing $F(x^*,\beta^*)=(x^*,\beta^*)$, i.e., $(x^*(T),\beta^*(T))=(x^*,\beta^*)$, i.e., $\lim_{n \to \infty} (x((n+1)T),\beta((n+1)T))=(x^*,\beta^*)$, i.e., $\lim_{n \to \infty} F(x(nT),\beta(nT))=(x^*,\beta^*)$. 
A sufficient condition for this is continuity of $F$, which follows from \eqref{eq:contraction}.

\subsection{Technical Corollary}
\label{sec:monotonicity}
The following corollary to Lemma~\ref{lem:monotonicity} and Proposition~\ref{prop:contraction} is used in Section~\ref{sec:steady-state-network}.
\begin{corollary}
\label{cor:input-output-monotonicity}
Consider an isolated link $i$ with $T$-periodic capacity function $c_i(t)$.
Let $y_i(t)$ and $y_i'(t)$ be $T$-periodic inflow functions, both satisfying the stability condition in Definition~\ref{def:stability}, and $y_i(t) \leq y'_i(t)$ for all $t \in [0,T]$. If the corresponding steady state $T$-periodic queue lengths are $x^*_i(t)$ and ${x^*}_i'(t)$ respectively, and the steady state $T$-periodic link outflows are $z^*_i(t)$ and ${z^*}'_i(t)$ respectively, then $x^*_i(t) \leq {x^*}'_i(t)$ and $z^*_i(t) \leq {z^*}'_i(t)$ for all $t \in [0,T]$.
\end{corollary}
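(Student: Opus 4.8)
The plan is to realize both steady-state orbits as limits of transient trajectories started from a common initial condition, and then to propagate the monotonicity guaranteed by Lemma~\ref{lem:monotonicity} through to the limit. For an isolated link the inflow $y_i$ plays exactly the role of the external inflow $\lambda_i$ in \eqref{eq:flow-model-main}: there are no upstream links, so $\tilde{\lambda}_i = y_i$ and the departure history $\beta_i$ is vacuous, leaving the single-link dynamics $\dot{x}_i = y_i(t) - z_i(t)$ with $z_i$ given by Proposition~\ref{prop:solution-properties}. Since $y_i$ and $y_i'$ are $T$-periodic and each satisfies the stability condition, Theorem~\ref{thm:globally-attractive-existence} guarantees that $(x_i^*,z_i^*)$ and $({x_i^*}',{z_i^*}')$ are the unique, globally attractive periodic orbits of the two systems, which is what makes the limiting construction legitimate.

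First I would run both systems from the identical initial condition $(x_i(0),\beta_i(0))=(0,0)$, denoting the resulting trajectories by $(x_i,z_i)$ (driven by $y_i$) and $(x_i',z_i')$ (driven by $y_i'$). Because $x_i(0)=x_i'(0)$, the capacity functions coincide, the initial departure histories coincide (both vacuous/zero), and $y_i(t)\leq y_i'(t)$ for all $t$, the hypotheses of Lemma~\ref{lem:monotonicity} are met with $\lambda=y_i$ and $\lambda'=y_i'$. The lemma then yields the transient pointwise ordering $x_i(t)\leq x_i'(t)$ and $z_i(t)\leq z_i'(t)$ for all $t\geq 0$.

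Next I would pass to the limit $t\to\infty$. By the global attractivity established through the convergence \eqref{eq:convergence} in Proposition~\ref{prop:contraction}, and using periodicity $x_i^*(nT+s)=x_i^*(s)$, one has $x_i(nT+s)\to x_i^*(s)$ and $x_i'(nT+s)\to {x_i^*}'(s)$ as $n\to\infty$ for each fixed phase $s\in[0,T)$. Since $x_i(nT+s)\leq x_i'(nT+s)$ for every $n$, the inequality survives the limit, giving $x_i^*(s)\leq {x_i^*}'(s)$ for all $s\in[0,T)$, which is the first assertion of Corollary~\ref{cor:input-output-monotonicity}.

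The main obstacle is making this limiting step rigorous for the \emph{outflow}, since the convergence \eqref{eq:convergence} is in the $\ell_1$ sense over a period rather than pointwise, and $z_i^*$ is only piecewise constant and may be discontinuous. For the queue length itself the difficulty is benign, since $x_i^*$ and ${x_i^*}'$ are continuous (being integrals of bounded integrands), so an a.e.\ subsequential inequality extends to every $s$ by continuity. Rather than fight the discontinuities of $z$ directly, I would recover $z_i^*(s)\leq {z_i^*}'(s)$ from the ordering already proved for $x^*$ together with the explicit structure \eqref{eq:z-star-def}: wherever $x_i^*(s)>0$ one also has ${x_i^*}'(s)\geq x_i^*(s)>0$, so $z_i^*(s)=c_i(s)={z_i^*}'(s)$; wherever $x_i^*(s)=0$ one has $z_i^*(s)=y_i(s)$, which is bounded above either by ${z_i^*}'(s)=y_i'(s)\geq y_i(s)$ (if ${x_i^*}'(s)=0$) or by $c_i(s)={z_i^*}'(s)$ (if ${x_i^*}'(s)>0$, using $y_i(s)\leq c_i(s)$ on the interior of the zero set). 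The only delicate bookkeeping, which I expect to be the hardest part, concerns the finitely many transition instants where a queue switches between zero and positive; these form a measure-zero set and can be dealt with by right-continuity of the piecewise constant data, so the ordering holds throughout $[0,T)$.
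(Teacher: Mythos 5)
Your proposal is correct and takes essentially the same route as the paper's proof: run both systems from a common initial condition, apply Lemma~\ref{lem:monotonicity} to obtain the transient ordering $x_i(t)\leq x_i'(t)$, $z_i(t)\leq z_i'(t)$, and pass to the limit using the global attractivity of the periodic orbits from Theorem~\ref{thm:globally-attractive-existence}. Your extra care in the limiting step --- recovering the pointwise ordering of the outflows from that of the queue lengths via \eqref{eq:z-star-def} rather than from the $\ell_1$-type convergence directly --- fills in a detail that the paper's one-line ``combining these facts'' glosses over, but it is a refinement of, not a departure from, the paper's argument.
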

\begin{proof}
Let $(x(t),z(t))$ and $(x'(t),z'(t))$ be the system trajectories for the two systems, both starting from initial condition $(x_0,\beta_0)$. Lemma~\ref{lem:monotonicity} implies that $x'(t) \geq x(t)$ and $z'(t) \geq z(t)$ for all $t \geq 0$. On the other hand, Theorem~\ref{thm:globally-attractive-existence} implies that $(x(t),z(t))$ and $(x'(t),z'(t))$ converge to $T$-periodic trajectories $(x^*(t),z^*(t))$ and $({x^*}'(t),{z^*}'(t))$ respectively. Combining these facts gives the desired result. 
\end{proof}

\end{document}